\documentclass[preprint,10pt]{amsart}
\usepackage{lineno,hyperref}
\modulolinenumbers[5]

\usepackage{amsfonts,amscd}
\usepackage{graphicx}
\usepackage{amsmath,mathtools,latexsym,amssymb,amsthm}
\newtheorem{Thm}{Theorem}[section]
\newtheorem{Lem}[Thm]{Lemma}
\newtheorem{Prop}[Thm]{Proposition}
\newtheorem{Def}[Thm]{Definition}
\newtheorem{Exm}[Thm]{Example}
\newtheorem{Rem}[Thm]{Remark}

\usepackage{fancyhdr}











\begin{document}


\title[Universal Central Extensions of Hom-Lie-Rinehart algebras ]{Universal Central Extensions and Non-abelian tensor product of Hom-Lie-Rinehart Algebras}

\author{Ashis Mandal and Satyendra Kumar Mishra}
\footnote{The research of S.K. Mishra is supported by CSIR-SPM fellowship grant 2013 .}

\address{Department of Mathematics and Statistics, Indian Institute of Technology,
Kanpur 208016, India.}




\begin{abstract}
In this paper we study universal central extensions and non-abelian tensor product of hom-Lie-Rinehart algebras. We define universal $\alpha$- central extensions, and discuss about lifting of automorphisms and $\alpha$-derivations to central extensions of hom-Lie-Rinehart algebras. This in turn provides such a lifting of automorphisms and $\alpha^k$-derivations to  central extensions in the particular case of hom-Lie algebras. 

\end{abstract}

\keywords{Hom-Lie-Rinehart algebras, Universal central extensions,  Non-abelian tensor product.}

\footnote{AMS Mathematics Subject Classification (2010): $17$A$30,$ $17$B$55$.}


\maketitle
\section{Introduction}
The twisted algebraic structures or hom-algebraic structures ( hom-associative, hom-Lie, hom-Leibniz and so on ) are a topic of growing interest in many branches of Mathematics and Mathematical Physics.  In \cite{HLS06},  J. Hartwig and coauthors introduced the notion of hom-Lie algebras in the context of some particular deformation called  $q$-deformation of Witt and Virasoro algebra of vector fields. They used this new type of algebras to describe the $q$-deformations using  $\sigma$-derivations in place of usual derivations. Subsequently, many concepts and properties have been investigated not only in the context of hom-Lie algebras but also for  hom-algebra structures corresponding to several classical algebras as well. This includes the works of J. Hartwig, D. Larsson, A. Makhlouf, S. Silvestrov, D. Yau and other authors (\cite{AEM11}, \cite{HLS06}, \cite{MS08}, \cite{MS10},\cite{Yau09}). It is important to note here that such investigations are not bound to be straight forward generalisations from their counterparts in the classical algebra case.

The notion of hom-Lie-Rinehart algebra is studied in \cite{MM2018,MM2017}, as a generalisation of hom-Lie algebras and an algebraic analogue of a hom-Lie algebroid (see \cite{LGT13}). A cohomology with coefficients in a left module is introduced and it has been interpreted in lower dimensions to consider the extensions of hom-Lie-Rinehart algebras, which also gives a characterisation of certain abelian and central extensions of hom-Lie algebras. In \cite{MM2017}, a homology of hom-Lie-Rinehart algebras with coefficients in right modules is defined, which generalises the hom-Lie algebra homology, studied in \cite{Yau09}. 

In the case of Lie algebras, the universal central extensions are studied in \cite{Gar80, Kallen}, and a characterisation of kernel of the universal central extension is given in terms of second homology group with trivial coefficients. These results are generalised for the case of Lie-Rinehart algebras in \cite{CGML14}.  On the other hand, in order to proceed for a hom-Lie algebra, in  \cite{CIP15} it is shown that composition of two central extensions may not be a central extension. This observation leads to a new notion of the $\alpha$-central extension of a hom-Lie algebra. The authors proved that if a hom-Lie algebra is perfect, then it has a universal central extension. They also presented an existence of a universal $\alpha$-central extension for a hom-Lie algebra which is $\alpha$-perfect. Thus, it is evident that the classical results in the theory of universal central extensions of Lie algebras do not simply follow for hom-Lie algebras,  because of the absence of the fact that the composition of two central extensions is a central extension (which holds true in the categories of groups, Lie algebras, Leibniz algebras, Lie-Rinehart algebras, etc.). Here, we develop a theory of universal central extensions for hom-Lie-Rinehart algebras and discuss the lifting of automorphisms and $\alpha$-derivations of hom-Lie-Rinehart algebras to central extensions. In this process, one can also deduce such lifting of automorphisms and $\alpha^k$-derivations  (defined in \cite {Sheng12}) to the central extensions of hom-Lie algebras. 
 
Moreover, a non-abelian tensor product of Lie algebra is introduced in \cite{Ellis91} and it is related to universal central extensions of Lie algebras. Later on, this study is extended to Leibniz algebras, Lie-Rinehart algebras, hom-Lie algebras and hom-Leibniz algebras in \cite{Gne99}, \cite{CGML14}, \cite{CKP17}, and \cite{CKP16} respectively. In this paper, we define a non-abelian tensor product for hom-Lie-Rinehart algebras and discuss its properties. We also express universal central extensions and universal $\alpha$- central extensions of hom-Lie-Rinehart algebras in terms of the non-abelian tensor product.   

In section $2$, we recall some basic definitions and terminologies for hom-Lie-Rinehart algebras. Then in section $3$, we discuss about univeral central extensions of a hom-Lie-Rinehart algebra. We also prove that for a perfect hom-Lie-Rinehart algebra there exists a universal central extension. In section $4$,  we consider $\alpha$- central extensions and universal $\alpha$- central extensions of hom-Lie-Rinehart algebras. The lifting of automorphisms and $\alpha$-derivations are discussed in section $5$. Finally, in section $6$, we present a discussion of non-abelian tensor product of hom-Lie-Rinehart algebras. 

\section{Preliminaries on hom-Lie-Rinehart algebras}
In this section, we recall basic definitions concerning hom-algebra structures from the literature (\cite{AEM11},\cite{HLS06}, \cite{MS08}, \cite{MS10}, \cite{Sheng12}) in order to  fix notation and terminology will be needed throughout the paper. Let $R$ denote a commutative ring with unity. We will consider all modules, algebras and their tensor products over such a ring $R$ and all linear maps to be $R$-linear unless otherwise stated. 

A {\bf hom-Lie algebra} is a triplet $(\mathfrak{g},[-,-],\alpha)$ where $\mathfrak{g}$ is an $R$-module equipped with a skew-symmetric $R$-bilinear map $[-,-]:\mathfrak{g}\times \mathfrak{g}\rightarrow \mathfrak{g}$ and a linear map $\alpha:\mathfrak{g}\rightarrow \mathfrak{g}$, satisfying $\alpha[x,y]=[\alpha(x),\alpha(y)]$ such that the hom-Jacobi identity holds:
\begin{equation*}
[\alpha(x),[y,z]]+[\alpha(y),[z,x]]+[\alpha(z),[x,y]]=0 ~~~~\mbox{for all}~~x,y,z\in \mathfrak{g}.
\end{equation*}
Furthermore, if $\alpha$ is an automorphism of the $R$-module $\mathfrak{g}$, then the hom-Lie algebra $(\mathfrak{g},[-,-],\alpha)$ is called a {\bf regular hom-Lie algebra}.

A {\bf representation of a hom-Lie algebra} $(\mathfrak{g},[-,-],\alpha)$ on a $R$-module $V$ is a pair $(\theta, \beta)$ of $R$-linear maps $\theta: \mathfrak{g}\rightarrow \mathfrak{g}l(V)$ and $\beta: V\rightarrow V$ such that 
\begin{align*}
\theta(\alpha(x))\circ \beta&= \beta\circ \theta(x),\\
\theta([x,y])\circ \beta&= \theta(\alpha(x))\circ\theta(y)-\theta(\alpha(y))\circ\theta(x).
\end{align*}
for all $x,y \in \mathfrak{g}$.
Given an associative commutative algebra $A$, an $A$-module $M$ and an algebra endomorphism $\phi: A\rightarrow A$, we call an $R$-linear map $\delta: A\rightarrow M$ {\bf a $\phi$-derivation of $A$ into $M$} if it satisfies the required $\phi$-derivation rule:
$$\delta(a b)=\phi(a) \delta(b)+\phi(b) \delta(a)~~\mbox{for all}~~a,b \in A.$$
 Let us denote by $Der_{\phi}(A)$ the set of all $\phi$-derivations $\delta: A\rightarrow A$.
\subsection{Hom-Lie-Rinehart algebras}
\begin{Def}\label{hom-LR}
A hom-Lie Rinehart algebra over $(A,\phi)$  is defined as a tuple $(A,L,[-,-]_L,\phi,\alpha_L,\rho_L)$,
 where $A$ is an associative commutative algebra, $L$ is an $A$-module, $[-,-]_L: L\times L\rightarrow L$ is a skew symmetric bilinear map, the map $\phi:A\rightarrow A$ is an algebra homomorphism, $\alpha_L :L\rightarrow L $ is a linear map satisfying $\alpha_L([x,y]_L)=[\alpha_L(x),\alpha_L(y)]_L$, and the $R$-linear map $\rho_L: L\rightarrow Der_{\phi}A$  are such that following conditions hold.
\begin{enumerate}
\item The triplet $(L,[-,-]_L,\alpha_L)$ is a hom-Lie algebra.
\item $\alpha_L(a.x)=\phi(a).\alpha_L(x)$ for all $a\in A,~x\in L $.
\item $(\rho_L, \phi)$ is a representation of $(L,[-,-]_L,\alpha_L)$ on $A$.
\item $\rho_L(a.x)=\phi(a).\rho_L(x)$ for all $a\in A,~x\in L $.
\item $[x,a.y]_L=\phi(a)[x,y]_L+ \rho_L(x)(a)\alpha_L(y)$ for all $a\in A,~x,y\in L $.
\end{enumerate}
\end{Def}
A  hom-Lie-Rinehart algebra $(A,L,[-,-]_L,\phi,\alpha_L,\rho_L)$ is said to be {\bf regular} if the map $\phi:A\rightarrow A$ is an algebra automorphism and $\alpha_L:L\rightarrow L $ is a bijective linear map. If we take $\alpha_L=Id_L$ in the above definition, then $\phi=Id_A$ and the hom-Lie-Rinehart algebra $(A,L,[-,-]_L,\phi,\alpha_L,\rho_L)$ is a Lie-Rinehart algebra $L$ over $A$. (See \cite{MM2017}).
\begin{Exm}\label{h-LA}
A  hom-Lie algebra $(L,[-,-]_L,\alpha_L)$ structure over an $R$-module $L$ gives the hom-Lie Rinehart algebra $(A,L,[-,-]_L,\phi,\alpha_L,\rho_L)$ with $A=R$, the algebra morphism $\phi=Id_{R}$ and the trivial action of $L$ on $R$. 
\end{Exm}
\begin{Exm}\label{h-Lie-algebroid}
Any hom-Lie algebroid $(A,\phi,[-,-],\rho,\alpha)$ over a smooth manifold $M$, gives a hom-Lie-Rinehart algebra $(C^{\infty}(M),\Gamma A,[-,-],\phi^*,\alpha,\rho)$ over $(C^{\infty}(M),\phi^*)$, where $\Gamma A$ is the space a sections of the underline vector bundle $A$ over  $M$ and the algebra homomorphism $\phi^*:C^{\infty}(M)\rightarrow C^{\infty}(M)$ is induced by the smooth map $\phi: M\rightarrow M$.
\end{Exm}

\begin{Exm}\label{hom-LR byc}
If we consider a Lie-Rinehart algebra $L$ over $A$ along with an endomorphism $$(\phi,\alpha):(A,L)\rightarrow (A,L)$$ in the category of Lie-Rinehart algebras then the tuple $(A,L,[-,-]_{\alpha}, \phi,\alpha,\rho_{\phi})$ is a hom-Lie-Rinehart algebra, called \textbf{``obtained by composition"}, where
\begin{enumerate}
\item $[x,y]_{\alpha}=\alpha[x,y]$ for $x,y\in L$; 
\item $\rho_{\phi}(x)(a)=\phi(\rho(x)(a))$ for $x\in L, ~a\in A$.
\end{enumerate} 
\end{Exm}

\begin{Exm}\label{Product}
Let $(A,L,[-,-]_L,\phi,\alpha_L,\rho_L)$ and $(A,M,[-,-]_M,\phi,\alpha_M,\rho_M)$ be hom-Lie-Rinehart algebras over $(A,\phi)$. We consider $$L\times_{Der_{\phi}A}M=\{(l,m)\in L\times M : \rho_l(l)=\rho_M(m)\},$$ where $L\times M$ denotes the Cartesian product. Then $(A,L\times_{Der_{\phi}A}M,[-,-],\phi,\alpha,\rho)$ is a hom-Lie-Rinehart algebra, where  
\begin{enumerate}
\item the bracket is given by 
$$[(l_1,m_1),(l_2,m_2)]=([l_1,l_2]_L,[m_1,m_2]_M);$$
\item the endomorphism $\alpha:L\times_{Der_{\phi}A}M\rightarrow L\times_{Der_{\phi}A}M$ is given by
$$\alpha(l,m)=(\alpha_L(l),\alpha_M(m));$$
\item and the anchor map $\rho: L\times_{Der_{\phi}A}M\rightarrow Der_{\phi}A$ is given by
$$\rho(l,m)(a)=\rho_L(a)=\rho_M(a);$$
\end{enumerate}   
for all $l,l_1,l_2\in L$, $m,m_1,m_2\in M$, and $a\in A$. The above  structure gives the categorical product in the category $hLR_A^{\phi}$. 
\end{Exm}

A {\bf subalgebra} of a hom-Lie-Rinehart algebra $(\mathcal{L},\alpha_L)$ over $(A,\phi)$ is a pair $(\mathcal{M},\alpha_{M})$ such that underlying $A$-module $M$ is an $A$-submodule of $L$, restriction of the map $\alpha_L$ on $M$ becomes an endomorphism on $M$ $(\alpha_{M}=\alpha_L{\big|_M})$, and $[m,n]_L\in M$ for $m,n \in M$. The pair $(\mathcal{M},\alpha_{|_M})$ is called a {\bf quasi ideal} in $(\mathcal{L},\alpha)$ if $[m,x]\in M$ for $m\in M$, and $x\in L$. If $\rho_L(m)=0$ for each $m\in M$, then this pair $(\mathcal{M},\alpha_L{\big|_M})$ is called an {\bf ideal} in $(\mathcal{L},\alpha)$. 
\begin{Def}
Let $(A,L,[-,-]_{L},\phi,\alpha_L,\rho_L)$ and $(B,L^{\prime},[-,-]_{L^{\prime}},\psi,\alpha_{L^{\prime}},\rho_{L^{\prime}})$ be hom-Lie-Rinehart algebras, then a {\bf homomorphism }of hom-Lie-Rinehart algebras is defined as a pair of maps $(g,f)$, where the map $g:A\rightarrow B$ is a $R$-algebra homomorphism and $f: L_1\rightarrow L_2$ is a $R$-linear map such that following identities hold:
\begin{enumerate}
\item $f(a.x)=g(a).f(x)~~\mbox{for all}~x\in L_1,~a\in A,$
\item $f[x,y]_L=[f(x),f(y)]_{L^{\prime}} ~~\mbox{for all}~x,y\in L,$
\item $f(\alpha_L(x))=\alpha_{L^{\prime}}(f(x)) ~~\mbox{for all}~x\in L,$
\item $g(\phi(a))=\psi(g(a))~~\mbox{for all}~a\in A$
\item $g(\rho_L(x)(a))=\rho_{L^{\prime}}(f(x))(g(a))~~\mbox{for all}~x\in L,~ a\in A.$
\end{enumerate}  
\end{Def} 
Hom-Lie-Rinehart algebras with homomorphisms form a category  of hom-Lie-Rinehart algebras, which we denote by $hLR$. Note that the category of Lie-Rinehart algebras is a full subcategory of the category of hom-Lie-Rinehart algebras. 

\begin{Rem}
If $A=B$ and $\phi=\psi$, then by taking $g=Id_A$, we get homomorphism of hom-Lie-Rinehart algebras over $(A,\phi)$. Let us denote by $hLR^{\phi}_A$ the category of hom-Lie Rinehart algebras over $(A,\phi)$. To simplify the notations we denote a hom-Lie Rinehart algebra $(A,L,[-,-]_L,\phi,\alpha_L,\rho_L)$ over $(A,\phi)$ by $(\mathcal{L},\alpha_L)$ and similarly for any other hom-Lie Rinehart algebra over $(A,\phi)$, say $(A,L^{\prime},[-,-]_{L^{\prime}},\phi,\alpha_{L^{\prime}},\rho_{L^{\prime}})$, denote it simply by the notation $(\mathcal{L}^{\prime},\alpha_{L^{\prime}})$.
\end{Rem}

\begin{Def}\label{Mod}
Let $M$ be an $A$-module, and $\beta\in End_{R}(M)$. Then the pair $(M,\beta)$ is {\bf a left module} over a hom-Lie Rinehart algebra $(\mathcal{L},\alpha_L)$ if the following holds.
\begin{enumerate}
\item There is a map $\theta:L\otimes M\rightarrow M$, such that the pair  $(\theta,\beta)$ is a representation of the hom-Lie algebra $(L,[-,-]_L,\alpha_L)$ on $M$. Let us denote $\theta(x,m)$ by $\{x,m\}$ for $x\in L,~m\in M$.
\item $\beta(a.m)=\phi(a).\beta(m)$ for all $a\in A~\mbox{and}~m\in M$.
\item $\{a.X,m\}=\phi(a)\{X,m\}$ for all $a\in A,~X\in L,~m\in M$.
\item $\{X,a.m\}=\phi(a)\{X,m\}+\rho_L(X)(a).\beta(m)$ for all $X\in L,~a\in A,~m\in M$.
\end{enumerate} 
\end{Def}
In particular, for $\alpha_L=Id_L$ and $\beta=Id_M$, $(\mathcal{L},\alpha_L)$ is a Lie-Rinehart algebra and $M$ is a left Lie-Rinehart algebra module over the Lie-Rinehart algebra $L$.
\begin{Exm}
The pair $(A,\phi)$ is a canonical left $(\mathcal{L},\alpha_L)$-module, where the left action of $L$ on $A$ is given by the anchor map. 
\end{Exm}
Let $(\mathcal{L},\alpha_L)$ be a hom-Lie Rinehart algebra over $(A,\phi)$ and $(M,\beta)$ be a left module over $(\mathcal{L},\alpha_L)$. We consider the graded $R$-module 
$C^*(L;M):=\oplus_{n\geq 1}C^n(L;M)$
 for hom-Lie-Rinehart algebra $(\mathcal{L},\alpha_L)$ with coefficients in $(M,\beta)$, where $C^n(L;M)\subseteq Hom_R(\wedge_R^n L,M)$ consisting of elements $f\in Hom_R(\wedge_R^n L,M)$ satisfying conditions below.
\begin{enumerate}
\item $f(\alpha_L(x_1),\cdots,\alpha_L(x_n))=\beta(f(x_1,x_2,\cdots,x_n))$ for all $x_i\in L,~1\leq i\leq n$
\item $f(x_1,\cdots,a.x_i,\cdots,x_n)=\phi^{n-1}(a)f(x_1,\cdots,x_i,\cdots,x_n)$ for all $x_i\in L,~1\leq i\leq n,~\mbox{and}~ a\in A$.
\end{enumerate}
Define the $R$-linear maps $\delta:C^n(L;M)\rightarrow C^{n+1}(L;M) $ given by 
\begin{equation*}
\begin{split}
&\delta f(x_1,\cdots,x_{n+1})\\
&:= \sum_{i=1}^{n+1}(-1)^{i+1}\{\alpha_L^{n-1}(x_i),f(x_1,\cdots,\hat{x_i},\cdots,x_{n+1})\}\\&+\sum_{1\leq i<j\leq n+1}f([x_i,x_j]_L,\alpha_L(x_1),\cdots,\hat{\alpha_L(x_i)},\cdots,\hat{\alpha_L(x_j)},\cdots,\alpha_L(x_{n+1}))
\end{split}
\end{equation*}
for all $f\in C^n(L;M),~ x_i\in L, $ and $1\leq i\leq n+1$. Here, $(C^*(L,M),\delta)$ forms a a cochain complex, see \cite{MM2018} for more details. 
\begin{Def}
Let $M$ be an $A$-module and $\beta\in End_{R}(M)$. Then the pair $(M,\beta)$ is {\bf a right module} over a hom-Lie Rinehart algebra $(\mathcal{L},\alpha_L)$ if the following conditions hold.
\begin{enumerate}
\item There is a map $\theta:M\otimes L\rightarrow M$ such that the pair  $(\theta,\beta)$ is a representation of the hom-Lie algebra $(L,[-,-]_L,\alpha_L)$ on $M$, where  $\theta(m,x)$ is usually denoted by $\{m,x\}$ for $x\in L,~m\in M$.
\item $\beta(a.m)=\phi(a).\beta(m)$ for $a \in A$ and $m \in M$.
\item $\{a.m,x\}=\{m,a.x\}=\phi(a).\{m,x\}-\rho_L(x)(a).\beta(m)$ for $a\in A,~x\in L,~m\in M$.
\end{enumerate} 
\end{Def}
If $\alpha=Id_L$ and $\beta =Id_M$, then M is a right Lie-Rinehart algebra module. Note that there is no canonical right module structure on $(A,\phi)$ as one would expect from the case of Lie-Rinehart algebras. 




\section{Universal Central Extensions of Hom-Lie-Rinehart Algebras}
 We recall from \cite{MM2018}, some necessary definitions and results about extensions of a hom-Lie-Rinehart algebra. First note that the category $hLR_A^{\phi}$ does not have zero object. Thus, by a {\bf short exact sequence} in the category $hLR_A^{\phi}$:
$$\begin{CD}
(\mathcal{L}^{\prime\prime},\alpha_{L^{\prime\prime}}) @>i>> (\mathcal{L}^{\prime},\alpha_{L^{\prime}}) @>\sigma>> (\mathcal{L},\alpha_L)@.
\end{CD}$$ 
we mean that the homomorphism $i:(\mathcal{L}^{\prime\prime},\alpha_{L^{\prime\prime}})\rightarrow(\mathcal{L}^{\prime},\alpha_{L^{\prime}})$ is injective, the homomorphism $\sigma:(\mathcal{L}^{\prime},\alpha_{L^{\prime}})\rightarrow (\mathcal{L},\alpha_L)$ is surjective, and $ker (\sigma)=Im (i)$. 
\begin{Def} 
A  short exact sequence in the category $hLR^{\phi}_A$
$$\begin{CD}
(\mathcal{L}^{\prime\prime},\alpha_{L^{\prime\prime}}) @>i>> (\mathcal{L}^{\prime},\alpha_{L^{\prime}}) @>\sigma>> (\mathcal{L},\alpha_L)@.
\end{CD}$$ 
is called an {\bf extension} of the hom-Lie-Rinehart algebra $(\mathcal{L},\alpha)$ by the hom-Lie-Rinehart algebra $(\mathcal{L}^{\prime\prime},\alpha_{L^{\prime\prime}})$. Here, the anchor map of the hom-Lie-Rinehart algebra $(\mathcal{L}^{\prime\prime},\alpha_{L^{\prime\prime}})$ is zero, i.e. $\rho_{L^{\prime\prime}}=0$ since $\sigma\circ i=0$.
\end{Def}
An extension 
$$\begin{CD}
(\mathcal{L}^{\prime\prime},\alpha_{L^{\prime\prime}}) @>i>> (\mathcal{L}^{\prime},\alpha_{L^{\prime}}) @>\sigma>> (\mathcal{L},\alpha_L)@.
\end{CD}$$ 
of a hom-Lie-Rinehart algebra $(\mathcal{L},\alpha_L)$ by a hom-Lie-Rinehart algebra $(\mathcal{L}^{\prime\prime},\alpha_{L^{\prime\prime}})$ is said to be {\bf $A$-split} if we have an $A$-module map $\tau: (\mathcal{L},\alpha_L)\rightarrow (\mathcal{L}^{\prime},\alpha_{L^{\prime}})$ such that
\begin{enumerate}
\item $\sigma \circ \tau= Id_L,$
\item $\tau(a.x)=a.\tau(x)$ for each $a\in A,~x\in L$, and
\item $\tau\circ\alpha_L = \alpha_{L^{\prime}}\circ \tau.$
\end{enumerate}
In this case, we call $\tau$ is a section of the map $\sigma$. Furthermore if the section $\tau$ of the map $\sigma$ is a homomorphism of hom-Lie-Rinehart algebras, then the given extension is said to be {\bf split} in the category of hom-Lie-Rinehart algebras.

First let us observe that any hom-Lie-Rinehart algebra module $(M,\beta)$ gives a hom-Lie-Rinehart algebra $(A,M,[-,-]_M,\phi,\beta,\rho_M) \in hLR^{\phi}_A$, with a trivial bracket and a trivial anchor map. We denote this object in $hLR^{\phi}_A$ by $(\mathcal{M},\beta)$. Now recall an abelian extension of hom-Lie-Rinehart algebras from \cite{MM2018}. 

\begin{Def}
Let $(\mathcal{L},\alpha_L)$ be a hom-Lie-Rinehart algebra over $(A,\phi)$ and $(M,\beta)$ be a left $(\mathcal{L},\alpha_L)$-module. Then a short exact sequence 
$$\begin{CD}
(\mathcal{M},\beta)@>i>> (\mathcal{L}^{\prime},\alpha_{L^{\prime}}) @> \epsilon>>(\mathcal{L},\alpha_L)@. 
\end{CD}$$
in the category $hLR^{\phi}_A$, is called an {\bf abelian extension} of $(\mathcal{L},\alpha_L)$ by $(M,\beta)$ if 
$$[i(m),x]_{L^{\prime}}=i((\epsilon(x)).m)~~\mbox{for all }m\in M, ~x\in L^{\prime}.$$
\end{Def}
In \cite{MM2018}, it is proved that the second cohomology space $H^2_{hLR}(L,M)$ of a hom-Lie-Rinehart algebra $(\mathcal{L},\alpha_L)$ with coefficients in $(M,\beta)$ classifies $A$-split abelian extensions of $(\mathcal{L},\alpha_L)$ by $(M,\beta)$. In particular, the following result generalises the well-known classification theorems for the classical cases of a Lie algebras \cite{HilSta71} and  Lie-Rinehart algebras \cite{CGML14}.  
\begin{Thm}\cite{MM2018}\label{Char1}
 There is a one-to-one correspondence between the equivalence classes of $A$-split abelian extensions of a hom-Lie-Rinehart algebra $(\mathcal{L},\alpha)$ by $(M,\beta)$ and the cohomology classes in $H^2_{hLR}(L,M)$.
\end{Thm}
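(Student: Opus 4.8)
The plan is to establish the correspondence in both directions and then show the two constructions are mutually inverse up to equivalence. First I would recall the standard cocycle construction: given an $A$-split abelian extension $(\mathcal{M},\beta)\xrightarrow{i}(\mathcal{L}',\alpha_{L'})\xrightarrow{\epsilon}(\mathcal{L},\alpha_L)$ with $A$-linear section $\tau$ satisfying $\epsilon\tau=\mathrm{Id}_L$, $\tau(a.x)=a.\tau(x)$, and $\tau\alpha_L=\alpha_{L'}\tau$, one defines $f:L\times L\to M$ by $i(f(x,y))=[\tau(x),\tau(y)]_{L'}-\tau([x,y]_L)$. The element on the right lands in $\mathrm{Im}(i)=\ker(\epsilon)$ because $\epsilon$ is a homomorphism, so $f$ is well-defined since $i$ is injective. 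One then verifies that $f\in C^2(L;M)$, i.e.\ that $f$ satisfies the two defining conditions of a $2$-cochain: the $\alpha$-compatibility $f(\alpha_L(x),\alpha_L(y))=\beta(f(x,y))$ follows from $\tau\alpha_L=\alpha_{L'}\tau$ together with $\alpha_{L'}i=i\beta$ and $\alpha_{L'}[-,-]_{L'}=[\alpha_{L'}-,\alpha_{L'}-]_{L'}$; the $A$-semilinearity $f(a.x,y)=\phi(a)f(x,y)$ (for $n=2$ the factor is $\phi^{n-1}=\phi$) follows from the $A$-linearity of $\tau$ and the module identity $[x,a.y]_{L'}=\phi(a)[x,y]_{L'}+\rho_{L'}(x)(a)\alpha_{L'}(y)$, using that the anchor of $\mathcal{M}$ is zero so the correction term is absorbed on the $M$-side via the left action. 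Finally, the hom-Jacobi identity in $(\mathcal{L}',\alpha_{L'})$ together with condition 2 of Definition~\ref{Mod} (the representation property of $(\theta,\beta)$) forces $\delta f=0$, so $f$ is a $2$-cocycle.

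Next I would check that changing the section $\tau$ changes $f$ by a coboundary: if $\tau'$ is another $A$-linear section compatible with $\alpha$, then $\tau-\tau'$ factors through $i$, say $i\circ h=\tau-\tau'$ for an $A$-linear $h:L\to M$ with $h\alpha_L=\beta h$ (so $h\in C^1(L;M)$), and a direct computation gives $f-f'=\delta h$. Likewise, equivalent extensions yield cohomologous cocycles: an equivalence is a homomorphism $(\mathcal{L}'_1,\alpha)\to(\mathcal{L}'_2,\alpha)$ compatible with the $i$'s and $\epsilon$'s, and composing a section for the first with this map gives a section for the second differing by something factoring through $i$. Conversely, given a cocycle $f\in C^2(L;M)$ representing a class in $H^2_{hLR}(L,M)$, I would build $\mathcal{L}'=M\oplus L$ as an $A$-module with $\alpha_{L'}=\beta\oplus\alpha_L$, anchor $\rho_{L'}(m,x)=\rho_L(x)$, and bracket $[(m,x),(n,y)]_{L'}=\big(\{x,n\}-\{y,m\}+f(x,y),\,[x,y]_L\big)$, then verify all five axioms of Definition~\ref{hom-LR}: skew-symmetry and $\alpha$-multiplicativity are immediate, the two $A$-linearity axioms for $\alpha_{L'}$ and $\rho_{L'}$ are inherited, axiom (5) uses that $f$ is a $2$-cochain (the $\phi(a)$-scaling) plus Definition~\ref{Mod}(4), and the hom-Jacobi identity is exactly the statement $\delta f=0$ combined with the representation axioms for $(\theta,\beta)$. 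With $i(m)=(m,0)$ and $\epsilon(m,x)=x$ one gets an $A$-split abelian extension (section $\tau(x)=(0,x)$), whose associated cocycle via the first construction is $f$ itself.

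The two assignments are thus inverse to each other on the level of classes, giving the bijection; the well-definedness arguments above show it descends to equivalence classes on one side and cohomology classes on the other. The main obstacle is purely computational bookkeeping: the hom-setting inserts powers of $\alpha_L$, $\beta$, and $\phi$ throughout, so the verification that the bracket on $M\oplus L$ satisfies the hom-Jacobi identity (and that $\delta f=0$ is precisely equivalent to it) must be carried out carefully, tracking where $\alpha_L^{n-1}$ appears in the differential $\delta$ against the single $\alpha_{L'}$ twists in the bracket. I expect this hom-Jacobi verification to be the one genuinely delicate step; everything else is a routine adaptation of the Lie–Rinehart argument of \cite{CGML14}, and indeed the theorem is quoted from \cite{MM2018}, so the proof may simply cite that reference for the detailed computation.
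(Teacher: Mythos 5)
Your outline is correct and is exactly the standard argument one expects here: extract a $2$-cocycle from an $A$-linear, $\alpha$-compatible section, show independence of the section and of the representative up to coboundary, and conversely build the extension on $M\oplus L$ with the $f$-twisted bracket; note that this paper does not reprove the theorem but simply cites \cite{MM2018}, where the detailed verification you describe is carried out. The only point to watch is a sign convention: with the paper's abelian-extension axiom $[i(m),x]_{L^{\prime}}=i((\epsilon(x)).m)$, your bracket $(\{x,n\}-\{y,m\}+f(x,y),[x,y]_L)$ gives $[i(m),(n,y)]=(-\{y,m\},0)$, so either the sign of the action terms in the bracket or of $f$ must be adjusted to match that axiom — a bookkeeping fix, not a gap.
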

Let us recall that the center of a hom-Lie-Rinehart algebra $(\mathcal{L},\alpha_L)$ given by
$$Z_A (\mathcal{L})=\{x\in L : [a.x,z]_L=[a.\alpha_L(x),z]_L=0,~\mbox{and} ~x(a)=0~~\mbox{for all}~a\in A,~z\in L \}.$$
It is an ideal of the hom-Lie-Rinehart algebra $(\mathcal{L},\alpha_L)$.
 \begin{Def}
A short exact sequence of hom-Lie-Rinehart algebras 
$$\begin{CD}
(\mathcal{M},\beta)@>i>> (\mathcal{L}^{\prime},\alpha_{L^{\prime}}) @> \sigma>>(\mathcal{L},\alpha_L)
\end{CD}$$
is called a central extension of $(\mathcal{L},\alpha_L)$ if $i(M)=Ker(\sigma)\subset Z_A(\mathcal{L}^{\prime}) $. Here, $(\mathcal{M},\beta)$ is a hom-Lie-Rinehart algebra $(A,M,[-,-]_M,\phi,\beta,\rho_M)$. 
\end{Def}
\begin{Rem}\label{Cent}
\begin{enumerate}
\item Since $\sigma\circ i=0$, we have $\rho_M=0$.
\item Note that $i(M)=Ker(\sigma)\subset Z_A(\mathcal{L}^{\prime})$ implies $(\mathcal{M},\beta)$ is a hom-Lie-Rinehart algebra with trivial bracket. Since $i[m,n]_M=[i(m),i(n)]_{L^\prime}=0$ and $i$ is an injective
map.
\item If $(M,\beta)$ is a trivial hom-Lie-Rinehart algebra module over $(\mathcal{L},\alpha_L)$, then an abelian extension of $(\mathcal{L},\alpha_L)$ by the module $(M,\beta)$ is a central extension.  
\end{enumerate}
\end{Rem}

\begin{Prop}\cite{MM2018}\label{EQC}
There is a one-to-one correspondence between the equivalence classes of $A$-split central extensions
$$\begin{CD}
(\mathcal{M},\beta)@>i>> (\mathcal{L}^{\prime},\alpha_{L^{\prime}}) @> \sigma>>(\mathcal{L},\alpha_L)
\end{CD}$$
of $(\mathcal{L},\alpha_L)$ by $(\mathcal{M},\beta):=(A,M,[-,-]_M,\phi,\beta,\rho_M)$ and the cohomology classes in $H^2_{hLR}(L,M)$, where $(M,\beta)$ is a trivial module over the hom-Lie-Rinehart algebra $(\mathcal{L},\alpha_L)$.
\end{Prop}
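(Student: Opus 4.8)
The plan is to obtain this statement as a specialization of Theorem \ref{Char1}. First I would note that, by Remark \ref{Cent}(3), every $A$-split abelian extension of $(\mathcal{L},\alpha_L)$ by a trivial module $(M,\beta)$ is in particular an $A$-split central extension of $(\mathcal{L},\alpha_L)$ by $(\mathcal{M},\beta) = (A,M,[-,-]_M,\phi,\beta,\rho_M)$. Thus the assignment of Theorem \ref{Char1} sending a cohomology class in $H^2_{hLR}(L,M)$ to an $A$-split abelian extension already produces $A$-split central extensions in the present (trivial-module) setting; the only thing to verify is that the middle term $(\mathcal{L}',\alpha_{L'}) \cong (\mathcal{L},\alpha_L)\oplus(\mathcal{M},\beta)$ built from a representative $2$-cocycle $f$ indeed satisfies $i(M)\subseteq Z_A(\mathcal{L}')$. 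This is a short computation: because the $L$-action on $M$ is trivial and $M$ carries the zero bracket and zero anchor, $[a.i(m),z]_{L'}$, $[a.\alpha_{L'}(i(m)),z]_{L'}$ and the evaluation $\rho_{L'}(i(m))$ all vanish for every $a\in A$, $z\in L'$, $m\in M$.

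For the converse direction I would start from an arbitrary $A$-split central extension
$$\begin{CD} (\mathcal{M},\beta)@>i>> (\mathcal{L}^{\prime},\alpha_{L^{\prime}}) @>\sigma>> (\mathcal{L},\alpha_L) \end{CD}$$
and argue that it is, tautologically, an $A$-split abelian extension of $(\mathcal{L},\alpha_L)$ by the trivial module $(M,\beta)$. Indeed, $i(M) = \mathrm{Ker}(\sigma)\subseteq Z_A(\mathcal{L}')$ forces $[i(m),x]_{L'} = 0$ for all $x\in L'$, while triviality of the module gives $i((\sigma(x)).m) = 0$; hence the abelian-extension identity $[i(m),x]_{L'} = i((\sigma(x)).m)$ holds. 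Moreover, the $A$-module section $\tau$ of $\sigma$ guaranteed by $A$-splitness produces, as in the proof of Theorem \ref{Char1}, a cochain $f$ defined by $i(f(x,y)) = \tau[x,y]_L - [\tau(x),\tau(y)]_{L'}$; one checks that $f\in C^2(L;M)$ — that is, $f(\alpha_L(x),\alpha_L(y)) = \beta(f(x,y))$ and $f(x,a.y) = \phi(a)f(x,y)$ — using that $\tau$ commutes with the structure maps and is $A$-linear, together with axiom (5) of Definition \ref{hom-LR} and the centrality of $i(M)$, and that $\delta f = 0$ follows from the hom-Jacobi identity for $L'$.

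Finally I would verify that an equivalence of $A$-split central extensions (an isomorphism of the middle hom-Lie-Rinehart algebras commuting with $i$ and $\sigma$ and respecting $\alpha_{L'}$) is literally the same notion as equivalence of $A$-split abelian extensions, so that the bijection of Theorem \ref{Char1} restricts to the asserted bijection between equivalence classes of $A$-split central extensions of $(\mathcal{L},\alpha_L)$ by $(\mathcal{M},\beta)$ and cohomology classes in $H^2_{hLR}(L,M)$. The main (and only mildly technical) obstacle is the bookkeeping in the converse step — confirming that the cocycle extracted from a central extension genuinely satisfies the defining conditions of $C^2(L;M)$ and is a $\delta$-cocycle — but this is exactly the computation already carried out for Theorem \ref{Char1}, now with the module action set to zero, so no new ideas are required.
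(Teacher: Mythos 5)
Your proposal is correct and follows exactly the route the paper intends: the paper states this proposition as a citation from \cite{MM2018} immediately after Theorem \ref{Char1} and Remark \ref{Cent}(3), which together say precisely that for a trivial module the notions of $A$-split abelian extension and $A$-split central extension coincide, so the classification by $H^2_{hLR}(L,M)$ carries over verbatim. Your verification of both inclusions (abelian $\Rightarrow$ central via the trivial action, and central $\Rightarrow$ abelian via $i(M)\subseteq Z_A(\mathcal{L}')$ forcing $[i(m),x]_{L'}=0=i(\sigma(x).m)$) is the only content needed beyond Theorem \ref{Char1}, and it is sound.
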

\begin{Def}
A central extension 
$$\begin{CD}
(\mathcal{M},\alpha_M)@>i>> (\mathcal{K},\alpha_K) @> \sigma>>(\mathcal{L},\alpha_L)
\end{CD}$$
is said to be {\bf universal central extension} if for any other central extension
$$\begin{CD}
(\mathcal{M}^{\prime},\alpha_{M^{\prime}})@>j>> (\mathcal{L}^{\prime},\alpha_{L^{\prime}}) @> \tau>>(\mathcal{L},\alpha_L)
\end{CD}$$
there exists a unique homomorphism $h:(\mathcal{K},\alpha_K)\rightarrow (\mathcal{L}^{\prime},\alpha_{L^{\prime}})$ in the category $hLR_A^{\phi}$ such that $\tau\circ h=\sigma$.
\end{Def}
\begin{Thm}\label{Characterisation}
Let  $(\mathcal{M},\alpha_M)\xrightarrow{i} (\mathcal{K},\alpha_K) \xrightarrow{\sigma}(\mathcal{L},\alpha_L)$ be a central extension of a hom-Lie-Rinehart algebra $(\mathcal{L},\alpha_L)$. If every central extension of $(\mathcal{K},\alpha_K)$ splits uniquely in the category $hLR_A^{\phi}$, then the extension $(\mathcal{M},\alpha_M)\xrightarrow{i} (\mathcal{K},\alpha_K) \xrightarrow{\sigma}(\mathcal{L},\alpha_L)$ is a universal central extension.
\end{Thm}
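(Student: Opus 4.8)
The plan is to imitate the classical argument that identifies universal central extensions as those admitting a unique splitting of every central extension over the middle term. Suppose we are given a central extension
$(\mathcal{M}^{\prime},\alpha_{M^{\prime}})\xrightarrow{j} (\mathcal{L}^{\prime},\alpha_{L^{\prime}}) \xrightarrow{\tau}(\mathcal{L},\alpha_L)$
in $hLR_A^{\phi}$. I want to produce a homomorphism $h\colon(\mathcal{K},\alpha_K)\to(\mathcal{L}^{\prime},\alpha_{L^{\prime}})$ with $\tau\circ h=\sigma$ and then prove uniqueness. The standard device is to form the pullback: set
$\mathcal{P}=L^{\prime}\times_{L}K=\{(x^{\prime},k)\in L^{\prime}\times K : \tau(x^{\prime})=\sigma(k)\}$,
equipped with the componentwise bracket, the endomorphism $\alpha_P(x^{\prime},k)=(\alpha_{L^{\prime}}(x^{\prime}),\alpha_K(k))$, and the anchor $\rho_P(x^{\prime},k)=\rho_K(k)$ (which is well defined since $\rho_{L^{\prime}}$ is determined by $\sigma=\tau$ on the fibered part and $\rho_{M^{\prime}}=0$). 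The projection $\pi_2\colon\mathcal{P}\to\mathcal{K}$ is then a central extension of $(\mathcal{K},\alpha_K)$: its kernel is $\{(j(m^{\prime}),0): m^{\prime}\in M^{\prime}\}\cong\mathcal{M}^{\prime}$, which lies in the center of $\mathcal{P}$ because $\mathcal{M}^{\prime}$ is central in $\mathcal{L}^{\prime}$ and the second coordinate is $0$. One has to check carefully that $\mathcal P$ really is a hom-Lie-Rinehart algebra over $(A,\phi)$ and that $\pi_2$ is a morphism in $hLR_A^\phi$; this is the fibered-product construction of Example~\ref{Product} applied in this setting, so the axioms are routine to verify.

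By hypothesis, every central extension of $(\mathcal{K},\alpha_K)$ splits uniquely in $hLR_A^{\phi}$; apply this to $\pi_2\colon\mathcal{P}\to\mathcal{K}$ to obtain a unique homomorphism $s\colon(\mathcal{K},\alpha_K)\to\mathcal{P}$ with $\pi_2\circ s=\mathrm{Id}_{\mathcal{K}}$. Define $h=\pi_1\circ s\colon(\mathcal{K},\alpha_K)\to(\mathcal{L}^{\prime},\alpha_{L^{\prime}})$, where $\pi_1\colon\mathcal{P}\to\mathcal{L}^{\prime}$ is the first projection. Then $h$ is a homomorphism in $hLR_A^\phi$ (a composite of such), and from the defining relation $\tau\circ\pi_1=\sigma\circ\pi_2$ on $\mathcal{P}$ together with $\pi_2\circ s=\mathrm{Id}$ we get $\tau\circ h=\tau\circ\pi_1\circ s=\sigma\circ\pi_2\circ s=\sigma$. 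This establishes existence of the required lift.

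For uniqueness, suppose $h_1,h_2\colon(\mathcal{K},\alpha_K)\to(\mathcal{L}^{\prime},\alpha_{L^{\prime}})$ both satisfy $\tau\circ h_i=\sigma$. Each $h_i$ induces a section $s_i\colon\mathcal{K}\to\mathcal{P}$ of $\pi_2$ by $s_i(k)=(h_i(k),k)$ — this lands in $\mathcal{P}$ precisely because $\tau(h_i(k))=\sigma(k)$, and it is a hom-Lie-Rinehart morphism since $h_i$ and $\mathrm{Id}_K$ are. The uniqueness part of the splitting hypothesis for the central extension $\pi_2\colon\mathcal{P}\to\mathcal{K}$ forces $s_1=s_2$, hence $h_1=\pi_1\circ s_1=\pi_1\circ s_2=h_2$. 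This completes the argument that the given extension is universal.

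The step I expect to be the main obstacle is verifying that the pullback $\mathcal{P}$ is genuinely an object of $hLR_A^{\phi}$ and that $\pi_2$ is a \emph{central} extension — specifically, checking that $\ker(\pi_2)$ sits inside $Z_A(\mathcal{P})$ in the precise sense of the definition (the conditions $[a.x,z]=[a.\alpha_P(x),z]=0$ and $x(a)=0$), and confirming the compatibility axioms (2), (4), (5) of Definition~\ref{hom-LR} relating $\alpha_P$, $\rho_P$, and the $A$-action on the fibered product. The centrality of $\ker(\pi_2)$ reduces to the centrality of $\mathcal{M}^{\prime}=\ker(\tau)$ in $\mathcal{L}^{\prime}$ in the first coordinate and the vanishing of the second coordinate, but one must also be attentive to the fact that $hLR_A^{\phi}$ has no zero object, so "kernel" must be handled via the explicit submodule $j(M^{\prime})\times\{0\}$ rather than abstractly.
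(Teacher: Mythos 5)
Your proposal is correct and follows essentially the same route as the paper: form the pullback of $\tau$ and $\sigma$ (the fibered product of Example~\ref{Product}), observe that the projection onto $(\mathcal{K},\alpha_K)$ is a central extension, use the unique splitting to produce the lift $h$, and deduce uniqueness from uniqueness of the section. Your uniqueness step, showing that any lift $h_i$ yields a section $s_i(k)=(h_i(k),k)$, just makes explicit what the paper leaves implicit.
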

\begin{proof}
Let $(\mathcal{M}^{\prime},\alpha_{M^{\prime}})\xrightarrow{j} (\mathcal{L}^\prime,\alpha_{L^\prime})\xrightarrow{\tau} (\mathcal{L},\alpha_L)$ be a central extension of $(\mathcal{L},\alpha_L)$. We consider a hom-Lie-Rinehart algebra for the pull-back diagram corresponding to the homomorphisms $\sigma:(\mathcal{K},\alpha_K) \rightarrow(\mathcal{L},\alpha_L)$, and $\tau:(\mathcal{L}^\prime,\alpha_{\mathcal{L}^\prime}) \rightarrow(\mathcal{L},\alpha_L)$ as follows: define an $A$-module $P=\{(k,l^\prime): K\times \mathcal{L}^\prime: \sigma(k)=\tau(l^\prime)\}$ and the maps $ \alpha_P(k,l^\prime)= (\alpha_K(k), \alpha_{\mathcal{L}^\prime}(l^\prime)) $; $\rho_P (k,l^\prime)(a)= \rho_L(\sigma(k))(a) = \rho_L(\tau(l^\prime))(a) $. Then $(\mathcal{P},\alpha_P)=(A,P,[-,-]_P,\phi,\alpha_P,\rho_P)$ is a hom-Lie-Rinehart algebra (as the product structure is defined in Example \ref{Product}).

Let the map $\pi_1:P\rightarrow K$ denotes  the projection map onto the first factor $K$. Now the short exact sequence
$  (ker(\pi_1),\alpha_P |_{ker(\pi_1)})\hookrightarrow (\mathcal{P},\alpha_P)\xrightarrow{\pi_1} (\mathcal{K},\alpha_K)$
is a central extension of $(\mathcal{K},\alpha_K)$. So, it splits uniquely. Assume that $s:(\mathcal{K},\alpha_K)\rightarrow (\mathcal{P},\alpha_P)$ is the unique section of $\pi_1$, i.e. $ \pi_1 \circ s=Id_K$, then for any $k\in K$, we have $s(k)=(k, l^\prime)$ for some $l^\prime \in L^\prime$. Thus the section $s$ of the map $ \pi_1$ induces a map $h:K\rightarrow L^\prime$. Since $s$ is a homomorphism in $hLR_A^{\phi}$, it follows that $h:(\mathcal{K},\alpha_K)\rightarrow (\mathcal{L}^\prime,\alpha_{L^\prime})$ is a morphism in $hLR_A^{\phi}$ such that $\beta\circ  h=\sigma$. The uniqueness of the homomorphism $h$ follows from the uniqueness of the section $s:(\mathcal{K},\alpha_K)\rightarrow (\mathcal{P},\alpha_P)$.

\end{proof}

\subsection{Universal central extensions for perfect hom-Lie-Rinehart algebras}
\begin{Def}
A hom-Lie-Rinehart algebra $(\mathcal{L},\alpha_L)$ is said to be {\bf perfect} if $L$ is the $A$-submodule $\{L,L\}$, generated by the elements of the form $[x,y]_L$ for all $x,y\in L$. 
\end{Def}

In particular, if $\alpha_L=Id_L$, then a perfect hom-Lie-Rinehart algebra is simply a perfect Lie-Rinehart algebra defined in \cite{CGML14}.  Moreover, if $A=R$, and $\phi=Id_A$, then a perfect hom-Lie-Rinehart algebra is simply a perfect hom-Lie algebra defined in \cite{CIP15}.
\begin{Rem}
Let $\sigma:(\mathcal{K},\alpha_K) \rightarrow(\mathcal{L},\alpha_L)$ be a surjective morphism in the category $hLR^{\phi}_A$ and $(\mathcal{K},\alpha_K)$ is a perfect hom-Lie-Rinehart algebra. Then $(\mathcal{L},\alpha_L)$ is also a perfect hom-Lie-Rinehart algebra.
\end{Rem}
\begin{Lem}\label{uniq}
Let  $
(\mathcal{M}^{\prime},\alpha_{M^{\prime}})\xrightarrow{j} (\mathcal{L}^{\prime},\alpha_{L^{\prime}}) \xrightarrow{\tau}(\mathcal{L},\alpha_L)$ be a central extension of $(\mathcal{L},\alpha_L)$ and $(\mathcal{K},\alpha_K)$ be a perfect hom-Lie-Rinehart algebra. If there exist hom-Lie-Rinehart algebra homomorphisms $f,g:(\mathcal{K},\alpha_K)\rightarrow (\mathcal{L}^{\prime},\alpha_{L^{\prime}})$ such that $\tau\circ f=\tau\circ g$, then $f=g$.
\end{Lem}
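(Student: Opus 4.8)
The plan is to show that the two homomorphisms $f,g$ agree by exploiting that their difference lands in the central kernel $j(M')$ and that $(\mathcal{K},\alpha_K)$ is generated as an $A$-module by brackets. First I would define $h\colon K\to L'$ by $h(x)=f(x)-g(x)$; since $\tau\circ f=\tau\circ g$, we have $\tau(h(x))=0$, so $h(x)\in\ker(\tau)=j(M')\subseteq Z_A(\mathcal{L}')$ for every $x\in K$. Thus $h$ is a map from $K$ into the center of $\mathcal{L}'$.

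The key computation is then to evaluate $f$ and $g$ on a bracket $[x,y]_K$ and compare. Since $f$ is a hom-Lie-Rinehart homomorphism, $f([x,y]_K)=[f(x),f(y)]_{L'}$, and similarly for $g$. Writing $f(x)=g(x)+h(x)$ and $f(y)=g(y)+h(y)$ and expanding $[f(x),f(y)]_{L'}=[g(x)+h(x),g(y)+h(y)]_{L'}$ by bilinearity, the cross terms $[h(x),g(y)]_{L'}$, $[g(x),h(y)]_{L'}$, and $[h(x),h(y)]_{L'}$ all vanish because $h(x),h(y)\in Z_A(\mathcal{L}')$ (the bracket of a central element with anything is zero). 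Hence $f([x,y]_K)=[g(x),g(y)]_{L'}=g([x,y]_K)$, i.e. $h([x,y]_K)=0$ for all $x,y\in K$.

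It remains to upgrade this from bracket generators to all of $K$. Because $(\mathcal{K},\alpha_K)$ is perfect, every element of $K$ is an $A$-linear combination of elements of the form $[x,y]_K$, so I must check that $h$ is $A$-linear, or at least that $h$ vanishes on $a.[x,y]_K$. This follows since $f,g$ are $A$-module maps (condition $(1)$ of a hom-Lie-Rinehart homomorphism gives $f(a.z)=\phi^{0}(a)?$\,—\,more precisely $f(a.z)=a.f(z)$ as $g=\mathrm{Id}_A$ on the base), so $h(a.z)=a.f(z)-a.g(z)=a.h(z)$, and therefore $h(a.[x,y]_K)=a.h([x,y]_K)=0$. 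Since such elements span $K$ over $A$, we conclude $h\equiv 0$, that is, $f=g$.

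The main obstacle, or rather the only subtle point, is making sure the vanishing of the cross-bracket terms is legitimately justified by membership in $Z_A(\mathcal{L}')$ as defined in the excerpt: the center is defined via $[a.x,z]_L=0$ and $[a.\alpha_L(x),z]_L=0$ for all $a$ and $z$, so one should note that taking $a=1$ (or using $A$-bilinearity of the bracket in the appropriate slot together with condition $(5)$ of Definition~\ref{hom-LR}) indeed forces $[h(x),w]_{L'}=0$ for all $w\in L'$. Everything else is a routine unwinding of the homomorphism axioms and the perfectness hypothesis, so I do not expect any real difficulty beyond bookkeeping.
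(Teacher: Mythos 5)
Your proposal is correct and follows essentially the same route as the paper: both reduce to showing $f$ and $g$ agree on elements $a[x,y]_K$, which span $K$ by perfectness, and both use the centrality of $\ker(\tau)$ to see that $[f(x),f(y)]_{L'}=[g(x),g(y)]_{L'}$ (your explicit expansion via $h=f-g$ is just a spelled-out version of the paper's observation that $\tau(x_1)=\tau(x_2)$, $\tau(y_1)=\tau(y_2)$ forces $[x_1,y_1]=[x_2,y_2]$). The final step via $A$-linearity of $f$ and $g$ matches the paper as well.
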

\begin{proof}
 Since $(\mathcal{K},\alpha_K)$ is a perfect hom-Lie-Rinehart algebra the proof follows if we can show that $g(a[x,y])=f(a[x,y])$ for all $a \in A$ and $x,y \in K$.

Note that $\tau(x_1)=\tau(x_2)$ and $\tau(y_1)=\tau(y_2)$, then $x_1-x_2, y_1-y_2\in ker(\tau)$ and since $ker(\tau)\subset Z_A(\mathcal{L}^{\prime},\alpha_{L^{\prime}})$, it follows that $[x_1,y_1]=[x_2,y_2]$. Now, if we take $x_1=f(x), y_1=f(y)$ and $x_2=g(x), y_2=g(y)$ it follows that $[f(x),f(y)]= [g(x),g(y)]$. Therefore, we have  $g(a[x,y])=a[g(x),g(y)]=a[f(x),f(y)]=f(a[x,y])$. This completes the proof.
\end{proof}

Next, we construct a hom-Lie-Rinehart algebra $(A,\mathfrak{uce}^{\phi}_A L,[-,-],\phi,\tilde{\alpha_L},\tilde{\rho_L})$ for a given hom-Lie-Rinehart algebra $(\mathcal{L},\alpha_L)$ over $(A,\phi)$.  Consequently, this construction gives a functor $\mathfrak{uce}_A^{\phi}:hLR_A^{\phi}\rightarrow hLR_A^{\phi}$,  which associates a universal central extension to a perfect hom-Lie-Rinehart algebra.
 
First we consider an $A$-submodule $M^{\phi}_A L$ of the $A$-module $A\otimes L\otimes L$, which is generated by the elements of the following forms: 
\begin{enumerate}
\item $a\otimes x\otimes x$;
\item $a\otimes x\otimes y+a\otimes y\otimes x$;
\item $a\otimes \alpha_L(x)\otimes [y,z]_L+a\otimes \alpha_L(y)\otimes [z,x]_L+a\otimes \alpha_L(z)\otimes [x,y]_L$; 
\item $\phi(a)\otimes [x,y]_L\otimes [x^{\prime},y^{\prime}]_L+\rho([x,y]_L)(a)\otimes \alpha_L(x^{\prime})\otimes \alpha_L(y^{\prime})-1\otimes [x,y]_L\otimes a[x^{\prime},y^{\prime}]_L$;
\end{enumerate}
where $x,x^{\prime},y,y^{\prime},z\in L,~a\in A$. Let us denote the quotient $A$-module 
$$\mathfrak{uce}^{\phi}_A L:=A\otimes L\otimes L/M^{\phi}_A L$$
and we denote any coset $a\otimes x\otimes y +M^{\phi}_A L$ simply by $(a,x,y)$. From the definition of $\mathfrak{uce}^{\phi}_A L$ the following identities hold. 
\begin{enumerate}
\item $(a,x,y)=-(a,y,x)$;
\item $(a,\alpha_L(x),[y,z]_L)+(a,\alpha_L(y),[z,x]_L)+(a,\alpha_L(z),[x,y]_L)=0$;
\item $(1,[x,y]_L,a[x^{\prime},y^{\prime}]_L)= (\phi(a),[x,y]_L,[x^{\prime},y^{\prime}]_L)+(\rho([x,y]_L)(a),\alpha_L(x^{\prime}),\alpha_L(y^{\prime}))$,
\end{enumerate}
for any $x,x^{\prime},y,y^{\prime},z\in L,~a\in A$. Define an $A$-module homomorphism 
$$\Psi: A\otimes L\otimes L\rightarrow L$$
given by $\Psi(a,x,y)=a[x,y]_L$. Since the map $\Psi$ vanishes on the submodule $M^{\phi}_A L$ it induces an $A$-linear map $u_L:\mathfrak{uce}^{\phi}_A L\rightarrow L$.

The tuple $(A,\mathfrak{uce}^{\phi}_A L,[-,-],\phi,\tilde{\alpha_L},\tilde{\rho_L})$ is a hom-Lie-Rinehart algebra over $(A,\phi)$.
\begin{itemize}
\item The bracket $[-,-]$ is defined as
\begin{equation*}
\begin{split}
&[(a_1,x_1,y_1),(a_2,x_2,y_2)]\\
= &(\phi(a_1a_2),[x_1,y_1]_L,[x_2,y_2]_L)
+(\phi(a_1).[x_1,y_1]_L(a_2),\alpha_L(x_2),\alpha_L(y_2))\\
&-(\phi(a_2).[x_2,y_2]_L(a_1),\alpha_L(x_1),\alpha_L(y_1));
\end{split}
\end{equation*}
\item The endomorphism $\tilde{\alpha_L}:\mathfrak{uce}^{\phi}_A L \rightarrow \mathfrak{uce}^{\phi}_A L$ is defined as 
$$\tilde{\alpha_L}(a,x,y)=(\phi(a),\alpha_L(x),\alpha_L(y));$$
\item The anchor $\tilde{\rho_L}:\mathfrak{uce}^{\phi}_A L\rightarrow Der_{\phi}A$ is defined as 
$$\tilde{\rho_L}(a,x,y)(b)=\phi(a)\rho_L([x,y]_L)(b),$$
\end{itemize} 
for any $x_1,x_2,x,y_1,y_2,y\in L,~a,b,a_1,a_2\in A$. Let us denote 
$$\mathfrak{uce}^{\phi}_A(\mathcal{L},\alpha_L):=(A,\mathfrak{uce}^{\phi}_A L,[-,-],\phi,\tilde{\alpha_L},\tilde{\rho_L}).$$
Then the previously defined map $u_L: \mathfrak{uce}^{\phi}_A(\mathcal{L},\alpha_L)\rightarrow (\mathcal{L},\alpha_{L})$ is a morphism in the category $hLR_A^{\phi}$. 
 
Let $f:(\mathcal{L},\alpha_L)\rightarrow (\mathcal{M},\alpha_M)$ be an arbitrary morphism in the category $hLR^A_{\phi}$. Define a map
 $\mathfrak{uce}_A^{\phi}(f):\mathfrak{uce}_A^{\phi}(L)\rightarrow \mathfrak{uce}_A^{\phi}(M)$ as follows:
 $$\mathfrak{uce}_A^{\phi}(f)(a,x,y)=(a,f(x),f(y))$$
for $a\in A$, and $x,y\in L$. Then from the definition we get that 
the map $\mathfrak{uce}_A^{\phi}(f)$
is a morphism in the category $hLR_A^{\phi}$ and we have the following commutative diagram:
$$\begin{CD}
\mathfrak{uce}^{\phi}_A(\mathcal{L},{\alpha_L})@>\mathfrak{uce}_A^{\phi}(f)>> \mathfrak{uce}_A^{\phi}(\mathcal{M},\alpha_M)\\
  @V u_L VV  @V u_M VV \\
(\mathcal{L},\alpha_L)@>f>> (\mathcal{M},\alpha_M)
\end{CD}$$ 
 This in turn implies that $\mathfrak{uce}_A^{\phi}:hLR_A^{\phi}\rightarrow hLR_A^{\phi}$ is a functor. 
 
Let $\{L,L\}$ denotes the $A$-submodule of $L$ generated by elements of the form: $[x,y]_L$ for $x,y\in L$. Then we get a hom-Lie-Rinehart subalgebra $(\{\mathcal{L},\mathcal{L}\},\alpha_{\{L,L\}})$ of $(\mathcal{L},\alpha_L)$ by restricting the hom-Lie bracket, the anchor map, and the endomorphism $\alpha_L$ on the $A$-submodule $\{L,L\}$ (here $\alpha_{\{L,L\}}$ denotes the restriction of $\alpha_L$ on $\{L,L\}$.). Then, it easily follows that $u_L : \mathfrak{uce}^{\phi}_A(\mathcal{L},\alpha_L)\rightarrow (\{\mathcal{L},\mathcal{L}\},\alpha_{[L,L]})$ is a surjective homomorphism and it gives a central extension of the hom-Lie-Rinehart algebra $(\{\mathcal{L},\mathcal{L}\},\alpha_{[L,L]})$. 

\begin{Thm}\label{Existence 1}
Let $(\mathcal{L},\alpha_L)$ be a perfect hom-Lie-Rinehart algebra. Then the short exact sequence:
$$(\mathcal{P},\alpha_P)\rightarrow \mathfrak{uce}^{\phi}_A(\mathcal{L},{\alpha_L})\xrightarrow{u_L} (\mathcal{L},\alpha_L)$$
is a universal central extension of $(\mathcal{L},\alpha_L)$, where the underlying $A$-module in $(\mathcal{P},\alpha_P)$ is $P=Ker(u_L)$, and the map $\alpha_P$ is restriction of $\tilde{\alpha_L}$ on $Ker(u)$.

\begin{proof}
Let $(\mathcal{N},\alpha_N)\xrightarrow{i} (\mathcal{M}^{\prime},\alpha_{M^{\prime}}) \xrightarrow{\sigma}(\mathcal{L},\alpha_L)$ be a central extension of $(L,\alpha_L)$. Let $s:L\rightarrow M^{\prime}$ be a map such that $ \sigma \circ s= Id_L$
, then we have the following observations.
\begin{enumerate}
\item $s(km+n)-ks(m)-s(n)\in Ker(\sigma)\subset Z_A(\mathcal{M}^{\prime}),$
\item $s[m,n]_L-[s(m),s(n)]_{M^{\prime}}\in Ker(\sigma)\subset Z_A(\mathcal{M}^{\prime}),$
\item $s(a.m)-a.s(m)\in Ker(\sigma)\subset Z_A(\mathcal{M}^{\prime}),$
\item $s(\alpha_L(m))-\alpha_M^{\prime}(s(m))\in Ker(\sigma)\subset Z_A(\mathcal{M}^{\prime}),$
\item $s(m)(a)=\sigma(s(m))(a)=m(a),$
\end{enumerate}
for any $m,n\in L,~a\in A,$ and $k\in R$. Let us define a map $\psi:A\times L\times L\rightarrow M^{\prime}$ by
 $$\psi(a,x,y)=a[s(x),s(y)]_{M^{\prime}}$$
for all $x,y\in L$. By the above observations the map $\psi$ extends to a hom-Lie-Rinehart algebra homomorphism
 $$\tau:\mathfrak{uce}^{\phi}_A(\mathcal{ L},{\alpha_L})\rightarrow(\mathcal{M}^{\prime},\alpha_{M^{\prime}}).$$
Next, by using the Lemma \ref{uniq} we get that the map $\tau$ is unique on the subalgebra $\{\mathfrak{uce}^{\phi}_A L,\mathfrak{uce}^{\phi}_A L\}$. Since $(\mathcal{L},\alpha_L)$ is a perfect hom-Lie-Rinehart algebra, the hom-Lie-Rinehart algebra $\mathfrak{uce}^{\phi}_A(\mathcal{L},\alpha_L)$ is also perfect, i.e. $\{\mathfrak{uce}^{\phi}_A L,\mathfrak{uce}^{\phi}_A L\}=\mathfrak{uce}^{\phi}_A L$. Therefore, there exists a unique homomorphism $\tau:\mathfrak{uce}^{\phi}_A(\mathcal{L},\alpha_L)\rightarrow(\mathcal{M}^{\prime},\alpha_{M^{\prime}})$ such that $\sigma\circ \tau=u_L$. This completes the proof.
\end{proof}

\end{Thm}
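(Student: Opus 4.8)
The plan is to verify that the constructed sequence $(\mathcal{P},\alpha_P)\rightarrow \mathfrak{uce}^{\phi}_A(\mathcal{L},{\alpha_L})\xrightarrow{u_L} (\mathcal{L},\alpha_L)$ is first a central extension, and then that it enjoys the universal property. For centrality, since $(\mathcal{L},\alpha_L)$ is perfect we have $L=\{L,L\}$, so $u_L$ is onto $L$ (not merely $\{L,L\}$), and one must check $P=\mathrm{Ker}(u_L)\subset Z_A(\mathfrak{uce}^{\phi}_A L)$. This follows from the defining relations of $\mathfrak{uce}^{\phi}_A L$: for $\xi\in P$ and any generator $(a_2,x_2,y_2)$, the bracket formula for $\mathfrak{uce}^{\phi}_A L$ expresses $[\xi,(a_2,x_2,y_2)]$ in terms of the image $u_L(\xi)=0$, hence the bracket vanishes; similarly $\tilde\rho_L(\xi)$ factors through $u_L(\xi)$, so $\xi$ acts trivially on $A$. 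The checks needed for $\xi$ and $a.\xi$ and $a.\tilde\alpha_L(\xi)$ are of the same nature. I would state this as the first paragraph of the proof.

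Next I would address the universal property. Given an arbitrary central extension $(\mathcal{N},\alpha_N)\xrightarrow{i}(\mathcal{M}^{\prime},\alpha_{M^{\prime}})\xrightarrow{\sigma}(\mathcal{L},\alpha_L)$, choose any $A$-module section $s:L\rightarrow M^{\prime}$ with $\sigma\circ s=\mathrm{Id}_L$ (using that $\sigma$ is $A$-linear and surjective; if necessary choosing $s$ compatible with $\alpha$ up to the center, which is exactly observation (4)). Record the five ``defect in the center'' observations (1)--(5) exactly as displayed. Then define $\psi:A\times L\times L\rightarrow M'$ by $\psi(a,x,y)=a\,[s(x),s(y)]_{M'}$ and, crucially, check that $\psi$ extends $A$-trilinearly to $A\otimes L\otimes L$ and then \emph{kills the submodule} $M^{\phi}_A L$: the relations of types (1),(2),(3) go to zero because $[s(x),s(x)]=0$, skew-symmetry, and the hom-Jacobi identity in $M'$ combined with the fact that $s$ is a section only up to center (so $s(\alpha_L x)$ may be replaced by $\alpha_{M'}(s(x))$ inside a bracket with a central defect, which dies under the second slot of another bracket); the type-(4) relation goes to zero using observation (3) about the failure of $s$ to be $A$-linear together with the $\phi$-derivation rule for $\rho$ and the bracket identity (5) in the definition of a hom-Lie-Rinehart algebra. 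This produces an $A$-linear map $\mathfrak{uce}^{\phi}_A L\to M'$, and one checks it is a hom-Lie-Rinehart morphism $\tau$ (commutes with brackets, with $\tilde\alpha_L$ vs $\alpha_{M'}$, and intertwines anchors) and satisfies $\sigma\circ\tau=u_L$ on generators, hence everywhere.

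Finally, for uniqueness: suppose $\tau_1,\tau_2:\mathfrak{uce}^{\phi}_A(\mathcal{L},\alpha_L)\to(\mathcal{M}^{\prime},\alpha_{M^{\prime}})$ both satisfy $\sigma\circ\tau_i=u_L$. Then $\sigma\circ\tau_1=\sigma\circ\tau_2$, and since $\mathfrak{uce}^{\phi}_A(\mathcal{L},\alpha_L)$ is perfect (as the image of the perfect... more precisely, because $(\mathcal{L},\alpha_L)$ is perfect the submodule $\{\mathfrak{uce}^{\phi}_A L,\mathfrak{uce}^{\phi}_A L\}$ equals $\mathfrak{uce}^{\phi}_A L$, which one verifies directly from the type-(3) and type-(4) relations expressing every generator $(a,x,y)$ as an $A$-combination of brackets), Lemma \ref{uniq} applies with $(\mathcal{K},\alpha_K)=\mathfrak{uce}^{\phi}_A(\mathcal{L},\alpha_L)$ and forces $\tau_1=\tau_2$.

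I expect the main obstacle to be the verification that $\psi$ annihilates the fourth family of generators of $M^{\phi}_A L$ — the relation mixing $\phi(a)$, the anchor term $\rho([x,y]_L)(a)$, and the element $1\otimes[x,y]_L\otimes a[x',y']_L$. Making this vanish requires combining, in the right order, the non-$A$-linearity defect of the section $s$ (observation (3)), the $\phi$-derivation property of $\rho_L(\,\cdot\,)$, the hom-Lie-Rinehart axiom $[x,a.y]_L=\phi(a)[x,y]_L+\rho_L(x)(a)\alpha_L(y)$ transported to $M'$, and the fact that central-valued error terms are absorbed because they sit in a bracket slot or are hit by $\sigma$. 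The well-definedness of $\tilde\alpha_L$ and the claim $\{\mathfrak{uce}^{\phi}_A L,\mathfrak{uce}^{\phi}_A L\}=\mathfrak{uce}^{\phi}_A L$ for perfect $L$ are the other places where care is needed, but these are more mechanical once the generator relations are written out.
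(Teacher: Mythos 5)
Your proposal is correct and follows essentially the same route as the paper: choose a set-theoretic/$A$-linear section $s$ of the given central extension, record the five central-defect observations, define $\psi(a,x,y)=a[s(x),s(y)]_{M'}$ and check it descends through $M^{\phi}_A L$ to a morphism $\tau$ with $\sigma\circ\tau=u_L$, then obtain uniqueness from Lemma \ref{uniq} together with the perfectness of $\mathfrak{uce}^{\phi}_A(\mathcal{L},\alpha_L)$ inherited from that of $(\mathcal{L},\alpha_L)$. The only difference is that you spell out the centrality of $\mathrm{Ker}(u_L)$ and the annihilation of the fourth family of relations, which the paper asserts without detail.
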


\section{Universal $\alpha$-central extensions of hom-Lie-Rinehart algebras}
In this section, we assume that $A$ is an associative commutative unital $R$-algebra and $\phi:A\rightarrow A$ is an algebra epimorphism. We define $\alpha$-perfect hom-Lie-Rinehart algebras and prove that for an $\alpha$-perfect hom-Lie-Rinehart algebra, the functor $\mathfrak{uce}_A^{\phi}$ associates a universal $\alpha$-central extension. 
\begin{Def}
An extension of a hom-Lie-Rinehart algebra $(\mathcal{L},\alpha_L)$
$$\begin{CD}
(\mathcal{M},\alpha_M)@>i>> (\mathcal{L}^{\prime},\alpha_{L^{\prime}}) @> \sigma>>(\mathcal{L},\alpha_L)
\end{CD}$$
is called {\bf  $\alpha$-central extension} of $(\mathcal{L},\alpha_L)$ if $i(\alpha_M(M))\subset Z_A(\mathcal{L}^{\prime},\alpha_{L^{\prime}})$.  
\end{Def}

\begin{Rem}\label{Comp1}
Any central extension of a hom-Lie-Rinehart algebra is an $\alpha$-central extension, but the converse may not be true. In the classical cases of Lie algebras, Leibniz algebras, Lie-Rinehart algebras and other classical algebras, the composition of two central extensions is again a central extension. This property does not hold for hom-Lie-Rinehart algebras. In fact, it does not even hold in the case of hom-Lie algebras, see Example 4.9 in \cite{CIP15}. In the following Lemma, we prove that composition of two central extensions in $hLR_A^{\phi}$, is an $\alpha$-central extension. 
\end{Rem}
\begin{Lem}\label{Comp}
 Suppose $(\mathcal{M},\alpha_M)\xrightarrow{i} (\mathcal{K},\alpha_K) \xrightarrow{\sigma}(\mathcal{L},\alpha_L)$
is a central extension where $(\mathcal{K},\alpha_K)$ is a perfect hom-Lie-Rinehart algebra.Then for a central extension of $(\mathcal{K},\alpha_K)$ given by $
(\mathcal{N},\alpha_N)\xrightarrow{j} (\mathcal{L}^{\prime},\alpha_{L^{\prime}}) \xrightarrow{\tau}(\mathcal{K},\alpha_K) $  the composition 
$$(\mathcal{N}^{\prime},\alpha_{N^{\prime}})\xrightarrow{j^{\prime}} (\mathcal{L}^{\prime},\alpha_{L^{\prime}}) \xrightarrow{\sigma\circ\tau}(\mathcal{L},\alpha_L)
$$ is an $\alpha$-central extension.
\end{Lem}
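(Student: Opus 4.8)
The plan is to take an element $n' \in N'$, where $N' = \ker(\sigma \circ \tau)$, and show that $j'(\alpha_{N'}(n'))$ lies in the center $Z_A(\mathcal{L}', \alpha_{L'})$. First I would analyze the structure of $N' = \ker(\sigma \circ \tau)$: an element $x \in L'$ lies in $N'$ iff $\tau(x) \in \ker(\sigma) = i(M)$. So $\tau$ maps $N'$ into $i(M)$, and $\ker(\tau|_{N'}) = \ker(\tau) = j(N)$. Thus we have an extension $j(N) \hookrightarrow N' \twoheadrightarrow i(M)$ (modulo identifying along the injections), i.e.\ $N'$ sits between the two central kernels.

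The key computation is to show $\alpha_{L'}(N') \subseteq Z_A(\mathcal{L}', \alpha_{L'})$. Take $x \in N'$, so $\tau(x) = i(m)$ for some $m \in M$. For any $z \in L'$ and $a \in A$, I want $[a.\alpha_{L'}(x), z]_{L'} = 0$, $[a.\alpha_{L'}^2(x), z]_{L'} = 0$, and $\alpha_{L'}(x)(a) = 0$ (the anchor condition). The anchor condition is immediate: the anchor of $\mathcal{L}'$ factors through $\tau$ followed by the anchor of $\mathcal{K}$ (since $\tau$ is a morphism and $N = \ker\tau$ has trivial anchor), and $\rho_K(\tau(\alpha_{L'}(x))) = \rho_K(\alpha_K(i(m))) = 0$ because $i(M) = \ker\sigma \subseteq Z_A(\mathcal{K})$ is an ideal (anchor vanishes on it), and $\alpha_K$ preserves it. For the bracket conditions, apply $\tau$: $\tau([a.\alpha_{L'}(x), z]_{L'}) = [a.\alpha_K(\tau(x)), \tau(z)]_K = [a.\alpha_K(i(m)), \tau(z)]_K = [i(\phi(a).\alpha_M(m)), \tau(z)]_K$. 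Since $i(\alpha_M(M)) \subseteq Z_A(\mathcal{K})$ (as $i(M) = \ker\sigma$ is even central, so certainly $\alpha$-central), this bracket vanishes. Hence $[a.\alpha_{L'}(x), z]_{L'} \in \ker\tau = j(N) \subseteq Z_A(\mathcal{L}')$.

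Now here is where perfectness of $\mathcal{K}$ enters and this is the main obstacle: knowing $[a.\alpha_{L'}(x), z]_{L'} \in Z_A(\mathcal{L}')$ is not yet $[a.\alpha_{L'}(x), z]_{L'} = 0$. To upgrade this, I would use that $\mathcal{K}$ perfect forces $\mathcal{L}'$ to be ``perfect up to the center'' — more precisely, every $z \in L'$ can be written (modulo $\ker\tau \subseteq Z_A(\mathcal{L}')$ and $A$-linear combinations) in terms of brackets, since $\tau$ is surjective and $K = \{K,K\}$. Then expand $[a.\alpha_{L'}(x), [z_1,z_2]_{L'}]_{L'}$ using the hom-Jacobi identity, which rewrites it in terms of $[\alpha_{L'}(\text{something central}), \ldots]$ — brackets with center elements vanish — so the whole expression collapses to $0$. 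One must be slightly careful because the hom-Jacobi identity involves $\alpha_{L'}$ applied to the outer terms; the relation $\alpha_{L'}(a.\alpha_{L'}(x)) = \phi(a).\alpha_{L'}^2(x)$ and the fact that $\alpha_{L'}^2(x)$ is also in $N'$ (since $N'$ is $\alpha_{L'}$-invariant, being the kernel of a morphism) keep everything inside the relevant centralizing subspace. This Jacobi-identity bookkeeping, done twice (once for $\alpha_{L'}(x)$ and once for $\alpha_{L'}^2(x)$ to get both defining conditions of $Z_A$), is the technical heart.

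Finally I would record that $j'\colon (\mathcal{N}', \alpha_{N'}) \hookrightarrow (\mathcal{L}', \alpha_{L'})$ with $N' = \ker(\sigma\circ\tau)$ and $\alpha_{N'} = \alpha_{L'}|_{N'}$ does give a short exact sequence in $hLR_A^\phi$ (injectivity and $\mathrm{Im}\, j' = \ker(\sigma\circ\tau)$ are clear, and $N'$ is an $A$-submodule closed under $\alpha_{L'}$ and with trivial bracket and trivial anchor, hence a legitimate object of the form $(\mathcal{N}', \alpha_{N'})$), and by the computation above $j'(\alpha_{N'}(N')) \subseteq Z_A(\mathcal{L}', \alpha_{L'})$, which is exactly the definition of an $\alpha$-central extension. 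I expect the statement and exact sequence formalities to be routine; the genuine work is the Jacobi-identity argument turning ``bracket lands in the center'' into ``bracket is zero'' using perfectness of $\mathcal{K}$.
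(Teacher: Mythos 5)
Your proposal is correct and follows essentially the same route as the paper's proof: use perfectness of $(\mathcal{K},\alpha_K)$ and surjectivity of $\tau$ to write any $Y\in L^{\prime}$ as $\sum_i a_i[Y_{i_1},Y_{i_2}]+\eta$ with $\eta\in Ker(\tau)\subset Z_A(\mathcal{L}^{\prime})$, then kill the brackets via the hom-Jacobi identity together with centrality of $Ker(\tau)$ in $\mathcal{L}^{\prime}$ and of $Ker(\sigma)$ in $\mathcal{K}$, the anchor condition being immediate. The only detail worth tightening is the rewriting that makes hom-Jacobi applicable: what is used is $a.\alpha_{L^{\prime}}(x)=\alpha_{L^{\prime}}(a^{\prime}.x)$ with $\phi(a^{\prime})=a$, available because $\phi$ is assumed to be an epimorphism in this section (the paper invokes exactly this), rather than the identity $\alpha_{L^{\prime}}(a.\alpha_{L^{\prime}}(x))=\phi(a).\alpha_{L^{\prime}}^{2}(x)$ you cite.
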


\begin{proof}
Here, the hom-Lie-Rinehart algebra $(\mathcal{K},\alpha_K)$ is perfect. So, any element $X\in K$ can be written as follws:
 $$X=\sum_i a_i[X_{i_1},X_{i_2}]$$ for some $a_i\in A$ and $X_{i_1},X_{i_2}\in K$. Assume $Y\in {L}^{\prime}$, then $\tau(Y)=\sum_i a_i[k_{i_1},k_{i_2}]$ for some $k_{i_1},k_{i_2}\in K$ and $a_i\in A$. Since $\tau$ is surjective, we have $\tau(Y)=\sum_i a_i\tau[Y_{i_1},Y_{i_2}]$. So, $Y-\sum_i a_i[Y_{i_1},Y_{i_2}]\in Ker(\tau)$. So, we can write any $Y\in L^{\prime}$ as \begin{equation}\label{Y}
Y=\sum_i a_i[Y_{i_1},Y_{i_2}]+\eta,
\end{equation}
for some $\eta\in Ker(\tau)$. Now, using the expression \eqref{Y} for $Y\in L^{\prime}$, $Ker(\tau)\subset Z_A(\mathcal{L}^{\prime},\alpha_{L^{\prime}})$ and the hom-Jacobi identity, we have
$$[a.\alpha_{N^{\prime}}(n),Y]=[\alpha_{N^{\prime}}(a^{\prime}.n),Y]=0$$
for each $n\in N^{\prime}=Ker(\sigma\circ\tau)$, and $a\in A$ (since $\phi$ is surjective there exists $a^{\prime}\in A$ such that $\phi(a^{\prime})=a$). Moreover, for each $n\in N^{\prime }$, and $a\in A$, we have $n(a)=0$. Therefore, $\alpha_{N^{\prime}}(Ker(\sigma\circ \tau))\subset Z_A(\mathcal{L}^{\prime})$, i.e
the composition  
$$(\mathcal{N}^{\prime},\alpha_{N^{\prime}})\xrightarrow{j^{\prime}} (\mathcal{L}^{\prime},\alpha_{L^{\prime}}) \xrightarrow{\sigma\circ\tau}(\mathcal{L},\alpha_L)$$ 
is an $\alpha$-central extension.

\end{proof}

\begin{Def}
A hom-Lie-Rinehart algebra $(\mathcal{L},\alpha_L)$ over $(A,\phi)$ is called {\bf $\alpha$-perfect} if $L=\{\alpha_L(L),\alpha_L(L)\}$.   
\end{Def}

Note that any $\alpha$-perfect hom-Lie-Rinehart algebra $(\mathcal{L},\alpha_L)$ is also perfect. Moreover, since $\phi: A\rightarrow A$ is an algebra epimorphism, the map $\alpha_L$ is surjective. 

In particular, if $A=R$, and the map $\phi=Id_A$, then any $\alpha$-perfect hom-Lie-Rinehart algebra $(\mathcal{L},\alpha_L)$ is just an $\alpha$-perfect hom-Lie algebra. 
\begin{Lem}\label{uniq2}
Let $(\mathcal{K},\alpha_K)$ be an $\alpha$-perfect hom-Lie-Rinehart algebra and the short exact sequence:
$$\begin{CD}
(\mathcal{M}^{\prime},\alpha_{M^{\prime}})@>j>> (\mathcal{L}^{\prime},\alpha_{L^{\prime}}) @> \tau>>(\mathcal{L},\alpha_L)
\end{CD}$$ 
be an $\alpha$-central extension of $(\mathcal{L},\alpha_L)$. If there exist hom-Lie-Rinehart algebra homomorphisms $f,g:(\mathcal{K},\alpha_K)\rightarrow (\mathcal{L}^{\prime},\alpha_{L^{\prime}})$ such that $\tau\circ f=\tau\circ g$, then $f=g$.
\end{Lem}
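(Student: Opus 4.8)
The plan is to mimic the proof of Lemma~\ref{uniq}, but with the weaker perfectness hypothesis ($\alpha$-perfect instead of perfect) compensated by the weaker centrality hypothesis ($\alpha$-central instead of central). Since $(\mathcal{K},\alpha_K)$ is $\alpha$-perfect, every element of $K$ is an $A$-linear combination of brackets of the form $[\alpha_K(x),\alpha_K(y)]$ with $x,y\in K$. Because $f$ and $g$ are hom-Lie-Rinehart algebra homomorphisms, they commute with the structure endomorphisms and are $A$-linear, so it suffices to show $f\big(a[\alpha_K(x),\alpha_K(y)]\big)=g\big(a[\alpha_K(x),\alpha_K(y)]\big)$ for all $a\in A$ and $x,y\in K$, and by $A$-linearity of both maps this reduces to showing $[f(\alpha_K(x)),f(\alpha_K(y))]_{L^{\prime}}=[g(\alpha_K(x)),g(\alpha_K(y))]_{L^{\prime}}$.

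Next I would exploit the condition $\tau\circ f=\tau\circ g$: for each $x\in K$ the element $f(x)-g(x)$ lies in $\operatorname{Ker}(\tau)=j(M^{\prime})$. Applying the homomorphism property, $f(\alpha_K(x))-g(\alpha_K(x))=\alpha_{L^{\prime}}(f(x))-\alpha_{L^{\prime}}(g(x))=\alpha_{L^{\prime}}(f(x)-g(x))$, which therefore lies in $\alpha_{L^{\prime}}(\operatorname{Ker}(\tau))=\alpha_{L^{\prime}}(j(M^{\prime}))=j(\alpha_{M^{\prime}}(M^{\prime}))$. Here is the key point: the extension is $\alpha$-central, so $j(\alpha_{M^{\prime}}(M^{\prime}))\subset Z_A(\mathcal{L}^{\prime},\alpha_{L^{\prime}})$, and in particular the bracket of such an element with anything in $L^{\prime}$ vanishes. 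Writing $f(\alpha_K(x))=g(\alpha_K(x))+z_x$ and $f(\alpha_K(y))=g(\alpha_K(y))+z_y$ with $z_x,z_y\in Z_A(\mathcal{L}^{\prime},\alpha_{L^{\prime}})$, bilinearity of the bracket gives
$$[f(\alpha_K(x)),f(\alpha_K(y))]_{L^{\prime}}=[g(\alpha_K(x)),g(\alpha_K(y))]_{L^{\prime}}+[z_x,g(\alpha_K(y))]_{L^{\prime}}+[g(\alpha_K(x)),z_y]_{L^{\prime}}+[z_x,z_y]_{L^{\prime}},$$
and the last three terms all vanish because $z_x,z_y$ are central. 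This yields the desired equality and hence $f=g$.

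The main obstacle is making precise that the central elements $z_x$ really do annihilate under bracketing against arbitrary elements of $L^{\prime}$ — the definition of $Z_A(\mathcal{L}^{\prime})$ records that $[a.z,w]_{L^{\prime}}=[a.\alpha_{L^{\prime}}(z),w]_{L^{\prime}}=0$ and $z(a)=0$, so one must check that the relevant bracket in the argument (with $a=1$) is covered, and that $\alpha$-centrality is exactly the hypothesis needed to land $z_x$ in this set after applying $\alpha_{L^{\prime}}$; this is precisely why $\alpha$-perfectness of $(\mathcal{K},\alpha_K)$ is required, since it forces every generator to carry an $\alpha_K$ (equivalently, the difference $f-g$ to be evaluated only on elements in the image of $\alpha_K$) so that the centralizing happens after applying $\alpha_{L^{\prime}}$. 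The rest is the routine bookkeeping of $A$-linearity and compatibility with $\alpha_K$, exactly as in Lemma~\ref{uniq}.
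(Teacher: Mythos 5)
Your proposal is correct and is exactly the adaptation the paper intends: the paper simply says the proof is "similar to Lemma \ref{uniq}", and your argument carries out that adaptation, using $\alpha$-perfectness to reduce to generators $a[\alpha_K(x),\alpha_K(y)]$ and $\alpha$-centrality (via $\alpha_{L'}\circ j=j\circ\alpha_{M'}$) to place the differences $f(\alpha_K(x))-g(\alpha_K(x))$ in $Z_A(\mathcal{L}')$, after which bilinearity of the bracket finishes the proof just as in Lemma \ref{uniq}.
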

\begin{proof}
The proof is similar to the proof of Lemma \ref{uniq}
\end{proof}
\begin{Def}
A central extension 
$$\begin{CD}
(\mathcal{M},\alpha_M)@>i>> (\mathcal{K},\alpha_K) @> \sigma>>(\mathcal{L},\alpha_L)
\end{CD}$$
is called a {\bf universal $\alpha$-central extension} if for every $\alpha$-central extension 
$$\begin{CD}
(\mathcal{M}^{\prime},\alpha_{M^{\prime}})@>j>> (\mathcal{L}^{\prime},\alpha_{L^{\prime}}) @> \tau>>(\mathcal{L},\alpha_L)
\end{CD}$$
there exists a unique homomorphism $h:(\mathcal{K},\alpha_K)\rightarrow (\mathcal{L}^{\prime},\alpha^{\prime}) $, in the category $hLR_A^{\phi}$  such that $\tau\circ h=\sigma$.
\end{Def}
Any central extension of a hom-Lie-Rinehart algebra is an $\alpha$-central extension.
So, every universal $\alpha$-central extension of a hom-Lie-Rinehart algebra is a universal central extension.

\begin{Thm}\label{splitting}
Let $(\mathcal{M},\alpha_M)\xrightarrow{i} (\mathcal{K},\alpha_K) \xrightarrow{\sigma}(\mathcal{L},\alpha_L)$ be a central extension  of a hom-Lie-Rinehart algebra $(\mathcal{L},\alpha_L)$. If the central extension is a universal $\alpha$-central extension, then the hom-Lie-Rinehart algebra $(\mathcal{K},\alpha_K)$ is perfect and every central extension of $(\mathcal{K},\alpha_K)$ splits uniquely in the category $hLR_A^{\phi}$. 
\end{Thm}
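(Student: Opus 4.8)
The plan is to establish the two assertions in turn: first that $(\mathcal{K},\alpha_K)$ is perfect, and then, using perfectness, that every central extension of $(\mathcal{K},\alpha_K)$ splits uniquely in $hLR_A^\phi$. The second part will follow almost formally from Lemmas~\ref{Comp} and~\ref{uniq}; the substance lies in the first part, which I would prove by contradiction, adapting the classical ``$\mathfrak g\oplus\mathfrak g/[\mathfrak g,\mathfrak g]$'' construction to the hom-Lie-Rinehart setting.

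\emph{Perfectness of $(\mathcal{K},\alpha_K)$.} Suppose $\{K,K\}\subsetneq K$. Since $\alpha_K$ carries $\{K,K\}$ into itself, it descends to an $R$-linear endomorphism $\bar\alpha_K$ of the nonzero $A$-module $\bar K:=K/\{K,K\}$, and I would consider $\mathcal{V}:=(A,\bar K,[-,-]=0,\phi,\bar\alpha_K,\rho=0)\in hLR_A^\phi$. The key observation — the one that neutralizes the anchor — is that $\mathcal{V}$ is a \emph{trivial} left $(\mathcal{K},\alpha_K)$-module in the sense of Definition~\ref{Mod}: among the module axioms only $\{X,a.\bar m\}=\phi(a)\{X,\bar m\}+\rho_K(X)(a)\bar\alpha_K(\bar m)$ is not immediate, and it holds because condition~(5) of Definition~\ref{hom-LR} for $(\mathcal{K},\alpha_K)$ gives $\rho_K(X)(a)\alpha_K(m)=[X,a.m]-\phi(a)[X,m]\in\{K,K\}$, so $\rho_K(X)(a)\bar\alpha_K(\bar m)=0$ in $\bar K$. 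One then forms the semidirect product $(\mathcal{L}',\alpha_{L'}):=(\mathcal{K},\alpha_K)\ltimes\mathcal{V}$, with underlying $A$-module $K\oplus\bar K$, bracket $[(k_1,v_1),(k_2,v_2)]=([k_1,k_2],0)$, anchor $\rho_{L'}(k,v)=\rho_K(k)$ and $\alpha_{L'}(k,v)=(\alpha_K(k),\bar\alpha_K(v))$; verifying the axioms of Definition~\ref{hom-LR} is routine, the only delicate one being~(5), and it closes because its $\bar K$-component is again of the form $\rho_K(k_1)(a)\bar\alpha_K(v_2)=0$. As the semidirect product by a trivial module, $(\mathcal{L}',\alpha_{L'})\xrightarrow{\mathrm{pr}_1}(\mathcal{K},\alpha_K)$ is a central extension, and composing with $\sigma$ one checks that $(\mathcal{L}',\alpha_{L'})\xrightarrow{\sigma\circ\mathrm{pr}_1}(\mathcal{L},\alpha_L)$ is central too: its kernel is $\mathrm{Ker}(\sigma)\oplus\bar K$, the summand $\bar K$ is central because its bracket and anchor are trivial, and $\mathrm{Ker}(\sigma)$ is central because $\mathrm{Ker}(\sigma)\subset Z_A(\mathcal{K})$, $\rho_K$ vanishes on it, and it is $\alpha_K$-stable — here one uses that $\phi$ is an epimorphism. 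Finally, with $q\colon K\to\bar K$ the quotient map, the maps $h_1(k)=(k,0)$ and $h_2(k)=(k,q(k))$ are two morphisms in $hLR_A^\phi$ lifting $\sigma$ along $\sigma\circ\mathrm{pr}_1$: both are $A$-linear, both respect the bracket because $q$ kills $\{K,K\}$, and $h_2$ respects $\alpha$ precisely because $\bar\alpha_K\circ q=q\circ\alpha_K$. They are distinct since $q\neq 0$. Because a central extension is in particular an $\alpha$-central extension, this contradicts the uniqueness clause in the definition of universal $\alpha$-central extension. Hence $(\mathcal{K},\alpha_K)$ is perfect.

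\emph{Unique splitting of central extensions of $(\mathcal{K},\alpha_K)$.} Let $(\mathcal{N},\alpha_N)\xrightarrow{j}(\mathcal{L}',\alpha_{L'})\xrightarrow{\tau}(\mathcal{K},\alpha_K)$ be a central extension. Since $(\mathcal{K},\alpha_K)$ is now known to be perfect, Lemma~\ref{Comp} shows that the composite $(\mathcal{L}',\alpha_{L'})\xrightarrow{\sigma\circ\tau}(\mathcal{L},\alpha_L)$ is an $\alpha$-central extension, so the universal $\alpha$-central property of $\sigma$ produces a unique morphism $h\colon(\mathcal{K},\alpha_K)\to(\mathcal{L}',\alpha_{L'})$ with $(\sigma\circ\tau)\circ h=\sigma$. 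Then $\tau\circ h$ and $\mathrm{Id}_K$ are morphisms $(\mathcal{K},\alpha_K)\to(\mathcal{K},\alpha_K)$ with $\sigma\circ(\tau\circ h)=\sigma=\sigma\circ\mathrm{Id}_K$, so Lemma~\ref{uniq} (perfect $(\mathcal{K},\alpha_K)$, central $\sigma$) forces $\tau\circ h=\mathrm{Id}_K$; thus $h$ is a section of $\tau$, and since it is a homomorphism of hom-Lie-Rinehart algebras the central extension splits in $hLR_A^\phi$. If $s_1,s_2$ are two such sections, then $\tau\circ s_1=\mathrm{Id}_K=\tau\circ s_2$, and Lemma~\ref{uniq}, applied now to the central extension $\tau$ and the perfect algebra $(\mathcal{K},\alpha_K)$, gives $s_1=s_2$; the splitting is therefore unique.

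The step I expect to demand the most care is the construction in the perfectness part: unlike in the category of hom-Lie algebras, $hLR_A^\phi$ has no naive biproduct — the anchor map obstructs simply adjoining $\bar K$ to $\mathcal{K}$ as a direct summand — so one must genuinely check that $(\mathcal{K},\alpha_K)\ltimes\mathcal{V}$ is an object of $hLR_A^\phi$ and that $\sigma\circ\mathrm{pr}_1$ is a central (not merely $\alpha$-central) extension. Both reduce to the one identity $\rho_K(X)(a)\alpha_K(m)\in\{K,K\}$; once that is in hand the remaining verifications are purely computational.
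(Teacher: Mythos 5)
Your proof is correct and takes essentially the same route as the paper's: the same construction of the hom-Lie-Rinehart algebra on $K\times K/\{K,K\}$ with the two lifts $k\mapsto(k,0)$ and $k\mapsto(k,k+\{K,K\})$ to force perfectness, and the same combination of Lemma \ref{Comp}, the universal $\alpha$-central property, and Lemma \ref{uniq} for the unique splitting. Your explicit check that $\rho_K(X)(a)\alpha_K(m)\in\{K,K\}$ (which is what makes axiom (5) of Definition \ref{hom-LR} hold for the quotient construction and makes the composed extension genuinely central) is a detail the paper leaves implicit.
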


\begin{proof}
Let us assume that $(\mathcal{K},\alpha_K)$ is not perfect. Now consider the product $K^{\prime}=K\times K/\{K,K\}$ as an $A$-module. Then, $(A,K^{\prime},[-,-]_{K^{\prime}},\phi,\alpha_{K^{\prime}},\rho_{K^{\prime}})$ is a hom-Lie-Rinehart algebra, where 
\begin{enumerate}
\item $[(x,y+\{K,K\}),(x^{\prime},y^{\prime}+\{K,K\})]_K^{\prime}=([x,x^{\prime}],0);$ 
\item $\alpha_{K^{\prime}}(x,y+\{K,K\})=(\alpha_K(x),\alpha_K(y)+\{K,K\});$
\item $\rho_{K^{\prime}}(x,y+\{K,K\})(a)=\rho_K(x)(a).$
\end{enumerate} 

Let us consider the central extension $(\mathcal{M^{\prime}},\alpha_{M^{\prime}})\xrightarrow{i} (\mathcal{K^{\prime}},\alpha_{K^{\prime}}) \xrightarrow{\tilde{\sigma}}(\mathcal{L},\alpha_L)$ where $K^{\prime}=K\times K/\{K,K\}$, $\tilde{\sigma}:K^{\prime}\rightarrow M$ is given by $\tilde{\sigma}(x,y+\{K,K\})=\sigma(x)$ for $x,y\in K$ and $M^{\prime}=Ker(\tilde{\alpha})$. Now let us define the following maps:
\begin{enumerate}
\item $f:K\rightarrow K\times \{K,K\}$ given by $f(x)=(x,0)$ for all $x\in K$;
\item $g:K\rightarrow K\times \{K,K\}$ given by $g(x)=(x,x+\{K,K\})$ for all $x\in K$.
\end{enumerate}
Then $\tilde{\sigma}\circ f=\tilde{\sigma}\circ g=\sigma$. Since $(\mathcal{M},\alpha_M)\xrightarrow{i} (\mathcal{K},\alpha_K) \xrightarrow{\sigma}(\mathcal{L},\alpha_L)$ is a universal $\alpha$-central extension, it is also a universal central extension. Therefore, by universal property $f=g$, i.e. $K/\{K,K\}=0$. So, $(\mathcal{K},\alpha_K)$ is a perfect hom-Lie-Rinehart algebra.

Let $(\mathcal{N},\alpha_N)\xrightarrow{j} (\mathcal{L^{\prime}},\alpha_{L^{\prime}}) \xrightarrow{\tau}(\mathcal{K},\alpha_K)$  be a central extension of $(\mathcal{K},\alpha_K)$. By Lemma \ref{Comp}, $(\mathcal{P},\alpha_P)\xrightarrow{k} (\mathcal{L^{\prime}},\alpha_{L^{\prime}}) \xrightarrow{\sigma\circ \tau}(\mathcal{L},\alpha_L)$ is an $\alpha$-central extension, where $P=Ker(\sigma\circ\tau)$. Since $(\mathcal{M},\alpha_M)\xrightarrow{i} (\mathcal{K},\alpha_K) \xrightarrow{\sigma}(\mathcal{L},\alpha_L)$ is a universal $\alpha$-central extension, we have a unique homomorphism $\gamma:(\mathcal{K},\alpha_K)\rightarrow (\mathcal{L^{\prime}},\alpha_{L^{\prime}})$ such that $\sigma\circ(\tau\circ\gamma)=\sigma $. Since $(\mathcal{K},\alpha_K)$ is perfect, by using Lemma \ref{uniq}, we get $\tau\circ\gamma=Id_K$. Thus, every central extension of $(\mathcal{K},\alpha_K)$ splits uniquely in $hLR_A^{\phi}$.
\end{proof}

Now, we prove that the functor $\mathfrak{uce}_A^{\phi}:hLR_A^{\phi}\rightarrow hLR_A^{\phi}$ associates a universal $\alpha$-central extension to an $\alpha-$perfect hom-Lie-Rinehart algebra. In particular we have the following theorem:

\begin{Thm}
Let $(\mathcal{L},\alpha_L)$ be an $\alpha$-perfect hom-Lie-Rinehart algebra. Then 
$$ (\mathcal{P},\alpha_P)\rightarrow \mathfrak{uce}^{\phi}_A (\mathcal{L}, \alpha_L)\xrightarrow{u_L} (\mathcal{L},\alpha_L)$$
is a universal $\alpha$-central extension of the hom-Lie-Rinehart algebra $(\mathcal{L},\alpha_L)$.

\begin{proof}
Note that $(\mathfrak{uce}^{\phi}_A L,\tilde{\alpha_L})$ is also an $\alpha$-perfect hom-Lie-Rinehart algebra. Let $$ 
(\mathcal{N},\alpha_N)\xrightarrow{i} (\mathcal{M}^{\prime},\alpha_{M^{\prime}}) \xrightarrow{\sigma}(\mathcal{L},\alpha_L)  
$$ be an $\alpha$-central extension of $(L,\alpha_L)$, i.e. $\alpha_{M^{\prime}}(ker(\sigma))\subset Z_A(\mathcal{M}^{\prime},\alpha_{M^{\prime}})$. Let $s:L\rightarrow M^{\prime}$ be a map such that $\sigma \circ s=Id_L$, then we
define a map $\psi:A\times L\times L\rightarrow M^{\prime}$ given by
 $$\psi(a,\alpha_L(x),\alpha_L(y))=a[\alpha_{M^{\prime}}(s(x)),\alpha_{M^{\prime}}(s(y))]_{M^{\prime}}$$
for all $x,y\in L$. Following the arguments as given in  the proof of Theorem \ref{Existence 1}, it is clear that $\psi$ extends to a hom-Lie-Rinehart algebra homomorphism
 $$\tau:\mathfrak{uce}^{\phi}_A(\mathcal{ L},\alpha_L)\rightarrow(\mathcal{M}^{\prime},\alpha_{M^{\prime}}).$$
Next, by using Lemma \ref{uniq2}, the map $\tilde{f}$ is a unique homomorphism such that $\sigma\circ \tau=u_L$.
\end{proof}
\end{Thm}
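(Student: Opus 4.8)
The goal is to show that for an $\alpha$-perfect hom-Lie-Rinehart algebra $(\mathcal{L},\alpha_L)$, the central extension $u_L:\mathfrak{uce}^{\phi}_A(\mathcal{L},\alpha_L)\to(\mathcal{L},\alpha_L)$ is \emph{universal $\alpha$-central}. The argument has two pieces: first, verify that $u_L$ really is a central extension whose total space is $\alpha$-perfect; second, establish the universal property against an arbitrary $\alpha$-central extension $(\mathcal{N},\alpha_N)\xrightarrow{i}(\mathcal{M}',\alpha_{M'})\xrightarrow{\sigma}(\mathcal{L},\alpha_L)$. For the first piece I would note that, by construction, $u_L$ gives a central extension of $(\{\mathcal{L},\mathcal{L}\},\alpha_{[L,L]})$ (this is recorded just before Theorem \ref{Existence 1}), and $\alpha$-perfectness of $(\mathcal{L},\alpha_L)$ forces $\{\mathcal{L},\mathcal{L}\}=L$, so $u_L$ is onto $(\mathcal{L},\alpha_L)$. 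The identity $\tilde{\alpha_L}(a,x,y)=(\phi(a),\alpha_L(x),\alpha_L(y))$ together with the defining relation (3) of $\mathfrak{uce}^{\phi}_A L$ shows that every element of $\tilde{\alpha_L}(\mathfrak{uce}^{\phi}_A L)$ is an $A$-combination of brackets $[(1,x_1,y_1),(1,x_2,y_2)]$ modulo $\operatorname{Ker}(u_L)\subset Z_A$, so $\mathfrak{uce}^{\phi}_A(\mathcal{L},\alpha_L)$ is again $\alpha$-perfect; in particular it is perfect.

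\medskip

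\textbf{The existence of the comparison map.} Given the $\alpha$-central extension $\sigma:(\mathcal{M}',\alpha_{M'})\to(\mathcal{L},\alpha_L)$, pick an $A$-linear, $\alpha$-compatible set-theoretic section $s:L\to M'$ with $\sigma\circ s=\mathrm{Id}_L$ (such a section exists because the extension is $A$-split; if one wants to avoid that hypothesis, an $R$-linear section suffices and the bilinearity corrections land in $\operatorname{Ker}(\sigma)$). Define $\psi:A\times L\times L\to M'$ by
$$\psi(a,\alpha_L(x),\alpha_L(y))=a\,[\alpha_{M'}(s(x)),\alpha_{M'}(s(y))]_{M'}.$$
The key point — exactly as in the proof of Theorem \ref{Existence 1} — is that $\psi$ annihilates all four families of generators of the submodule $M^{\phi}_A L$. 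For the skew-symmetry generators this is immediate; for the hom-Jacobi generators one uses the hom-Jacobi identity in $M'$ applied to $\alpha_{M'}(s(x)),\alpha_{M'}(s(y)),\alpha_{M'}(s(z))$, the fact that $s$ is $\alpha$-compatible up to $\operatorname{Ker}(\sigma)$, and that $\operatorname{Ker}(\sigma)$ is central after one application of $\alpha_{M'}$ (this is precisely where the $\alpha$-central hypothesis, rather than central, is used — the correction terms carry an extra $\alpha_{M'}$); for the fourth family one uses the $\phi$-derivation rule for $\rho_{M'}$ and relation (5) of Definition \ref{hom-LR} in $M'$. Hence $\psi$ descends to an $A$-linear map $\mathfrak{uce}^{\phi}_A L\to M'$, and checking that it respects the bracket, the anchor and the endomorphism $\tilde{\alpha_L}\mapsto\alpha_{M'}$ — again modulo central terms killed by $\alpha_{M'}$ — shows it is a morphism $\tau:\mathfrak{uce}^{\phi}_A(\mathcal{L},\alpha_L)\to(\mathcal{M}',\alpha_{M'})$ in $hLR_A^{\phi}$ with $\sigma\circ\tau=u_L$.

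\medskip

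\textbf{Uniqueness.} Since $\mathfrak{uce}^{\phi}_A(\mathcal{L},\alpha_L)$ is $\alpha$-perfect and $\sigma$ is an $\alpha$-central extension, Lemma \ref{uniq2} applies verbatim: any two morphisms $\tau_1,\tau_2:\mathfrak{uce}^{\phi}_A(\mathcal{L},\alpha_L)\to(\mathcal{M}',\alpha_{M'})$ with $\sigma\circ\tau_1=\sigma\circ\tau_2$ must agree on $\{\alpha_L(\mathfrak{uce}^{\phi}_A L),\alpha_L(\mathfrak{uce}^{\phi}_A L)\}=\mathfrak{uce}^{\phi}_A L$, hence coincide. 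This gives the required uniqueness and completes the proof.

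\medskip

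\textbf{Main obstacle.} The only genuinely delicate step is the well-definedness of $\psi$ on the fourth family of generators, namely
$$\phi(a)\otimes[x,y]_L\otimes[x',y']_L+\rho([x,y]_L)(a)\otimes\alpha_L(x')\otimes\alpha_L(y')-1\otimes[x,y]_L\otimes a[x',y']_L.$$
Here one must reconcile the $\phi$-twisted Leibniz rule in $M'$ with the fact that $s$ is only a section up to central (not merely $\alpha$-central) corrections when the inner slots are genuine brackets $[x,y]_L$; because $[s(x),s(y)]_{M'}$ differs from $s[x,y]_L$ by an element of $\operatorname{Ker}(\sigma)$, and the anchor of $M'$ vanishes on $\operatorname{Ker}(\sigma)$, the term $\rho_{M'}([s(x),s(y)]_{M'})(a)$ equals $\rho_{M'}(s[x,y]_L)(a)=\rho_L([x,y]_L)(a)$, which is what makes the three terms cancel. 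Tracking these corrections carefully — and verifying that the extra $\alpha_{M'}$'s introduced by passing through $\alpha$-centrality land exactly where relation (3) of $\mathfrak{uce}^{\phi}_A L$ needs them — is the crux; everything else is a routine transcription of the proof of Theorem \ref{Existence 1}.
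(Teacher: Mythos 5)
Your proposal follows essentially the same route as the paper: you construct the same comparison map $\psi(a,\alpha_L(x),\alpha_L(y))=a[\alpha_{M'}(s(x)),\alpha_{M'}(s(y))]_{M'}$ from a set-theoretic section $s$, descend it to $\mathfrak{uce}^{\phi}_A(\mathcal{L},\alpha_L)$ by the same verifications as in Theorem \ref{Existence 1}, and obtain uniqueness from Lemma \ref{uniq2} via the $\alpha$-perfectness of $\mathfrak{uce}^{\phi}_A(\mathcal{L},\alpha_L)$. Your added remarks (surjectivity of $u_L$, $\alpha$-perfectness of the total space, the cancellation on the fourth family of generators) only flesh out steps the paper states briefly, so the argument is correct and not a genuinely different proof.
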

\begin{Rem}
In the case of hom-Lie algebras, universal $\alpha$-central extensions are closely related to the homology of the hom-Lie algebra with trivial coefficients (see \cite{CIP15, CKP17}). However for a hom-Lie-Rinehart algebra $(\mathcal{L},\alpha_L)$ over $(A,\phi)$, such relations are not possible since there does not exist a canonical right $(\mathcal{L},\alpha_L)$-module structure on the pair $(A,\phi)$. Such relations are not even possible in the case of Lie-Rinehart algebras \cite{CGML14}.  
\end{Rem}
\begin{Def}\label{Universality1}
An $\alpha$-central extension 
$(\mathcal{M},\alpha)\xrightarrow{i} (\mathcal{K},\alpha_K) \xrightarrow{\sigma}(\mathcal{L},\alpha_L)$  of a hom-Lie-Rinehart algebra $(\mathcal{L},\alpha_L)$ is said to be {\bf universal} if for any other central extension
$
(\mathcal{M}^{\prime},\alpha_{M^{\prime}})\xrightarrow{j} (\mathcal{L}^{\prime},\alpha_{L^{\prime}}) \xrightarrow{\tau}(\mathcal{L},\alpha_L)$  of $(\mathcal{L},\alpha_L)$,
there exists a unique morphism $h:(\mathcal{K},\alpha_K)\rightarrow (\mathcal{L}^{\prime},\alpha_{L^{\prime}})$ in the category $hLR_A^{\phi}$ such that $\tau\circ h=\sigma$.
\end{Def}

\begin{Prop}\label{Compwithuniv}
Let $(\mathcal{M},\alpha_M)\xrightarrow{i} (\mathcal{K},\alpha_K) \xrightarrow{\sigma}(\mathcal{L},\alpha_L)$ be a central extension of $(\mathcal{L},\alpha_L)$, and the extension $
(\mathcal{N},\alpha_N)\xrightarrow{j} (\mathcal{L}^{\prime},\alpha_{L^{\prime}}) \xrightarrow{\tau}(\mathcal{K},\alpha_K)$ be a universal $\alpha$-central extension, then the $\alpha$-central extension 
$$(\mathcal{N}^{\prime},\alpha_{N^{\prime}})\xrightarrow{j^{\prime}} (\mathcal{L}^{\prime},\alpha_{L^{\prime}}) \xrightarrow{\sigma\circ\tau}(\mathcal{L},\alpha_L)
$$ is universal in the sense of Definition \ref{Universality1}.
\end{Prop}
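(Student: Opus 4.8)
The plan is to verify the universal property of Definition \ref{Universality1} for the composite $\sigma\circ\tau$ by producing, for an arbitrary central extension $(\mathcal{M}'',\alpha_{M''})\xrightarrow{k}(\mathcal{L}'',\alpha_{L''})\xrightarrow{\pi}(\mathcal{L},\alpha_L)$, a morphism $(\mathcal{L}',\alpha_{L'})\to(\mathcal{L}'',\alpha_{L''})$ over $(\mathcal{L},\alpha_L)$, and then showing it is unique. First I would form the pull-back of $\pi$ along $\sigma:(\mathcal{K},\alpha_K)\to(\mathcal{L},\alpha_L)$, exactly as in the proof of Theorem \ref{Characterisation}: set $Q=\{(x,z)\in K\times L'':\sigma(x)=\pi(z)\}$ with $\alpha_Q=(\alpha_K,\alpha_{L''})$ and anchor $\rho_Q(x,z)=\rho_L(\sigma(x))$, which is a hom-Lie-Rinehart algebra by Example \ref{Product}. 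The projection $\mathrm{pr}_1:(\mathcal{Q},\alpha_Q)\to(\mathcal{K},\alpha_K)$ is surjective with kernel $\{0\}\times\ker(\pi)$ sitting inside the center, so $(\ker(\mathrm{pr}_1),\alpha_Q|)\hookrightarrow(\mathcal{Q},\alpha_Q)\xrightarrow{\mathrm{pr}_1}(\mathcal{K},\alpha_K)$ is a central extension of $(\mathcal{K},\alpha_K)$.

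Next I would invoke the hypothesis that $(\mathcal{N},\alpha_N)\xrightarrow{j}(\mathcal{L}',\alpha_{L'})\xrightarrow{\tau}(\mathcal{K},\alpha_K)$ is a universal $\alpha$-central extension. By Theorem \ref{splitting}, $(\mathcal{K},\alpha_K)$ is perfect and \emph{every} central extension of $(\mathcal{K},\alpha_K)$ splits uniquely; applying this to the pull-back extension above gives a unique section $s:(\mathcal{K},\alpha_K)\to(\mathcal{Q},\alpha_Q)$ with $\mathrm{pr}_1\circ s=\mathrm{Id}_K$. Composing with the other projection $\mathrm{pr}_2:(\mathcal{Q},\alpha_Q)\to(\mathcal{L}'',\alpha_{L''})$ produces a morphism $\mathrm{pr}_2\circ s:(\mathcal{K},\alpha_K)\to(\mathcal{L}'',\alpha_{L''})$ in $hLR_A^{\phi}$ with $\pi\circ(\mathrm{pr}_2\circ s)=\sigma$. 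Finally, precomposing with $\tau$ yields $h:=(\mathrm{pr}_2\circ s)\circ\tau:(\mathcal{L}',\alpha_{L'})\to(\mathcal{L}'',\alpha_{L''})$, and $\pi\circ h=\pi\circ(\mathrm{pr}_2\circ s)\circ\tau=\sigma\circ\tau$, so $h$ is a morphism over $(\mathcal{L},\alpha_L)$ as required.

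For uniqueness, suppose $h,h':(\mathcal{L}',\alpha_{L'})\to(\mathcal{L}'',\alpha_{L''})$ both satisfy $\pi\circ h=\pi\circ h'=\sigma\circ\tau$. By Lemma \ref{Comp}, the composite $(\mathcal{N}',\alpha_{N'})\xrightarrow{j'}(\mathcal{L}'',\alpha_{L''})\xrightarrow{\pi}(\mathcal{L},\alpha_L)$ — more precisely the given central extension $\pi$ viewed appropriately — together with the perfectness of the source lets us compare $h$ and $h'$: since $(\mathcal{L}',\alpha_{L'})$ is $\alpha$-perfect (being the source of a universal $\alpha$-central extension, it is perfect, and one checks $\alpha$-perfectness from the surjectivity of $\tau$ and $\alpha$-perfectness of $(\mathcal{K},\alpha_K)$) and $\ker(\pi)\subset Z_A(\mathcal{L}'')$, Lemma \ref{uniq} (or Lemma \ref{uniq2}) forces $h=h'$ on brackets and hence everywhere.

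The main obstacle I anticipate is the bookkeeping around which extension is ``$\alpha$-central'' versus ``central'' and ensuring the uniqueness argument has the right perfectness hypothesis available: the cleanest route is to first record that the source $(\mathcal{L}',\alpha_{L'})$ of the universal $\alpha$-central extension is itself perfect (indeed $\alpha$-perfect) via Theorem \ref{splitting} applied to $\tau$, and then apply Lemma \ref{uniq} directly to the central extension $\pi$. Everything else is a formal diagram chase using the pull-back construction already exploited in Theorem \ref{Characterisation}.
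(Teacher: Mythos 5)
There is a genuine gap in your existence step. You pull back the arbitrary central extension $\pi:(\mathcal{L}'',\alpha_{L''})\to(\mathcal{L},\alpha_L)$ along $\sigma$ to obtain a central extension $\mathrm{pr}_1:(\mathcal{Q},\alpha_Q)\to(\mathcal{K},\alpha_K)$, and then claim a unique section of $\mathrm{pr}_1$ because ``by Theorem \ref{splitting}, every central extension of $(\mathcal{K},\alpha_K)$ splits uniquely.'' That is not what Theorem \ref{splitting} gives here: applied to the universal $\alpha$-central extension $(\mathcal{N},\alpha_N)\xrightarrow{j}(\mathcal{L}',\alpha_{L'})\xrightarrow{\tau}(\mathcal{K},\alpha_K)$, it yields perfectness and the unique-splitting property for the \emph{middle} term $(\mathcal{L}',\alpha_{L'})$, not for the base $(\mathcal{K},\alpha_K)$. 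Central extensions of $(\mathcal{K},\alpha_K)$ need not split at all --- $\tau$ itself is a central extension of $(\mathcal{K},\alpha_K)$ and is typically non-split (if it split, $(\mathcal{L}',\alpha_{L'})$ could not be perfect when $N\neq 0$). So the section $s:(\mathcal{K},\alpha_K)\to(\mathcal{Q},\alpha_Q)$ you use to build $h$ does not exist in general, and the morphism $h$ is never actually constructed.

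The argument can be repaired in either of two ways. The paper's route: pull $\gamma$ (there an arbitrary central extension of $(\mathcal{L},\alpha_L)$) back along $\sigma\circ\tau$ to get a central extension of $(\mathcal{L}',\alpha_{L'})$ itself, where Theorem \ref{splitting} (applied to $\tau$) does provide a unique splitting; composing the splitting with the second projection gives the required unique $h:(\mathcal{L}',\alpha_{L'})\to(\mathcal{L}'',\alpha_{L''})$ over $(\mathcal{L},\alpha_L)$. Alternatively, keep your pull-back $(\mathcal{Q},\alpha_Q)\to(\mathcal{K},\alpha_K)$ but note that, being central, it is in particular $\alpha$-central, so the universal property of $\tau$ (not a splitting) gives a unique morphism $(\mathcal{L}',\alpha_{L'})\to(\mathcal{Q},\alpha_Q)$ over $(\mathcal{K},\alpha_K)$; composing with $\mathrm{pr}_2$ produces $h$ with $\pi\circ h=\sigma\circ\tau$. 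Your uniqueness argument is essentially fine once trimmed: $(\mathcal{L}',\alpha_{L'})$ is perfect by Theorem \ref{splitting} and $\ker(\pi)\subset Z_A(\mathcal{L}'')$, so Lemma \ref{uniq} applies directly; the appeals to Lemma \ref{Comp} and to $\alpha$-perfectness of $(\mathcal{L}',\alpha_{L'})$ are unnecessary (and the latter is not justified).
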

\begin{proof}
Since $
(\mathcal{N},\alpha_N)\xrightarrow{j} (\mathcal{L}^{\prime},\alpha_{L^{\prime}}) \xrightarrow{\tau}(\mathcal{K},\alpha_K) $, is a universal $\alpha$-central extension, by Theorem \ref{splitting} the hom-Lie-Rinehart algebra $(\mathcal{L}^{\prime},\alpha_{L^{\prime}})$ is perfect and consequently $(\mathcal{K},\alpha_K)$ is also perfect. Next, by Lemma \ref{Comp}, the composition $(\mathcal{N}^{\prime},\alpha_{N^{\prime}})\xrightarrow{j^{\prime}} (\mathcal{L}^{\prime},\alpha_{L^{\prime}}) \xrightarrow{\sigma\circ\tau}(\mathcal{L},\alpha_L)
$ is an $\alpha-$central extension. Let 
$$\begin{CD}
(\mathcal{H},\alpha_H)@>k>> (\mathcal{F},\alpha_F) @> \gamma>>(\mathcal{L},\alpha_L)
\end{CD}$$
be a central extension. Then similar to the proof of Theorem \ref{Characterisation}, we can construct a central extension  of $(\mathcal{L}^{\prime},\alpha_{L^{\prime}})$ via the pull-back diagram with respect to homomorphisms $\sigma\circ \tau$ and $\gamma$ of $(\mathcal{L}^{\prime},\alpha_{L^{\prime}})$) given by $$(\mathcal{N^{\prime\prime}},\alpha_{N^{\prime\prime}})\xrightarrow{k^{\prime}} (\mathcal{P},\alpha_{P}) \xrightarrow{p_{L^{\prime}}}(\mathcal{L}^{\prime},\alpha_{L^{\prime}}),$$
where the $A$-module $P$ is given as follows:
 $$P=\{(l,f)\in L^{\prime}\times_{Der_{\phi}(A)} F: \sigma\circ\tau(l)=\gamma(f)\}$$
The map $\alpha_P:P\rightarrow P$ is defined by: $\alpha_P(l,f)=(\alpha_{L^{\prime}}(l),\alpha_F(f))$. Note that every central extension of $(\mathcal{L}^{\prime},\alpha_{L^{\prime}})$ splits uniquely by Theorem \ref{splitting}. Therefore the extension $(\mathcal{N^{\prime\prime}},\alpha_{N^{\prime\prime}})\xrightarrow{k^{\prime}} (\mathcal{P},\alpha_{P}) \xrightarrow{p_{L^{\prime}}}(\mathcal{L}^{\prime},\alpha_{L^{\prime}})$ splits uniquely and the splitting gives a unique homomorphism $h:(\mathcal{L}^{\prime},\alpha_{L^{\prime}})\rightarrow (\mathcal{F},\alpha_F)$ such that $\gamma\circ h= \sigma\circ \tau$. Hence, the $\alpha$-central extension $(\mathcal{N}^{\prime},\alpha_{N^{\prime}})\xrightarrow{j^{\prime}} (\mathcal{L}^{\prime},\alpha_{L^{\prime}}) \xrightarrow{\sigma\circ\tau}(\mathcal{L},\alpha_L)
$ is universal in the sense of Definition \ref{Universality1}.
\end{proof}

\section{Lifting of Automorphisms and Derivations to Central Extensions}
Let $
(\mathcal{N},\alpha_N)\xrightarrow{j} (\mathcal{K} ,\alpha_{K}) \xrightarrow{f}(\mathcal{L},\alpha_L) $
be a central extension, where $(\mathcal{K} ,\alpha_{K})$ is an $\alpha$-perfect hom-Lie-Rinehart algebra. Then $(\mathcal{L},\alpha_L)$ is also an $\alpha$-perfect hom-Lie-Rinehart algebra and we get the following commutative diagram:
$$\begin{CD}\label{diagram}
\mathfrak{uce}^{\phi}_A(\mathcal{K},\alpha_{K})@>uce^{\phi}_A(f)>> \mathfrak{uce}^{\phi}_A(\mathcal{L},\alpha_{L})\\
  @V u_K VV  @V u_L VV \\
(\mathcal{K},\alpha_{K})@>f>> (\mathcal{L},\alpha_{L})
\end{CD}$$ 
Since the central extension $(\mathcal{N}_K,\alpha_{N_K})\xrightarrow{i}\mathfrak{uce}^{\phi}_A(\mathcal{K},\alpha_{K})\xrightarrow{u_K} (\mathcal{K} ,\alpha_{K})$ is a universal $\alpha$-central extension, by using Proposition \ref{Compwithuniv}, the $\alpha-$central extension induced by the map $\mathfrak{uce}^{\phi}_A(\mathcal{K},\alpha_{K})\xrightarrow{f\circ u_K} (\mathcal{L},\alpha_L)$
is universal in the sense of Definition \ref{Universality1}. Consequently, we get a unique homomorphism $$h:\mathfrak{uce}^{\phi}_A(\mathcal{K},\alpha_{K})\rightarrow \mathfrak{uce}^{\phi}_A(\mathcal{L},\alpha_{L})$$ such that $u_L\circ h=f\circ u_K$, i.e. $h=uce^{\phi}_A(f)$. 

Next, since $(\mathcal{N}_L,\alpha_{N_L})\xrightarrow{i}\mathfrak{uce}^{\phi}_A(\mathcal{L},\alpha_{L})\xrightarrow{u_L} (\mathcal{L},\alpha_{L})$ is a universal $\alpha$-central extension, we have a unique homomorphism $g:\mathfrak{uce}^{\phi}_A(\mathcal{L},\alpha_{L})\rightarrow \mathfrak{uce}^{\phi}_A(\mathcal{K},\alpha_{K})$ such that $f\circ u_K\circ g=u_L$. Now, by using Lemma \ref{uniq2}, it follows that $uce^{\phi}_A(f)\circ g=Id_{\mathfrak{uce}^{\phi}_A(\mathcal{L},\alpha_{L})}$ and $g\circ uce^{\phi}_A(f)=Id_{\mathfrak{uce}^{\phi}_A(\mathcal{K},\alpha_{K})}$, i.e. we have an isomorphism:
$$\mathfrak{uce}^{\phi}_A(\mathcal{K},\alpha_{K})\cong \mathfrak{uce}^{\phi}_A(\mathcal{L},\alpha_{L}).$$

It is immediate to see that we get a central extension: $$(\mathcal{P},\alpha_P)\xrightarrow{k}\mathfrak{uce}^{\phi}_A(\mathcal{L},\alpha_{L})\xrightarrow{u_K\circ \mathfrak{uce}_A^{\phi}(f)^{-1}}(\mathcal{K},\alpha_K)$$
where the underlying $A$-module $P:=Ker(u_K\circ \mathfrak{uce}_A^{\phi}(f)^{-1})=\mathfrak{uce}_A^{\phi}(f)(Ker(u_K))$ in the ideal $(\mathcal{P},\alpha_P)$ of hom-Lie-Rinehart algebra $\mathfrak{uce}^{\phi}_A(\mathcal{L},\alpha_{L})$.

Let us denote the set of automorphisms of a hom-Lie-Rinehart algebra $(\mathcal{L},\alpha_L)$ in the category $hLR_A^{\phi}$ by $Aut(\mathcal{L},\alpha_L)$. Then we have the following result for the lifting of automorphisms.

\begin{Thm}\label{Lift1}
Let $
(\mathcal{N},\alpha_N)\xrightarrow{j} (\mathcal{K} ,\alpha_{K}) \xrightarrow{f}(\mathcal{L},\alpha_L) $
be a central extension, where $(\mathcal{K} ,\alpha_{K})$ is an $\alpha$-perfect hom-Lie-Rinehart algebra. If $h\in Aut(\mathcal{L},\alpha_L)$, then there exists a unique automorphism $\tilde{h} \in Aut(\mathcal{K},\alpha_K)$ such that 
$h\circ f=f\circ \tilde{h}$ (i.e. there exists a lifting of $h$) if and only if $\mathfrak{uce}_A^{\phi}(h)(P)=P$. Moreover, $\tilde{h}(Ker(f))=Ker(f)$ and we have a group isomorphism: 
$$\Phi:\{h\in Aut(\mathcal{L},\alpha_L):\mathfrak{uce}_A^{\phi}(h)(P)=P\}\rightarrow \{g\in Aut(\mathcal{K},\alpha_K):g(Ker(f))=Ker(f)\}$$
given by $\Phi(h)=\tilde{h}$.
\end{Thm}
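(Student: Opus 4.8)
The plan is to exploit the machinery already built up: for an $\alpha$-perfect $(\mathcal{K},\alpha_K)$, the map $\mathfrak{uce}_A^{\phi}(\mathcal{K},\alpha_K)\xrightarrow{u_K}(\mathcal{K},\alpha_K)$ is a universal $\alpha$-central extension, and by the isomorphism constructed just before the statement we may identify $\mathfrak{uce}_A^{\phi}(\mathcal{K},\alpha_K)\cong\mathfrak{uce}_A^{\phi}(\mathcal{L},\alpha_L)$ via $\mathfrak{uce}_A^{\phi}(f)$, so that the central extension $u_K\circ\mathfrak{uce}_A^{\phi}(f)^{-1}\colon\mathfrak{uce}_A^{\phi}(\mathcal{L},\alpha_L)\to(\mathcal{K},\alpha_K)$ has kernel $P=\mathfrak{uce}_A^{\phi}(f)(\operatorname{Ker}u_K)$. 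The key point is that for any $h\in\operatorname{Aut}(\mathcal{L},\alpha_L)$ the functoriality of $\mathfrak{uce}_A^{\phi}$ gives an automorphism $\mathfrak{uce}_A^{\phi}(h)$ of $\mathfrak{uce}_A^{\phi}(\mathcal{L},\alpha_L)$ with $u_L\circ\mathfrak{uce}_A^{\phi}(h)=h\circ u_L$, and since $h$ is an isomorphism so is $\mathfrak{uce}_A^{\phi}(h)$, with $\mathfrak{uce}_A^{\phi}(h)^{-1}=\mathfrak{uce}_A^{\phi}(h^{-1})$ and $\mathfrak{uce}_A^{\phi}(h_1 h_2)=\mathfrak{uce}_A^{\phi}(h_1)\mathfrak{uce}_A^{\phi}(h_2)$; this is precisely the statement that $h\mapsto\mathfrak{uce}_A^{\phi}(h)$ is a group homomorphism $\operatorname{Aut}(\mathcal{L},\alpha_L)\to\operatorname{Aut}(\mathfrak{uce}_A^{\phi}(\mathcal{L},\alpha_L))$.

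For the \emph{sufficiency} direction I would argue as follows. Assume $\mathfrak{uce}_A^{\phi}(h)(P)=P$. Then $\mathfrak{uce}_A^{\phi}(h)$ descends to the quotient $\mathfrak{uce}_A^{\phi}(\mathcal{L},\alpha_L)/P\cong(\mathcal{K},\alpha_K)$, yielding an automorphism $\tilde h$ of $(\mathcal{K},\alpha_K)$ fitting into the square with the projection $u_K\circ\mathfrak{uce}_A^{\phi}(f)^{-1}$; unwinding the identifications and using $u_L\circ\mathfrak{uce}_A^{\phi}(h)=h\circ u_L$ together with the commutative square $u_L\circ\mathfrak{uce}_A^{\phi}(f)=f\circ u_K$ gives $f\circ\tilde h=h\circ f$. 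Since $\tilde h$ respects the kernel of the projection it automatically satisfies $\tilde h(\operatorname{Ker}f)=\operatorname{Ker}f$. Uniqueness of $\tilde h$ follows from Lemma~\ref{uniq2}: if $\tilde h_1,\tilde h_2$ both lift $h$ then $f\circ\tilde h_1=f\circ\tilde h_2$, and as $(\mathcal{K},\alpha_K)$ is $\alpha$-perfect (hence $f$ is an $\alpha$-central extension, being central) Lemma~\ref{uniq2} forces $\tilde h_1=\tilde h_2$. For the \emph{necessity} direction, suppose a lifting $\tilde h$ exists with $f\circ\tilde h=h\circ f$. Applying the functor $\mathfrak{uce}_A^{\phi}$ to $\tilde h$ and using naturality of $u$, one gets $u_L\circ\mathfrak{uce}_A^{\phi}(f)\circ\mathfrak{uce}_A^{\phi}(\tilde h)=u_L\circ\mathfrak{uce}_A^{\phi}(h)\circ\mathfrak{uce}_A^{\phi}(f)$ — more precisely $\mathfrak{uce}_A^{\phi}(f)\circ\mathfrak{uce}_A^{\phi}(\tilde h)=\mathfrak{uce}_A^{\phi}(h)\circ\mathfrak{uce}_A^{\phi}(f)$ by functoriality — so $\mathfrak{uce}_A^{\phi}(h)=\mathfrak{uce}_A^{\phi}(f)\circ\mathfrak{uce}_A^{\phi}(\tilde h)\circ\mathfrak{uce}_A^{\phi}(f)^{-1}$, and since $\mathfrak{uce}_A^{\phi}(\tilde h)$ is an automorphism of $\mathfrak{uce}_A^{\phi}(\mathcal{K},\alpha_K)$ it preserves $\operatorname{Ker}u_K$ (as $u_K$ is natural and $\mathfrak{uce}_A^{\phi}(\tilde h)$ commutes with $u_K$ over $\tilde h$), whence $\mathfrak{uce}_A^{\phi}(h)$ preserves $\mathfrak{uce}_A^{\phi}(f)(\operatorname{Ker}u_K)=P$.

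Finally, for the group isomorphism $\Phi$: the domain $\{h\in\operatorname{Aut}(\mathcal{L},\alpha_L):\mathfrak{uce}_A^{\phi}(h)(P)=P\}$ is a subgroup because $h\mapsto\mathfrak{uce}_A^{\phi}(h)$ is a group homomorphism and the stabilizer of $P$ is a subgroup; the codomain $\{g\in\operatorname{Aut}(\mathcal{K},\alpha_K):g(\operatorname{Ker}f)=\operatorname{Ker}f\}$ is a subgroup for the analogous reason. The map $\Phi(h)=\tilde h$ is well-defined and bijective by the equivalence just proved (the assignment $h\leftrightarrow\tilde h$ via the relation $f\circ\tilde h=h\circ f$, with $f$ surjective giving injectivity on one side and Lemma~\ref{uniq2} on the other). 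That $\Phi$ is a homomorphism, $\Phi(h_1 h_2)=\Phi(h_1)\Phi(h_2)$, follows from composing the defining relations: $f\circ(\tilde h_1\tilde h_2)=h_1\circ f\circ\tilde h_2=h_1 h_2\circ f$, so $\tilde h_1\tilde h_2$ is the unique lift of $h_1 h_2$, i.e. equals $\widetilde{h_1 h_2}$. The main obstacle — and the place demanding the most care — is bookkeeping the two identifications $\mathfrak{uce}_A^{\phi}(\mathcal{K},\alpha_K)\cong\mathfrak{uce}_A^{\phi}(\mathcal{L},\alpha_L)$ and $\mathfrak{uce}_A^{\phi}(\mathcal{L},\alpha_L)/P\cong(\mathcal{K},\alpha_K)$ consistently, so that the various commuting squares (naturality of $u$, the descent of $\mathfrak{uce}_A^{\phi}(h)$ to the quotient, and the defining square $f\circ\tilde h=h\circ f$) are all expressed with respect to the same maps; once these identifications are fixed the rest is a diagram chase plus an appeal to Lemma~\ref{uniq2} for every uniqueness claim.
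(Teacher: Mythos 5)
Your proposal is correct and follows essentially the same route as the paper: functoriality of $\mathfrak{uce}_A^{\phi}$ plus the identification $\mathfrak{uce}_A^{\phi}(\mathcal{K},\alpha_K)\cong\mathfrak{uce}_A^{\phi}(\mathcal{L},\alpha_L)$ for necessity, descent of $\mathfrak{uce}_A^{\phi}(h)$ along the central extension $u_K\circ\mathfrak{uce}_A^{\phi}(f)^{-1}$ with kernel $P$ for sufficiency, uniqueness from Lemma \ref{uniq}/\ref{uniq2}, and the inverse assignment $g\mapsto h$ (using $g(\mathrm{Ker}\,f)=\mathrm{Ker}\,f$ and surjectivity of $f$) for bijectivity of $\Phi$. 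The only small imprecision is the remark that $\tilde h(\mathrm{Ker}\,f)=\mathrm{Ker}\,f$ follows from $\tilde h$ ``respecting the kernel of the projection'' (that kernel is $P$, not $\mathrm{Ker}\,f$); as in the paper, it should instead be read off from $f\circ\tilde h=h\circ f$ together with the invertibility of $\tilde h$ and $h$.
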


\begin{proof}
Let us assume there exists $\tilde{h} \in Aut(\mathcal{K},\alpha_K)$ such that 
$h\circ f=f\circ \tilde{h}$, then first note that
$$\mathfrak{uce}_A^{\phi}(h)\circ \mathfrak{uce}_A^{\phi}(f)=\mathfrak{uce}_A^{\phi}(f)\circ \mathfrak{uce}_A^{\phi}(\tilde{h}).$$
Then, $$\mathfrak{uce}_A^{\phi}(h)(\mathfrak{uce}_A^{\phi}(f)(Ker(u_K)))=(\mathfrak{uce}_A^{\phi}(f)\circ \mathfrak{uce}_A^{\phi}(\tilde{h}))(Ker(u_K))=\mathfrak{uce}_A^{\phi}(f)(Ker(u_K)),$$
i.e., $\mathfrak{uce}_A^{\phi}(h)(P)=P$. 

Conversely, Let us assume that $\mathfrak{uce}_A^{\phi}(h)(P)=P$. Next, consider the central extension: $$(\mathcal{P},\alpha_P)\xrightarrow{k}\mathfrak{uce}^{\phi}_A(\mathcal{L},\alpha_{L})\xrightarrow{u_K\circ \mathfrak{uce}_A^{\phi}(f)^{-1}}(\mathcal{K},\alpha_K).$$
Define a map $\tilde{h}:(\mathcal{K},\alpha_K)\rightarrow (\mathcal{K},\alpha_K)$ as follows: 
$$\tilde{h}(k)=u_K\circ \mathfrak{uce}_A^{\phi}(f)^{-1}(\mathfrak{uce}_A^{\phi}(h)(\bar{k}))$$
for any $k\in K$. Here $\bar{k}\in \mathfrak{uce}_A^{\phi}L$ such that $u_K\circ \mathfrak{uce}_A^{\phi}(f)^{-1}(\bar{k})=k$. Since $\mathfrak{uce}_A^{\phi}(h)(P)=P$, it is immediate to see that $Ker(u_K\circ \mathfrak{uce}_A^{\phi}(f)^{-1}\circ \mathfrak{uce}_A^{\phi}(h) )=Ker(u_K\circ \mathfrak{uce}_A^{\phi}(f)^{-1})=P$. Then, it easily follows by the definition of the map $\tilde{h}$ that it is an automorphism. By using the condition: $u_L=f\circ u_K\circ \mathfrak{uce}_A^{\phi}(f)^{-1}$, we get $f\circ\tilde{h}=h\circ f$. The uniqueness of the map $\tilde{h}$ follows from Lemma \ref{uniq}. Since, we have $f\circ\tilde{h}=h\circ f$, the condition: $\tilde{h}(Ker(f))=Ker(f)$ also follows. 
 
 If $h\in Aut(\mathcal{L},\alpha_L)$, and $\mathfrak{uce}_A^{\phi}(h)(P)=P$, then it lifts to a unique automorphism $\tilde{h} \in Aut(\mathcal{K},\alpha_K)$ such that 
$h\circ f=f\circ \tilde{h}$. Thus the map $\Phi$ is well defined. Next, using Lemma \ref{uniq}, the map $\Phi$ is also injective. Let $g\in Aut(\mathcal{K},\alpha_K)$ such that $g(Ker(f))=Ker(f)$, then define a map $h:(\mathcal{L},\alpha_L)\rightarrow (\mathcal{L},\alpha_L)$ as follows:
$$h(x)=f(g(\bar{x}))$$
for any $x\in L$, here $\bar{x}\in K$ such that $f(\bar{x})=x$. Using the property $g(Ker(f))=Ker(f)$ it follows that $h\in Aut(\mathcal{L},\alpha_L)$. It is clear that $g\in Aut(\mathcal{K},\alpha_K)$ is a lifting of $h\in Aut(\mathcal{L},\alpha_L)$ and since lifting exists, we have $\mathfrak{uce}_A^{\phi}(h)(P)=P$. Hence, $\Phi$ is a group isomorphism.   

\end{proof}
\begin{Def}\label{alpha-derivations}
Let $(\mathcal{L},\alpha_L)$ be a hom-Lie-Rinehart algebra over $(A,\phi)$. A map $D:L\rightarrow L$ is an {\bf $\alpha$-derivation} of the hom-Lie-Rinehart algebra, if 
\begin{itemize}
\item $D\circ \alpha_L=\alpha_L\circ D;$
\item $D[x,y]=[Dx,\alpha_L(y)]+[\alpha_L(x),Dy]$ for any $x,y\in L$;
\item there exists a $\phi$-derivation $\sigma_D\in Der_{\phi}(A)$, called symbol of $D$, such that\\ $\sigma_D\circ \phi=\phi\circ \sigma_D$, and $D(a.x)=\phi(a).D(x)+\sigma_D(a).\alpha(x)$ for any $x\in L$, and $a\in A$;
\item $\sigma_D(x(a))=\alpha(x)(\sigma_D(a))+D(x)(\phi(a))$ for any for $x\in L$, and $a\in A$. 
\end{itemize}  
\end{Def}

When $A=R$, and $\phi=Id_R$, $(\mathcal{L},\alpha_L)$ becomes a hom-Lie algebra and an $\alpha$-derivation of $(\mathcal{L},\alpha_L)$ is an $\alpha$-derivation of hom-Lie algebra $(L,[-,-]_L,\alpha_L)$ defined in \cite{Sheng12}. 
\begin{Exm}
Let $(\mathcal{L},\alpha_L)$ be a hom-Lie-Rinehart algebra over $(A,\phi)$ and $x\in L$ such that $\alpha_L(x)=x$. Then define a map $D:L\rightarrow L$ as: $D(y)=[\alpha_L(x),y]$ for any $y\in L$. Consider the $\phi-$derivation $\sigma_D\in Der_{\phi}(A)$ defined as: $\sigma_D(a)=x(a)$. Then $D$ is an $\alpha$-derivation of hom-Lie-Rinehart algebra $(\mathcal{L},\alpha_L)$ with the symbol $\sigma_D$.
\end{Exm}

\begin{Prop}\label{RemonDer1}
Let $(\mathcal{N},\alpha_N)\xrightarrow{j} (\mathcal{K} ,\alpha_{K}) \xrightarrow{f}(\mathcal{L},\alpha_L)$ be a central extension of hom-Lie-Rinehart algebra. If $D$ and $D^{\prime}$ are $\alpha$-derivations of $(\mathcal{K},\alpha_K)$ with the same symbol $\sigma_D$ such that $f\circ D=f\circ D^{\prime}$, then $D|_{\{K,K\}}=D^{\prime}|_{\{K,K\}}$. 
\end{Prop}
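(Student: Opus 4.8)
The plan is to work with the single map $E := D - D'$ and prove it vanishes on the $A$-submodule $\{K,K\}$. First I would record the properties $E$ inherits from $D$ and $D'$. Since both are $R$-linear and commute with $\alpha_K$, so does $E$; subtracting the two $\alpha$-derivation Leibniz rules gives
$E[x,y]_K = [Ex,\alpha_K(y)]_K + [\alpha_K(x),Ey]_K$ for all $x,y\in K$; and, crucially, because $D$ and $D'$ share the \emph{same} symbol $\sigma_D$, the symbol terms cancel in the module rule of Definition \ref{alpha-derivations}, so that $E(a.x) = \phi(a).E(x)$ for all $a\in A$, $x\in K$. Thus $E$ is $\phi$-semilinear with zero symbol.

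Next I would exploit the hypothesis $f\circ D = f\circ D'$, that is $f\circ E = 0$, which says $E(x)\in Ker(f)$ for every $x\in K$. Since the extension is central, $Ker(f)\subset Z_A(\mathcal{K},\alpha_K)$, and the definition of the center (specialising $a=1$ in its two bracket conditions) gives $[E(x),z]_K = 0$ and $[\alpha_K(E(x)),z]_K = 0$ for all $x,z\in K$. Feeding this into the Leibniz expression above, and using $E\circ\alpha_K=\alpha_K\circ E$ together with skew-symmetry of the bracket, I get $E[x,y]_K = [Ex,\alpha_K(y)]_K - [Ey,\alpha_K(x)]_K = 0$ for all $x,y\in K$.

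Finally I would promote this from the generating brackets to the whole $A$-submodule $\{K,K\}$: an arbitrary element of $\{K,K\}$ is a finite sum $\sum_i a_i[x_i,y_i]_K$ with $a_i\in A$ and $x_i,y_i\in K$, so by additivity of $E$ and the $\phi$-semilinearity from the first step, $E\big(\sum_i a_i[x_i,y_i]_K\big) = \sum_i \phi(a_i).E([x_i,y_i]_K) = 0$. Hence $E|_{\{K,K\}}=0$, i.e. $D|_{\{K,K\}} = D'|_{\{K,K\}}$, which is the assertion.

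I do not expect a serious obstacle here. The one point that genuinely needs care is that $\{K,K\}$ is the $A$-submodule — not merely the $R$-span — generated by brackets, so the "same symbol" hypothesis is essential: it is exactly what makes $E$ $\phi$-semilinear and lets the vanishing on generators propagate to the submodule. A secondary point is to invoke the correct clauses of the definition of $Z_A(\mathcal{K})$ and to use skew-symmetry when disposing of the second summand $[\alpha_K(x),Ey]_K$.
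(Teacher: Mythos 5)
Your proposal is correct and is essentially the paper's own argument, merely repackaged: the paper expands $D$ on $\sum_i a_i[x_{i_1},x_{i_2}]$ via the module rule and Leibniz rule and substitutes $D'$ inside the brackets using $D(x)-D'(x)\in Ker(f)\subset Z_A(\mathcal{K},\alpha_K)$, which is the same use of the common symbol, the Leibniz identity, and centrality that you phrase through the difference $E=D-D'$. Your extra appeal to $E\circ\alpha_K=\alpha_K\circ E$ is harmless but unnecessary, since skew-symmetry and centrality of $Ey$ already kill the second summand.
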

\begin{proof}
Since $f\circ D=f\circ D^{\prime}$, for any $x\in K$ we have $D(x)-D^{\prime}(x)\in Ker(f)\subset Z_A(\mathcal{K},\alpha_K)$. Thus for any $x,y\in K$ and $a\in A$, we have $a[D(x),y]=a[D^{\prime}(x),y]$. Next, let $x\in \{K,K\}$, i.e. $x=\sum a_i[x_{i_1},x_{i_2}]$, then we get the following:
\begin{align*}
D(x)&=D(\sum_i a_i[x_{i_1},x_{i_2}])\\
&=\sum_i \phi(a_i).D([x_{i_1},x_{i_2}])+\sigma_D(a).\alpha_K([x_{i_1},x_{i_2}])\\
&=\sum_i \phi(a_i).\big([D(x_{i_1}),\alpha_K(x_{i_2})])+[\alpha_K(x_{i_1}),D(x_{i_2})]\big)+\sigma_D(a).\alpha_K([x_{i_1},x_{i_2}])\\
&=D^{\prime}(\sum_i a_i[x_{i_1},x_{i_2}])=D^{\prime}(x).
\end{align*} 
Hence, $D|_{\{K,K\}}=D^{\prime}|_{\{K,K\}}$.
\end{proof}

\begin{Prop}
Let $D$ be an $\alpha$-derivation of hom-Lie-Rinehart algebra $(\mathcal{L},\alpha_L)$ with symbol $\sigma_D$. Define a map $\mathfrak{uce}_A^{\phi}(D):\mathfrak{uce}_A^{\phi}(L)\rightarrow \mathfrak{uce}_A^{\phi}(L)$ as follows:
$$\mathfrak{uce}_A^{\phi}(D)(a,x,y)=(\sigma_D(a),\alpha_L(x),\alpha_L(y))+(\phi(a),D(x),\alpha_L(y))+(\phi(a),\alpha_L(x),D(y))$$
for any $(a,x,y)\in \mathfrak{uce}_A^{\phi}(L)$. Then the map $\mathfrak{uce}_A^{\phi}(D)$ is an $\alpha$-derivation of hom-Lie-Rinehart algebra $\mathfrak{uce}_A^{\phi}(\mathcal{L},\alpha_L)$ with symbol $\sigma_D$. Moreover, recall the homomorphism $u:\mathfrak{uce}_A^{\phi}(\mathcal{L},\alpha_L)\rightarrow (\mathcal{L},\alpha_L)$ defined as $u(a,x,y)=a[x,y]_L$. Then $\mathfrak{uce}_A^{\phi}(D)(Ker(u))\subset Ker(u)$.  

\end{Prop}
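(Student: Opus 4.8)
The statement has two parts: first, that $\mathfrak{uce}_A^{\phi}(D)$ is a well-defined $\alpha$-derivation of $\mathfrak{uce}_A^{\phi}(\mathcal{L},\alpha_L)$ with symbol $\sigma_D$; second, that it preserves $\mathrm{Ker}(u)$. The plan is to handle well-definedness first by checking that the prescription $\Psi_D(a,x,y) = (\sigma_D(a),\alpha_L(x),\alpha_L(y)) + (\phi(a),D(x),\alpha_L(y)) + (\phi(a),\alpha_L(x),D(y))$, regarded as a map on $A\otimes L\otimes L$, annihilates the four families of generators of the submodule $M^{\phi}_A L$. For generators of type (1) and (2) this is immediate from skew-symmetry of the brackets $(a,x,y)=-(a,y,x)$ together with $D$ being a single linear map applied symmetrically. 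For type (3), the hom-Jacobi-type relator, one uses the derivation rule $D[y,z]=[Dy,\alpha_L(z)]+[\alpha_L(y),Dz]$ and $D\circ\alpha_L=\alpha_L\circ D$ to expand each term, and then the identity (2) in $\mathfrak{uce}_A^{\phi}L$ repeatedly; the terms should regroup into copies of the Jacobi relator and cancel. The type (4) relator is the delicate one: one must expand $\Psi_D$ applied to $\phi(a)\otimes[x,y]_L\otimes[x',y']_L + \rho([x,y]_L)(a)\otimes\alpha_L(x')\otimes\alpha_L(y') - 1\otimes[x,y]_L\otimes a[x',y']_L$, and show the result lies in $M^{\phi}_A L$ using the last two axioms of an $\alpha$-derivation, namely $\sigma_D(\phi(a))=\phi(\sigma_D(a))$ and $\sigma_D(x(a))=\alpha(x)(\sigma_D(a))+D(x)(\phi(a))$ (applied with $x=[x,y]_L$), plus the derivation rule and the relator (3) of $\mathfrak{uce}_A^{\phi}L$ itself.

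Once well-definedness is established, the plan is to verify the three defining conditions of an $\alpha$-derivation of $\mathfrak{uce}_A^{\phi}(\mathcal{L},\alpha_L)$ with symbol $\sigma_D$. Commutation with $\tilde{\alpha_L}$ reduces to $D\circ\alpha_L=\alpha_L\circ D$ and $\sigma_D\circ\phi=\phi\circ\sigma_D$ applied coordinatewise, since $\tilde{\alpha_L}(a,x,y)=(\phi(a),\alpha_L(x),\alpha_L(y))$. The Leibniz rule over $A$, i.e. $\mathfrak{uce}_A^{\phi}(D)(b\cdot(a,x,y)) = \phi(b)\cdot\mathfrak{uce}_A^{\phi}(D)(a,x,y)+\sigma_D(b)\cdot\tilde{\alpha_L}(a,x,y)$, follows directly from the $\phi$-derivation rule for $\sigma_D$ on $A$, since the $A$-action on $(a,x,y)$ is just multiplication in the first slot. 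The bracket-derivation identity $\mathfrak{uce}_A^{\phi}(D)[u,v]=[\mathfrak{uce}_A^{\phi}(D)u,\tilde{\alpha_L}(v)]+[\tilde{\alpha_L}(u),\mathfrak{uce}_A^{\phi}(D)v]$ is the most computation-heavy: one substitutes the explicit bracket formula on $\mathfrak{uce}_A^{\phi}L$ into both sides, expands using the derivation rules for $D$ and $\sigma_D$ on $L$ and on $A$ (including $D$ acting on $[x_1,y_1]_L$, and $\sigma_D$ on terms like $[x_1,y_1]_L(a_2)=\rho_L([x_1,y_1]_L)(a_2)$ via the last axiom), and matches terms; the symbol-compatibility axiom $\sigma_D(x(a))=\alpha(x)(\sigma_D(a))+D(x)(\phi(a))$ is exactly what is needed to reconcile the anchor-involving summands. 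The final clause $\sigma_D\circ\tilde{\rho_L}(a,x,y)(b) = \tilde{\rho_L}(\tilde{\alpha_L}(a,x,y))(\sigma_D(b)) + \mathfrak{uce}_A^{\phi}(D)(a,x,y)(\phi(b))$ unwinds to an identity about $\sigma_D$ versus $\rho_L([x,y]_L)$ which again follows from the symbol axiom for $D$ applied to $[x,y]_L$.

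For the last assertion, that $\mathfrak{uce}_A^{\phi}(D)(\mathrm{Ker}(u))\subseteq\mathrm{Ker}(u)$, the plan is to compute $u\big(\mathfrak{uce}_A^{\phi}(D)(a,x,y)\big)$ directly: it equals $\sigma_D(a)[\alpha_L(x),\alpha_L(y)]_L + \phi(a)[D(x),\alpha_L(y)]_L + \phi(a)[\alpha_L(x),D(y)]_L$. Using $[\alpha_L(x),\alpha_L(y)]_L=\alpha_L([x,y]_L)$ and the derivation rule $D[x,y]_L=[Dx,\alpha_L y]_L+[\alpha_L x,Dy]_L$, this rewrites as $\sigma_D(a)\,\alpha_L([x,y]_L) + \phi(a)\,D([x,y]_L) = D\big(a[x,y]_L\big) = D(u(a,x,y))$, where the last equality is precisely the $\phi$-derivation rule for $D$ on $L$ with the element $x$ there being $[x,y]_L$. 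Hence $u\circ\mathfrak{uce}_A^{\phi}(D)=D\circ u$, and in particular $u(a,x,y)=0$ forces $u(\mathfrak{uce}_A^{\phi}(D)(a,x,y))=D(0)=0$, which gives the claim.

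\textbf{Main obstacle.} I expect the bulk of the difficulty to be concentrated in two places: checking that $\Psi_D$ kills the type (4) generator of $M^{\phi}_A L$ (this is where all four axioms of an $\alpha$-derivation must interact, and the bookkeeping of which relator of $\mathfrak{uce}_A^{\phi}L$ absorbs which leftover term is genuinely fiddly), and verifying the bracket-derivation identity on $\mathfrak{uce}_A^{\phi}L$, where the three-term bracket formula produces many summands that only cancel after repeated use of the symbol-compatibility axiom. Everything else is routine once the definitions are unwound; the identity $u\circ\mathfrak{uce}_A^{\phi}(D)=D\circ u$ is short and clean.
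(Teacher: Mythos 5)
Your proposal is correct and follows essentially the same route as the paper: the kernel-invariance part is exactly the paper's computation $u\bigl(\mathfrak{uce}_A^{\phi}(D)(a,x,y)\bigr)=\sigma_D(a)\,\alpha_L([x,y]_L)+\phi(a)\,D([x,y]_L)=D(a[x,y]_L)$, i.e. $u\circ\mathfrak{uce}_A^{\phi}(D)=D\circ u$. The only difference is one of detail: the paper simply asserts that the first part (well-definedness and the $\alpha$-derivation axioms) ``easily follows'' from the definitions, whereas you spell out the relator-by-relator and axiom-by-axiom verification you would carry out, which is a fleshing-out of the same argument rather than a different one.
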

\begin{proof}
By using the definition of the hom-Lie-Rinehart algebra $\mathfrak{uce}_A^{\phi}(\mathcal{L},\alpha_L)$ and the map $\mathfrak{uce}_A^{\phi}(D)$, it easily follows that $\mathfrak{uce}_A^{\phi}(D)$ is an $\alpha$-derivation of hom-Lie-Rinehart algebra $\mathfrak{uce}_A^{\phi}(\mathcal{L},\alpha_L)$ with symbol $\sigma_D$. Let us denote $\mathfrak{uce}_A^{\phi}(D):=D^u$. Next, let $(a,x,y)\in Ker(u)$, i.e. $a[x,y]=0$, which implies $D(a[x,y])=0$.
\begin{align*}
u(D^u(a,x,y))&=u((\sigma_D(a),\alpha_L(x),\alpha_L(y))+(\phi(a),D(x),\alpha_L(y))+(\phi(a),\alpha_L(x),D(y)))\\
&=\sigma_D(a).[\alpha_L(x),\alpha_L(y)]+\phi(a).[D(x),\alpha_L(y)]+\phi(a).[\alpha_L(x),D(y)]\\
&=D(a.[x,y])\\
&=0.
\end{align*}
Thus, $\mathfrak{uce}_A^{\phi}(D)(Ker(u)):=D^u(Ker(u))\subset Ker(u)$.
\end{proof}

\begin{Rem}\label{RemDer2}
Let $f:(\mathcal{K},\alpha_K)\rightarrow (\mathcal{L},\alpha_L)$ be a homomorphism in the category $hLR_A^{\phi}$. If $D_K$ is an $\alpha$-derivation of $(\mathcal{K},\alpha_K)$, and $D_L$ is an $\alpha$-derivation of $(\mathcal{K},\alpha_K)$ with same symbols $\sigma_D\in Der_{\phi}(A)$ such that   
$f\circ D_K=D_L\circ f$, then $\mathfrak{uce}_A^{\phi}(f)\circ D_K^u=D_L^u\circ\mathfrak{uce}_A^{\phi}(f)$.

\end{Rem}

\begin{Thm}\label{Lift2}
Let $$
(\mathcal{N},\alpha_N)\xrightarrow{j} (\mathcal{K} ,\alpha_{K}) \xrightarrow{f}(\mathcal{L},\alpha_L) $$
be a central extension, where $(\mathcal{K} ,\alpha_{K})$ is an $\alpha$-perfect hom-Lie-Rinehart algebra. An $\alpha$-derivation $D_L$ of $(\mathcal{L},\alpha_L)$ (with symbol $\sigma_D\in Der_{\phi}(A)$) lifts to a unique $\alpha$-derivation $D_K$ of $(\mathcal{K},\alpha_K)$ (with same symbol $\sigma_D\in Der_{\phi}(A)$) satisfying 
$D_L f=f D_K$ if and only if $D_L^u(P)\subset P$. Moreover, $Ker(f)$ is invariant under the $\alpha$-derivation $D_K$, i.e. $D_K(Ker(f))\subset Ker(f)$. 
\end{Thm}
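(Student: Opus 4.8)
The plan is to follow the proof of Theorem~\ref{Lift1} almost verbatim, replacing the action of the functor $\mathfrak{uce}_A^{\phi}(-)$ on automorphisms by its analogue $D\mapsto D^u:=\mathfrak{uce}_A^{\phi}(D)$ on $\alpha$-derivations, and using Proposition~\ref{RemonDer1} in place of Lemma~\ref{uniq} for uniqueness. Throughout I keep the ambient data set up before Theorem~\ref{Lift1}: the isomorphism $\mathfrak{uce}_A^{\phi}(\mathcal{K},\alpha_K)\cong\mathfrak{uce}_A^{\phi}(\mathcal{L},\alpha_L)$ realised by $\mathfrak{uce}_A^{\phi}(f)$, the central extension $(\mathcal{P},\alpha_P)\xrightarrow{k}\mathfrak{uce}_A^{\phi}(\mathcal{L},\alpha_L)\xrightarrow{p}(\mathcal{K},\alpha_K)$ with $p:=u_K\circ\mathfrak{uce}_A^{\phi}(f)^{-1}$, $Ker(p)=P=\mathfrak{uce}_A^{\phi}(f)(Ker(u_K))$, and the identity $u_L=f\circ p$.

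For necessity, suppose $D_K$ is an $\alpha$-derivation of $(\mathcal{K},\alpha_K)$ with symbol $\sigma_D$ and $D_Lf=fD_K$. By Remark~\ref{RemDer2} we get $\mathfrak{uce}_A^{\phi}(f)\circ D_K^u=D_L^u\circ\mathfrak{uce}_A^{\phi}(f)$. Applying both sides to $Ker(u_K)$ and using $D_K^u(Ker(u_K))\subseteq Ker(u_K)$ from the Proposition preceding Remark~\ref{RemDer2}, one obtains $D_L^u(P)=D_L^u(\mathfrak{uce}_A^{\phi}(f)(Ker(u_K)))=\mathfrak{uce}_A^{\phi}(f)(D_K^u(Ker(u_K)))\subseteq P$.

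For sufficiency, assume $D_L^u(P)\subseteq P$ and define $D_K\colon K\to K$ by $D_K(x)=p(D_L^u(\bar x))$, where $\bar x\in\mathfrak{uce}_A^{\phi}L$ is any preimage of $x$ under the surjection $p$. This is well defined: if $p(\bar x)=p(\bar x')$ then $\bar x-\bar x'\in Ker(p)=P$, hence $D_L^u(\bar x)-D_L^u(\bar x')\in D_L^u(P)\subseteq P=Ker(p)$. Then one checks that $D_K$ is an $\alpha$-derivation of $(\mathcal{K},\alpha_K)$ with symbol $\sigma_D$: by the Proposition preceding Remark~\ref{RemDer2}, $D_L^u$ is an $\alpha$-derivation of $\mathfrak{uce}_A^{\phi}(\mathcal{L},\alpha_L)$ with symbol $\sigma_D$, and since $p$ is an $R$-linear surjective morphism in $hLR_A^{\phi}$ (so $p\circ\tilde{\alpha_L}=\alpha_K\circ p$, $p(a.\xi)=a.p(\xi)$, $p[\xi,\eta]=[p(\xi),p(\eta)]$, and $\rho_K(p(\xi))=\tilde{\rho_L}(\xi)$ as $\phi$-derivations), each of the four defining identities of Definition~\ref{alpha-derivations} for $D_K$ is obtained by applying $p$ to the corresponding identity for $D_L^u$ evaluated at chosen preimages.

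Finally, to see $D_Lf=fD_K$, a direct computation on the generators $(a,x,y)$ gives $u_L\circ D_L^u=D_L\circ u_L$ — both sides equal $\sigma_D(a)[\alpha_L(x),\alpha_L(y)]_L+\phi(a)[D_L(x),\alpha_L(y)]_L+\phi(a)[\alpha_L(x),D_L(y)]_L$ — whence for $x=p(\bar x)$ we get $f(D_K(x))=f(p(D_L^u(\bar x)))=u_L(D_L^u(\bar x))=D_L(u_L(\bar x))=D_L(f(x))$. Uniqueness of $D_K$ follows from Proposition~\ref{RemonDer1}: two lifts with symbol $\sigma_D$ agree on $\{K,K\}$, and $K=\{K,K\}$ since $(\mathcal{K},\alpha_K)$ is $\alpha$-perfect, hence perfect, so they agree on all of $K$. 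For $n\in Ker(f)$ we have $f(D_K(n))=D_L(f(n))=0$, i.e. $D_K(Ker(f))\subseteq Ker(f)$. The main obstacle is the well-definedness of $D_K$, which is exactly where the hypothesis $D_L^u(P)\subseteq P$ enters together with the identification $P=Ker(p)$; once that is in place, verifying the $\alpha$-derivation axioms for $D_K$ is a routine transport along the surjection $p$.
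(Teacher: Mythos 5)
Your proposal is correct and follows essentially the same route as the paper: the same necessity computation via Remark \ref{RemDer2} and $D_K^u(Ker(u_K))\subset Ker(u_K)$, and the same construction $D_K(k)=u_K\circ\mathfrak{uce}_A^{\phi}(f)^{-1}(D_L^u(\bar k))$ on the central extension $(\mathcal{P},\alpha_P)\rightarrow\mathfrak{uce}_A^{\phi}(\mathcal{L},\alpha_L)\rightarrow(\mathcal{K},\alpha_K)$, with uniqueness from Proposition \ref{RemonDer1} and perfectness. You merely make explicit some steps the paper leaves as "easily follows" (well-definedness from $Ker(p)=P$, and $u_L\circ D_L^u=D_L\circ u_L$ yielding $fD_K=D_Lf$), which is a welcome but not essentially different elaboration.
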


\begin{proof}
Let us assume that a lift $D_K$ of the $\alpha$-derivation $D_L$ exists such that $D_L f=f D_K$, then by Proposition \ref{RemonDer1}, and the perfectness of the hom-Lie-Rinehart algebra $(\mathcal{K},\alpha_K)$ this lift $D_K$ is unique. Next,
\begin{align*}
D_L^u(P)&=D_L^u(\mathfrak{uce}^{\phi}_A(f)(Ker(u_K)))\\
&=(D_L^u\circ\mathfrak{uce}^{\phi}_A(f))(Ker(u_K))\\
&=(\mathfrak{uce}^{\phi}_A(f)\circ D_K^u)(Ker(u_K))\\
&\subset \mathfrak{uce}^{\phi}_A(f)(Ker(u_K))=P.
\end{align*} 
Here, we used the commutativity: $D_L^u\circ\mathfrak{uce}^{\phi}_A(f)=\mathfrak{uce}^{\phi}_A(f)\circ D_K^u$ from Remark \ref{RemDer2} and the fact that $D_K^u(Ker(u_K))\subset Ker(u_K)$. 

Conversely, let $D_L$ is an $\alpha$-derivation of $(\mathcal{L},\alpha_L)$ with the symbol $\sigma_D\in Der_{\phi}(A)$ such that $D_L^u(P)\subset P$.
Also, consider the central extension: 
$$(\mathcal{P},\alpha_P)\xrightarrow{k}\mathfrak{uce}^{\phi}_A(\mathcal{L},\alpha_{L})\xrightarrow{u_K\circ \mathfrak{uce}_A^{\phi}(f)^{-1}}(\mathcal{K},\alpha_K).$$
Next, let us define a map $D_K:K\rightarrow K$ as follows:
$$D_K(k)=u_K\circ \mathfrak{uce}_A^{\phi}(f)^{-1}(\mathfrak{uce}_A^{\phi}(D_L)(\bar{k}))$$
for any $k\in K$. Here $\bar{k}\in \mathfrak{uce}_A^{\phi}L$ such that $u_K\circ \mathfrak{uce}_A^{\phi}(f)^{-1}(\bar{k})=k$. Then, by the definition of the map $D_K$ and the fact that $D_L^u(P)\subset P$, it easily follows that $D_K$ is a well-defined $\alpha$-derivation of $(\mathcal{K},\alpha_K)$ with symbol $\sigma_D$. Let $k\in Ker(f)$, then 
$$f(D_K(k))=D_L(f(k))=0,$$
i.e. $D_K(Ker(f))\subset Ker(f)$. 
\end{proof}
\begin{Rem}
In particular, if $A=R$ and $\phi=Id_A$, then any hom-Lie-Rinehart algebra $(\mathcal{L},\alpha_L)$ over $(A,\phi)$ is just a hom-Lie $R$-algebra $(L,[-,-]_L,\alpha_L)$. Then the above Theorem \ref{Lift1} and Theorem \ref{Lift2} answer the question: when we can lift automorphisms and $\alpha$-derivations (\cite{Sheng12}) of $\alpha$-perfect hom-Lie algebras to central extensions. In fact, using the same proof and techniques one can get the results for lifting of $\alpha^k$-derivations (\cite{Sheng12}) of $\alpha$-perfect hom-Lie algebras. 
\end{Rem}
\section{Non-abelian Tensor product of hom-Lie-Rinehart algebras}
 In this section, first we define quasi hom-actions of a hom-Lie-Rinehart algebra on another hom-Lie-Rinehart algebra in the category $hLR_A^{\phi}$. Then we introduce a non-abelian tensor product of hom-Lie-Rinehart algebras in $hLR_A^{\phi}$ and discuss its properties. Finally, we express the universal central extension (and the universal $\alpha$-central extension) of a hom-Lie-Rinehart algebra in terms of this non-abelian tensor product.
  
\begin{Def}
Let $(\mathcal{L},\alpha_L)$ and $(\mathcal{M},\alpha_M)$ be hom-Lie-Rinehart algebras over $(A,\phi)$. A map $\theta :L\times M\rightarrow M$ given by $\theta(x,m)=x_m$ is called a {\bf quasi hom-action} of $(\mathcal{L},\alpha_L)$ on $(\mathcal{M},\alpha_M)$ if the following hold:

\begin{enumerate}
\item $\prescript{x}{}{(a.m)}=\phi(a).\prescript{x}{}{m}+x(a).\alpha_{M}(m);$
\item $\prescript{[x,y]}{}{\alpha_M(m)}=\prescript{\alpha_L(x)}{}{(\prescript{y}{}{m})}-\prescript{\alpha_L(y)}{}{(\prescript{x}{}{m})},$
\item $\prescript{\alpha_L(x)}{}{[m,n]}=[\prescript{x}{}{m},\alpha_M(n)]+[\alpha_M(m),\prescript{x}{}{n}],$
\item $\prescript{x}{}{m}(\phi(a))=\alpha_L(x)(m(a)
)-\alpha_M(m)(x(a)),$
\item $\alpha_M(\prescript{x}{}{m})=\prescript{\alpha_L(x)}{}{\alpha_M(m)},$
\end{enumerate}
for all $x,y\in L$, $m,n\in M$, and $a\in A$.
\end{Def}

In particular, if $A=R$, and $\phi=Id_A$, the above definition gives the hom-action of a hom-Lie algebra $(L,[-,-]_L,\alpha_L)$ on hom-Lie algebra $(M,[-,-]_M,\alpha_M)$, as defined in \cite{CKP17}.

\begin{Exm}
Let $(\mathcal{L},\alpha_L)$ be a hom-Lie-Rinehart algebra over $(A,\phi)$, then the left action of $(\mathcal{L},\alpha_L)$ on itself by the underlying hom-Lie bracket: $(x,y)\mapsto [x,y]_L$ is a quasi hom action of $(\mathcal{L},\alpha_L)$ on itself. 

\end{Exm}
Let $(\mathcal{L},\alpha_L)$ and $(\mathcal{M},\alpha_M)$ be hom-Lie-Rinehart algebras over $(A,\phi)$, which have quasi hom-actions on each other, then these 
two quasi hom-actions $L\times M\rightarrow M$; $(x,m)\mapsto \prescript{x}{}{m}$ and $M\times L\rightarrow L$; $(m,x)\mapsto \prescript{m}{}{x}$ are said to be compatible if 

\begin{enumerate}
\item $\prescript{x}{}{m}(a)=-\prescript{m}{}{x}(a),$
\item $\prescript{\prescript{m}{}{x}}{}{n}=[n,\prescript{x}{}{m}],$
\item $\prescript{\prescript{x}{}{m}}{}{y}=[y,\prescript{m}{}{x}],$
\end{enumerate}
for all $x,y\in L$, $m,n\in M$, and $a\in A$. 

If $(\mathcal{L},\alpha_L)$ is a hom-Lie-Rinehart algebra (over $(A,\phi)$), and  pairs $(\mathcal{N},\alpha_N)$ and $(\mathcal{P},\alpha_P)$ are two quasi ideals of $(\mathcal{L},\alpha_L)$ then the quasi hom-actions of these quasi ideals on each other, given by the underlying hom-Lie bracket are compatible.

\begin{Def}
Let $(\mathcal{L},\alpha_L)$ and $(\mathcal{M},\alpha_M)$ be two hom-Lie-Rinehart algebras over $(A,\phi)$ and $(x,m)\mapsto \prescript{x}{}{m}$, $(m,x)\mapsto \prescript{m}{}{x}$ defines quasi hom-action of these two on each other. Let $(\mathcal{N},\alpha_N)$ hom-Lie-Rinehart algebras over $(A,\phi)$ then a bilinear map $f:L\times M\rightarrow N$ is called a {\bf hom-Lie-Rinehart pairing} if the following conditions hold.
\begin{enumerate}
\item $f(x,m)(a)=(\prescript{x}{}{m})(a),$
\item $f([x,y],\alpha_M(m))=f(\alpha_L(x),\prescript{y}{}{m})-f(\alpha_{L}(y),\prescript{x}{}{m}),$
\item $f(\alpha_L(x),[m,n])=f(\prescript{n}{}{x},\alpha_M(m))-f(\prescript{m}{}{x},\alpha_M(n)),$
\item $f(\alpha_L(x),\alpha_M(m))=\alpha_N(f(x,m)),$
\item $f(a.\prescript{m}{}{x},b.\prescript{y}{}{n})=-\phi(ab).[f(x,m),f(y,n)]-\phi(a)f(x,m)(b).\alpha_N(f(y,n))+\phi(b)f(y,n)(a).\alpha_N(f(x,m))$
\end{enumerate} 
for all $x,y\in L$, $a,b\in A$, and $m,n\in M$. A hom-Lie-Rinehart pairing $f:L\times M\rightarrow N$ is called universal if for any other hom-Lie-Rinehart pairing $g:L\times M\rightarrow N^{\prime}$, there exists a unique morphism $h:(\mathcal{N},\alpha_N)\rightarrow (\mathcal{N}^{\prime},\alpha_{N^{\prime}})$ in $hLR_A^{\phi}$ such that $h\circ f=g$.
\begin{Exm}
Let $(\mathcal{L},\alpha_L)$ is a hom-Lie-Rinehart algebra and $(\mathcal{N},\alpha_N)$ and $(\mathcal{P},\alpha_P)$ are two quasi ideals of $(\mathcal{L},\alpha_L)$ then the map $f:N\times P\rightarrow N\cap P$ defined by $f(n,p)=[n,p]$, is a hom-Lie-Rinehart pairing. 
\end{Exm}
\end{Def}
Let $(\mathcal{L},\alpha_L), (\mathcal{M},\alpha_M)\in hLR_A^{\phi}$, and let both of them have compatible quasi hom-actions on each-other, then let us define an $A$-module: $L\ast M$, spanned by the symbols $x\ast m$ satisfying:
\begin{enumerate}
\item $(x+ky)\ast m=x\ast m+k(y\ast m),$
\item $x\ast (m+kn)=x\ast m+k(x\ast n),$
\item $[x,y]\ast \alpha_M(m)=\alpha_L(x)\ast \prescript{y}{}{m}-\alpha_L(y)\ast \prescript{x}{}{m},$
\item $\alpha_L(x)\ast [m,n]=\prescript{n}{}{x}\ast \alpha_M(m)-\prescript{m}{}{x}\ast \alpha_M(n),$
\item $a(\prescript{m}{}{x})\ast b(\prescript{y}{}{n})=-b(\prescript{n}{}{y})\ast a(\prescript{x}{}{m}),$
\item $a(\prescript{m}{}{x})\ast b(\prescript{y}{}{n})=\phi(ab).(\prescript{m}{}{x}\ast\prescript{y}{}{n})-\phi(a)\prescript{x}{}{m}(b).(\alpha_L(y)\ast \alpha_M(n))$\\
+$\phi(b)\prescript{y}{}{n}(a).(\alpha_L(x)\ast \alpha_M(m)),$
\end{enumerate} 
for all $x,y\in L$, $m,n\in M$, $k\in R$, and $a,b\in A$. 

\begin{Prop}\label{NON-AB}
 The tuple $(\mathcal{L}\ast \mathcal{M},\alpha_{L\ast M}):=(A,L\ast M,\phi,[-,-]_{L\ast M},\alpha_{L\ast M},\rho_{L\ast M})$ is a hom-Lie-Rinehart algebra over $(A,\phi)$, where:
 \begin{itemize}
 \item  The endomorphism $\alpha_{L\ast M}:L\ast M\rightarrow L\ast M$ is defined by: $$\alpha_{L\ast M}(x\ast m)=\alpha_L(x)\ast \alpha_M(m),$$
 \item Hom-Lie bracket $[-,-]_{L\ast M}$ is given by: 
 $$[a(x\ast m),b(y\ast n)]=-a(\prescript{m}{}{x})\ast b(\prescript{y}{}{n}),$$
 \item The anchor map $\rho_{L\ast M}:L\ast M\rightarrow Der_{\phi}(A)$ is defined by: $$\rho_{L\ast M}(x\ast m)(a)=\prescript{x}{}{m}(a)$$

\end{itemize}  
for any $x,y\in L$, $m,n\in M$, and $a\in A$.
\end{Prop}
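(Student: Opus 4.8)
The plan is to check, in turn, that the three structure maps $\alpha_{L\ast M}$, $[-,-]_{L\ast M}$ and $\rho_{L\ast M}$ are well defined on the $A$-module $L\ast M$ — which is presented by the generators $x\ast m$ subject to the relations (1)--(6) listed above — and then that the resulting tuple satisfies the five conditions of Definition \ref{hom-LR}.

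I would begin with well-definedness, which is the only place where genuine work is needed. For each of the six defining relations of $L\ast M$ one applies the proposed formula for $\alpha_{L\ast M}$, for $[-,-]_{L\ast M}$, and for $\rho_{L\ast M}$, and checks that the output again lies in the span of those relations. For $\alpha_{L\ast M}(x\ast m)=\alpha_L(x)\ast\alpha_M(m)$ the essential inputs are the quasi hom-action identity $\alpha_M(\prescript{x}{}{m})=\prescript{\alpha_L(x)}{}{\alpha_M(m)}$ (and its analogue for the $M$-action on $L$) together with the fact that $\phi$ is an algebra homomorphism. For the bracket $[a(x\ast m),b(y\ast n)]=-a(\prescript{m}{}{x})\ast b(\prescript{y}{}{n})$ one uses the quasi hom-action axioms (2),(3),(4) and the compatibility conditions (1),(2),(3); relation (6), which mixes the $A$-action with the symbol terms and with $\alpha_L,\alpha_M$, is the delicate one. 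For $\rho_{L\ast M}(x\ast m)(a)=\prescript{x}{}{m}(a)$, observe first that $\prescript{x}{}{m}\in M$, so automatically $\rho_{L\ast M}(x\ast m)=\rho_M(\prescript{x}{}{m})\in Der_{\phi}(A)$; compatibility with the relations then reduces to the quasi hom-action axioms and the compatibility identity (1). Conditions (2) and (4) of Definition \ref{hom-LR} (the $\phi$-semilinearity of $\alpha_{L\ast M}$ and of $\rho_{L\ast M}$ over $A$) hold by the way these maps were extended from generators.

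Next I would verify that $(L\ast M,[-,-]_{L\ast M},\alpha_{L\ast M})$ is a hom-Lie algebra. Skew-symmetry is immediate from relation (5): $[a(x\ast m),b(y\ast n)]=-a(\prescript{m}{}{x})\ast b(\prescript{y}{}{n})=b(\prescript{n}{}{y})\ast a(\prescript{x}{}{m})=-[b(y\ast n),a(x\ast m)]$. Multiplicativity, $\alpha_{L\ast M}[z,w]=[\alpha_{L\ast M}(z),\alpha_{L\ast M}(w)]$, is a direct computation on generators from the bracket formula and the $\alpha$-equivariance of the quasi hom-actions. The hom-Jacobi identity is obtained by expanding $[\alpha_{L\ast M}(u),[v,w]_{L\ast M}]_{L\ast M}$ and its cyclic permutations on generators, then collapsing the result — via the relations of $L\ast M$ and the compatibility conditions — onto the hom-Jacobi identities already available in $(L,[-,-]_L,\alpha_L)$ and $(M,[-,-]_M,\alpha_M)$. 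It then remains to check conditions (3) and (5) of Definition \ref{hom-LR}. For (3), that $(\rho_{L\ast M},\phi)$ is a representation of the hom-Lie algebra on $A$, one rewrites the two representation identities using $\rho_{L\ast M}(x\ast m)(a)=\prescript{x}{}{m}(a)$; they become consequences of quasi hom-action axiom (4). For (5), one expands $[z,a.w]_{L\ast M}$ on generators using relation (6) and compares with $\phi(a)[z,w]_{L\ast M}+\rho_{L\ast M}(z)(a)\,\alpha_{L\ast M}(w)$ computed from the bracket and anchor formulas; the two agree after invoking quasi hom-action axiom (1).

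The hard part is entirely bookkeeping: the well-definedness check against relation (6) and the hom-Jacobi verification each involve several symbol/anchor terms and a careful sign count, and both draw on all of the compatibility conditions simultaneously. Once those two computations are in place, the remaining axioms fall out by routine substitution of the definitions of the quasi hom-actions and of the presentation of $L\ast M$.
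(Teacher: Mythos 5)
Your outline is sound and agrees with the paper's (implicit) treatment: the paper states Proposition \ref{NON-AB} without any proof, regarding it as a direct verification of the axioms of Definition \ref{hom-LR} on the presented $A$-module $L\ast M$, which is precisely the route you describe. Your identification of the key points — well-definedness of the three structure maps against the defining relations (with relation (6) as the delicate one), skew-symmetry from relation (5), the representation condition from the quasi hom-action and compatibility axioms, and condition (5) of Definition \ref{hom-LR} from relation (6) together with the vanishing of $\phi$-derivations on $1$ — is accurate.
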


\begin{Def}\label{Def-Non-abelian}
Let $(\mathcal{L},\alpha_L)$, and $ (\mathcal{M},\alpha_M)$ be hom-Lie-Rinehart algebras over $(A,\phi)$, which have compatible quasi hom-actions on each-other. Then the hom-Lie-Rinehart algebra $(\mathcal{L}\ast \mathcal{M},\alpha_{L\ast M})$, given by Proposition \ref{NON-AB}, is called the {\bf non-abelian tensor product} of hom-Lie-Rinehart algebras $(\mathcal{L},\alpha_L)$, and $(\mathcal{M},\alpha_M)$.

\end{Def}

In particular, if $A=R$, and $\phi=Id_A$, the above definition gives the non-abelian tensor product of hom-Lie algebras $(L,[-,-]_L,\alpha_L)$ and $(M,[-,-]_M,\alpha_M)$, defined in \cite{CKP17}.

\begin{Rem}\label{universal-property}
Note that the map $f:L\times M\rightarrow L\ast M$ defined by $f(x,m)=x\ast m$ is a hom-Lie-Rinehart pairing. Let $(\mathcal{N},\alpha_N)$ be a hom-Lie-Rinehart algebra and $g:L\times M\rightarrow N$ be a hom-Lie-Rinehart pairing then define a map
$$\Phi:(\mathcal{L}\ast \mathcal{M},\alpha_{L\ast M})\rightarrow (\mathcal{N},\alpha_N)$$ 
by $\Phi(x\ast m)=g(x,m)$, then $\Phi$ is a hom-Lie-Rinehart algebra homomorphism since $g$ is a hom-Lie-Rinehart pairing. Also, by definition of the map $\Phi$, we have $\Phi\circ f=g$. Therefore, the pairing $f:L\times M\rightarrow L\ast M$ is a universal hom-Lie-Rinehart pairing. 
\end{Rem}

\begin{Rem}\label{symmetry}
Consider the hom-Lie-Rinehart algebra $(\mathcal{M}\ast\mathcal{L},\alpha_{M\ast L})$. Define a map $g:L\times M\rightarrow M\ast L$ by $g(x,m)=m\ast x$. It easily follows that $f$ is a universal hom-Lie-Rinehart pairing. Hence, by using the universal property of hom-Lie-Rinehart pairing and Remark \ref{universal-property}, we have an isomorphism of hom-Lie-Rinehart algebras:
$$(\mathcal{L}\ast\mathcal{M},\alpha_{L\ast M})\cong(\mathcal{M}\ast\mathcal{L},\alpha_{M\ast L})$$
\end{Rem}

Now, we have the following result, which easily follows from the Definition \ref{Def-Non-abelian} of non-abelian tensor product:
\begin{Prop}\label{HOM-MORPH}
There exist hom-Lie-Rinehart algebra homomorphisms
$$\pi_1:(\mathcal{L}\ast \mathcal{M},\alpha_{L\ast M})\rightarrow (\mathcal{L},\alpha_L),~~\mbox{and}~~ \pi_2:(\mathcal{L}\ast \mathcal{M},\alpha_{L\ast M})\rightarrow (\mathcal{M},\alpha_M)$$
defined by $\pi_1(a.(x\ast m))=-a(\prescript{m}{}{x})$ and $\pi_2(a.(x\ast m))=a(\prescript{x}{}{m})$, for any $x\in L, ~m\in M,$ and $a\in A$. Moreover, $ker(\pi_1)\subset Z_A(\mathcal{L}\ast \mathcal{M})$ and $ker(\pi_2)\subset Z_A(\mathcal{L}\ast \mathcal{M})$.
\end{Prop}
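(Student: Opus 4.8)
The plan is to verify directly that the two maps $\pi_1$ and $\pi_2$ respect all five axioms of a homomorphism in $hLR_A^{\phi}$ and then to read off the centrality of the kernels from the defining relations of $L\ast M$. First I would check that $\pi_1$ and $\pi_2$ are well-defined $A$-linear maps on $L\ast M$: this amounts to showing that the six defining relations of the $A$-module $L\ast M$ (listed just before Proposition \ref{NON-AB}) are sent to valid identities in $L$ (respectively $M$) under the assignment $x\ast m\mapsto -\prescript{m}{}{x}$ (respectively $x\ast m\mapsto \prescript{x}{}{m}$). For $\pi_2$, relation (3) becomes axiom (2) of a quasi hom-action of $\mathcal{L}$ on $\mathcal{M}$, relation (4) becomes axiom (3) of the quasi hom-action of $\mathcal{M}$ on $\mathcal{L}$ together with compatibility (2), relation (5) is compatibility (2)+(3), and relation (6) is a rewriting using axiom (1) of the action plus compatibility (1); relations (1)--(2) are the obvious bilinearity. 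For $\pi_1$ the roles of the two actions are swapped and one uses the compatibility conditions to translate. I would present only the nontrivial cases and indicate that the rest are symmetric.

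Next I would verify the homomorphism axioms (1)--(5) for $\pi_1$ (and note $\pi_2$ is symmetric, or follows from Remark \ref{symmetry}). Axiom (1), $A$-linearity, is built into the definition $\pi_1(a.(x\ast m))=-a(\prescript{m}{}{x})$. Axiom (3), commuting with the endomorphisms, follows from $\alpha_{L\ast M}(x\ast m)=\alpha_L(x)\ast\alpha_M(m)$ and axiom (5) of the quasi hom-action $\alpha_L(\prescript{m}{}{x})=\prescript{\alpha_M(m)}{}{\alpha_L(x)}$. Axiom (5), compatibility of anchors, uses $\rho_{L\ast M}(x\ast m)(a)=\prescript{x}{}{m}(a)$ and compatibility (1), $\prescript{x}{}{m}(a)=-\prescript{m}{}{x}(a)=\rho_L(\pi_1(x\ast m))(a)$ up to the definition of the anchor of $\mathcal{L}$ on $A$; here I should be slightly careful since $\rho_L(\prescript{m}{}{x})(a)=\prescript{m}{}{x}(a)$ is exactly the way the anchor acts, so this is immediate. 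Axiom (2), compatibility with brackets, is the main computation: using $[a(x\ast m),b(y\ast n)]=-a(\prescript{m}{}{x})\ast b(\prescript{y}{}{n})$ one gets $\pi_1([a(x\ast m),b(y\ast n)])=-\pi_1(a(\prescript{m}{}{x})\ast b(\prescript{y}{}{n}))= ab\,\prescript{b(\prescript{y}{}{n})}{}{(a(\prescript{m}{}{x}))}$, and one must match this against $[\pi_1(a(x\ast m)),\pi_1(b(y\ast n))]_L=[-a(\prescript{m}{}{x}),-b(\prescript{y}{}{n})]_L$, which is forced by compatibility (2) together with axiom (1) of the action and $A$-(anti)linearity of the bracket; I expect this to require relation (6) of $L\ast M$ to absorb the anchor-correction terms $\phi(a)\,\prescript{x}{}{m}(b)$ and $\phi(b)\,\prescript{y}{}{n}(a)$. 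Axiom (4) is the routine check $g(\phi(a))$-type identity, which I would state follows from relation (6) combined with compatibility.

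Finally, for the centrality statements, let $\xi\in\ker(\pi_1)$. By Proposition \ref{NON-AB} the bracket on $L\ast M$ is $[a(x\ast m),b(y\ast n)]=-a(\prescript{m}{}{x})\ast b(\prescript{y}{}{n})$, so for any generator $b(y\ast n)$ I would compute $[\xi, b(y\ast n)]$ by writing $\xi$ as an $A$-combination of elementary tensors and using relation (6) to re-express each term $a(\prescript{m}{}{x})\ast b(\prescript{y}{}{n})$ as $\phi(ab)\big(\prescript{m}{}{x}\ast \prescript{y}{}{n}\big)$ plus two terms proportional to $\alpha_L(x)\ast\alpha_M(m)$ and $\alpha_L(y)\ast\alpha_M(n)$ with coefficients $\prescript{x}{}{m}(b)=(\pi_2\xi\text{-type data})$ and $\prescript{y}{}{n}(a)$; since $\pi_1(\xi)=0$ forces the $\prescript{m}{}{x}$-data to vanish, these terms collapse and $[\xi,-]=0$ on generators, hence on all of $L\ast M$. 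The same argument with the second slot handles $[a.\xi',\cdot]$ and $[a.\alpha_{L\ast M}(\xi'),\cdot]$ with $\xi'\in\ker(\pi_1)$, and $\rho_{L\ast M}(\xi)(a)=\prescript{x}{}{m}(a)=-\prescript{m}{}{x}(a)=-\rho_L(\pi_1\xi)(a)=0$ gives the anchor-vanishing clause in the definition of $Z_A$. Thus $\ker(\pi_1)\subset Z_A(\mathcal{L}\ast\mathcal{M})$, and $\ker(\pi_2)\subset Z_A(\mathcal{L}\ast\mathcal{M})$ follows symmetrically (or via Remark \ref{symmetry}). The main obstacle is the bracket-compatibility axiom (2) together with the centrality computation: both hinge on correctly bookkeeping the anchor-correction terms in relation (6) of the non-abelian tensor product, and this is where I would be most careful to avoid sign errors.
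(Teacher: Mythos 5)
Your overall strategy — direct verification of well-definedness on the six defining relations of $L\ast M$, then the five homomorphism axioms, then centrality of the kernels — is exactly what the paper intends; in fact the paper offers no written proof at all, stating only that the proposition ``easily follows from the Definition,'' so your plan fills in the same outline the authors had in mind, and the well-definedness and homomorphism checks you sketch (relation (3) $\leftrightarrow$ action axiom (2), relation (4) $\leftrightarrow$ action axiom (3) plus compatibility, anchors matching via compatibility (1), $\alpha$-equivariance via action axiom (5)) are the right translations.

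However, the mechanism you propose for the centrality of $\ker(\pi_1)$ does not work as described. If $\xi=\sum_i a_i(x_i\ast m_i)\in\ker(\pi_1)$, expanding each bracket $-\,(a_i\,\prescript{m_i}{}{x_i})\ast(b\,\prescript{y}{}{n})$ by relation (6) produces the terms $\phi(a_ib).(\prescript{m_i}{}{x_i}\ast\prescript{y}{}{n})$ and $\phi(b)\,\prescript{y}{}{n}(a_i).(\alpha_L(x_i)\ast\alpha_M(m_i))$, and neither family ``collapses'' just because $\sum_i a_i\,\prescript{m_i}{}{x_i}=0$: once the coefficients $\phi(a_i)$ sit outside the symbols there is no relation forcing these sums to vanish individually (only the middle terms, involving $\sum_i\phi(a_i)\,\prescript{x_i}{}{m_i}(b)=-\rho_L\big(\sum_i a_i\,\prescript{m_i}{}{x_i}\big)(b)$, die). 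The correct and much shorter argument avoids relation (6) entirely: by the defining formula of the bracket and $R$-linearity of the symbol in its first slot (relation (1)),
\begin{equation*}
[\xi,\,b(y\ast n)]\;=\;-\sum_i (a_i\,\prescript{m_i}{}{x_i})\ast(b\,\prescript{y}{}{n})\;=\;-\Big(\sum_i a_i\,\prescript{m_i}{}{x_i}\Big)\ast(b\,\prescript{y}{}{n})\;=\;\pi_1(\xi)\ast(b\,\prescript{y}{}{n})\;=\;0,
\end{equation*}
i.e.\ the bracket against a fixed generator factors through $\pi_1$. Since $\pi_1$ is $A$-linear and intertwines $\alpha_{L\ast M}$ with $\alpha_L$, the elements $a.\xi$ and $a.\alpha_{L\ast M}(\xi)$ again lie in $\ker(\pi_1)$, so the same computation gives all the bracket conditions in the definition of $Z_A(\mathcal{L}\ast\mathcal{M})$, and $\rho_{L\ast M}(\xi)=\rho_L(\pi_1(\xi))=0$ gives the anchor condition; the statement for $\ker(\pi_2)$ follows symmetrically using second-slot linearity (or Remark \ref{symmetry}). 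With this replacement your argument is complete; as written, the relation-(6) ``collapse'' step is a genuine gap.
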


Let $(\mathcal{L},\alpha_L)$, and $ (\mathcal{M},\alpha_M)$ be hom-Lie-Rinehart algebras over $(A,\phi)$ with trivial quasi hom-actions on each other. If $\alpha_M$ and $\alpha_L$ are surjective maps, then by definition of non-abelian tensor product, for any $x,y\in L$, and $m,n\in M$ we have the following:
$$[x,y]\ast m=0=x\ast [m,n].$$

Subsequently, we have an isomorphism of $A$-modules: $\Phi:L\ast M\rightarrow L^{ab}\otimes M^{ab}$, defined by $\Phi(x\ast m)=x\otimes m$ for any $x\in L, m\in M$.  It follows that the map $\Phi$ gives an isomorphism of hom-Lie-Rinehart algebras:
$$(\mathcal{L}\ast \mathcal{M},\alpha_{L\ast M} )\cong \big(\mathcal{L}^{ab}\otimes \mathcal{M}^{ab},\alpha_{L^{ab}\otimes M^{ab}}\big),$$
where
$\big(\mathcal{L}^{ab}\otimes \mathcal{M}^{ab},\alpha_{L^{ab}\otimes M^{ab}}\big)$ is a hom-Lie-Rinehart algebra with underlying $A$-module: $L^{ab}\otimes M^{ab}$ ($L^{ab}=\frac{L}{[L,L]},$ and $M^{ab}=\frac{M}{[M,M]}$ are abelianizations of hom-Lie algebras), trivial hom-Lie bracket, trivial anchor map, and the map $\alpha_{L^{ab}\otimes M^{ab}}$ is induced by maps $\alpha_L$ and $\alpha_M$. 

Let $\sigma:(\mathcal{L}_1,\alpha_{L_1})\rightarrow (\mathcal{L}_2,\alpha_{L_2})$ and $\tau: (\mathcal{M}_1,\alpha_{M_1})\rightarrow (\mathcal{M}_2,\alpha_{M_2})$ be homomorphisms in the category $hLR_A^{\phi}$. Also assume that hom-Lie-Rinehart algebras $(\mathcal{L}_i,\alpha_{L_i})$
 and $(\mathcal{M}_i,\alpha_{M_i})$ (for $i=1,2$) have compatible quasi hom-actions on each other. Then we say that homomorphisms $\sigma,$ and $\tau$ preserve these quasi hom-actions if $\sigma,\tau$ satisfy the following condition: 
 \begin{equation}\label{pres}
 \sigma(\prescript{m}{}{x})=\prescript{\tau(m)}{}{\sigma(x)},~ \mbox{and}~ \tau(\prescript{x}{}{m})=\prescript{ \sigma(x)}{}{\tau(m)}, ~~\mbox{for}~x\in L_1, m\in M_1.
 \end{equation} 
 
 Next, let $\sigma,$ and $\tau$ preserve these quasi hom-actions. Define an $A$-linear map $\sigma\ast \tau: L_1\ast M_1\rightarrow L_2\ast M_2$, mapping $x\ast m\mapsto \sigma(x)\ast \tau(m)$ for $x\in L_1$, $m\in M_1$. Then, by using equation \eqref{pres} it follows that the map
 $$\sigma\ast \tau:(\mathcal{L}_1\ast \mathcal{M}_1,\alpha_{L_1\ast M_1} )\rightarrow (\mathcal{L}_2\ast \mathcal{M}_2,\alpha_{L_2\ast M_2} )$$
is a homomorphism in the category $hLR_A^{\phi}$. 
\begin{Prop}
Let $(\mathcal{L}_1,\alpha_{L_1})\xrightarrow{f}(\mathcal{L}_2,\alpha_{L_2})\xrightarrow{g}(\mathcal{L}_3,\alpha_{L_3})$ be a short exact sequence in the category $hLR_A^{\phi}$ and $(\mathcal{M},\alpha_M)$ be a hom-Lie-Rinehart algebra over $(A,\phi)$. If for each $i\in \{1,2,3\}$, hom-Lie-Rinehart algebras $(\mathcal{M},\alpha_M)$ and $(\mathcal{L}_i,\alpha_{L_i})$ have compatible quasi hom-actions on each other and morphisms $f$ and $g$ satisfy the following conditions: $f(\prescript{p}{}{x})=\prescript{p}{}{f(x)}, ~\prescript{x}{}{p}=\prescript{f(x)}{}{p},~g(\prescript{p}{}{s})=\prescript{p}{}{g(s)},$ and $\prescript{s}{}{p}=\prescript{g(s)}{}{p}$, then  
$$(\mathcal{L}_1\ast \mathcal{M},\alpha_{L_1\ast M})\xrightarrow{f\ast Id_M}(\mathcal{L}_2\ast \mathcal{M},\alpha_{L_2\ast M})\xrightarrow{g\ast Id_M}(\mathcal{L}_3\ast \mathcal{M},\alpha_{L_3\ast M})$$
is an exact sequence and the map $g\ast Id_M$ is a surjective map.  
\end{Prop}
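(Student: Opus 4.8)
The plan is to prove the two assertions separately, with exactness obtained via the standard quotient argument. First I would observe that $f\ast Id_M$ and $g\ast Id_M$ are genuine morphisms in $hLR_A^{\phi}$: this is precisely the situation of the paragraph preceding the statement, since the hypotheses on $f,g$ say exactly that $f$ and $g$ (and trivially $Id_M$) preserve the given quasi hom-actions. Surjectivity of $g\ast Id_M$ is then immediate, as $L_3\ast M$ is generated as an $A$-module by the symbols $x_3\ast m$, and writing $x_3=g(x_2)$ (possible since $g$ is onto) gives $(g\ast Id_M)(x_2\ast m)=x_3\ast m$. Likewise $Im(f\ast Id_M)\subseteq Ker(g\ast Id_M)$ follows from $(g\ast Id_M)\circ(f\ast Id_M)=(g\circ f)\ast Id_M$, which vanishes because $g\circ f=0$ and $0\ast m=0$ in $L_3\ast M$ by bilinearity. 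Hence $g\ast Id_M$ descends to an $A$-module homomorphism $\overline{g\ast Id_M}\colon \dfrac{L_2\ast M}{Im(f\ast Id_M)}\rightarrow L_3\ast M$, $[x_2\ast m]\mapsto g(x_2)\ast m$, where $[\,\cdot\,]$ denotes the coset modulo $Im(f\ast Id_M)$.

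The core of the argument is to build an inverse of $\overline{g\ast Id_M}$. I would define $\Theta\colon L_3\ast M\rightarrow \dfrac{L_2\ast M}{Im(f\ast Id_M)}$ on generators by $\Theta(x_3\ast m)=[\tilde x\ast m]$, where $\tilde x\in L_2$ is any $g$-preimage of $x_3$. Independence of the choice of $\tilde x$ is the usual remark: two preimages differ by an element of $Ker(g)=Im(f)$, say $f(z)$, so the corresponding symbols differ by $(f\ast Id_M)(z\ast m)\in Im(f\ast Id_M)$. The substance is then to check that $\Theta$ is compatible with the six defining relations of $L_3\ast M$. The bilinearity relations (1)--(2) are immediate. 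For relations (3)--(6) I would use the preservation identities $g(\prescript{p}{}{x})=\prescript{p}{}{g(x)}$ and $\prescript{x}{}{p}=\prescript{g(x)}{}{p}$, together with the facts that $g$ is $A$-linear, commutes with the endomorphisms $\alpha$, respects brackets and intertwines anchors, to see that each term occurring in one of these relations in $L_3\ast M$ has a $g$-preimage equal to the corresponding term of the analogous relation in $L_2\ast M$ --- for instance $[\tilde x,\tilde y]_{L_2}$ is a preimage of $[x_3,y_3]_{L_3}$, one has $\prescript{\tilde y}{}{m}=\prescript{y_3}{}{m}$ as an element of $M$, $\prescript{m}{}{\tilde x}$ is a preimage of $\prescript{m}{}{x_3}$, and the relevant symbols take the same values on elements of $A$. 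Consequently each relation of $L_3\ast M$ is sent by $\Theta$ to the class of a relation that already holds in $L_2\ast M$, hence to $0$; so $\Theta$ extends to a well-defined $A$-module homomorphism.

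Granting this, $\Theta$ and $\overline{g\ast Id_M}$ are mutually inverse on generators --- $\overline{g\ast Id_M}(\Theta(x_3\ast m))=g(\tilde x)\ast m=x_3\ast m$ and $\Theta(\overline{g\ast Id_M}[x_2\ast m])=[x_2\ast m]$ --- so $\overline{g\ast Id_M}$ is an isomorphism; in particular it is injective, i.e.\ $Ker(g\ast Id_M)\subseteq Im(f\ast Id_M)$, and combined with the reverse inclusion this gives exactness at $(\mathcal{L}_2\ast\mathcal{M},\alpha_{L_2\ast M})$. The main obstacle is the bookkeeping in the previous paragraph for relations (5) and (6): there the twists $a(\prescript{m}{}{x})$, $b(\prescript{y}{}{n})$ and the symbol maps $\prescript{x}{}{m}(-)$ appear together, so one must combine $A$-linearity of $g$, compatibility of $g$ with the anchors, and the action-preservation identities all at once; the remaining relations and the rest of the argument are routine.
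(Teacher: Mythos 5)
Your proposal is correct and follows essentially the same route as the paper: after the easy steps (morphisms, surjectivity of $g\ast Id_M$, and $Im(f\ast Id_M)\subseteq Ker(g\ast Id_M)$), both arguments pass to the quotient $(L_2\ast M)/Im(f\ast Id_M)$ and construct a map out of $L_3\ast M$ by choosing $g$-preimages, whose compatibility with the canonical map forces $Ker(g\ast Id_M)\subseteq Im(f\ast Id_M)$. The only difference is packaging: the paper first checks that $Im(f\ast Id_M)$ is an ideal so the quotient is a hom-Lie-Rinehart algebra and then obtains your map $\Theta$ from the universal property of the hom-Lie-Rinehart pairing $(t,m)\mapsto p\ast m+Im(f\ast Id_M)$, whereas you verify the six defining relations of $L_3\ast M$ directly at the $A$-module level --- the same computations in a different wrapper.
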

\begin{proof}
First note that the maps $f\ast Id_M$ and $g\ast Id_M$ are morphisms in $hLR_A^{\phi}$. The map $g$ is surjective implies that $g\ast id_M$ is a surjective map and $Im(f\ast Id_M)\subset Ker(g\ast Id_M)$ since $Im(f)=Ker(g)$. To complete the proof we need to show that $Ker(g\ast Id_M)\subset Im(f\ast Id_M)$.

Now for all $x\in L_1,~ m\in M$ and $a\in A$ we can deduce the following equations. 
\begin{enumerate}
\item $0=x\ast m(a)=\prescript{x}{}{m}(a)=\prescript{f(x)}{}{m}(a)=f(x)\ast m(a)=(f\ast Id_M(x\ast m))(a),$
\item $[f(x)\ast m, y\ast n]=\prescript{m}{}{f(x)}\ast \prescript{y}{}{n}=f(\prescript{m}{}{x})\ast \prescript{y}{}{n}=f\ast Id_M(\prescript{m}{}{x}\ast \prescript{y}{}{n})$.
\end{enumerate} 
 So, $Im(f\ast Id_M)$ is an ideal of the hom-Lie-Rinehart algebra $(\mathcal{L}_2\ast \mathcal{M},\alpha_{L_2\ast M})$. We consider the quotient hom-Lie-Rinehart algebra $(\mathcal{L}_2\ast \mathcal{M}/Im(f\ast Id_M),\bar{\alpha}_{L_2\ast M})$, where the underlying $A$-module is the quotient module $L_2\ast M/Im(f\ast Id_M)$ and $\bar{\alpha}_{L_2\ast M}$ is the induced linear map.
Since $Im(f\ast Id_M)\subset Ker(g\ast Id_M)$, we have canonical surjective homomorphism 
$$\Psi:(\mathcal{L}_2\ast \mathcal{M}/Im(f\ast Id_M),\bar{\alpha}_{L_2\ast M})\rightarrow (\mathcal{L}_3\ast \mathcal{M},\alpha_{L_3\ast M}).$$
Let us define a map 
$\phi:L_3\times M \rightarrow (L_2\ast M)/ Im(f\ast Id_M)$ given by $$\phi(t,m)=p\ast m+Im(f\ast Id_M)$$ for any $t\in L_3,~p\in L_2$ where $g(p)=t$ and $m\in M$. Then it follows that the map $\phi$ is a hom-Lie-Rinehart pairing and hence by universal property of non-abelian tensor product we get a unique morphism in $hLR_A^{\phi}$:
$$\Phi: (\mathcal{L}_3\ast \mathcal{M},\alpha_{L_3\ast M})\rightarrow (\mathcal{L}_2\ast \mathcal{M}/Im(f\ast Id_M),\bar{\alpha}_{L_2\ast M}).$$ 
Here $\Phi \circ \Psi$ is the identity map. This implies that $Ker(g\ast Id_M)\subset Im(f\ast Id_M)$. 

\end{proof}

Let $(\mathcal{L},\alpha_L)$ be a perfect hom-Lie-Rinehart algebra over $(A,\phi)$ and it has a quasi hom-action on itself by the underlying hom-Lie bracket, i.e. $\prescript{x}{}{y}=[x,y]_L$. Then from Proposition \ref{HOM-MORPH}, it follows that the short exact sequence 
\begin{equation*}
(\mathcal{P},\alpha_P)\xrightarrow{i}(\mathcal{L}\ast \mathcal{L},\alpha_{L\ast L})\xrightarrow{\pi} (\mathcal{L},\alpha_L)
\end{equation*}      
is a central extension where $\pi(a(x\ast y))=a[x,y]_L$. The underlying $A$-module $P:=Ker(\pi)$, and the map $\alpha_P=\alpha_{L\ast L}\big|_{P}$ in the hom-Lie-Rinehart algebra $(\mathcal{P},\alpha_P)$. 

\begin{Thm}
The central extension
\begin{equation*}
(\mathcal{P},\alpha_P)\xrightarrow{i}(\mathcal{L}\ast \mathcal{L},\alpha_{L\ast L})\xrightarrow{\pi} (\mathcal{L},\alpha_L)
\end{equation*}      
is a universal central extension of $(\mathcal{L},\alpha_L)$. Moreover, if $(\mathcal{L},\alpha_L)$ is an $\alpha$-perfect hom-Lie-Rinehart algebra, then the above central extension is a universal $\alpha$-central extension of $(\mathcal{L},\alpha_L)$.
\begin{proof}
Let $p:(\mathcal{M},\alpha_M)\rightarrow (\mathcal{L},\alpha_L)$ be a central extension of $(\mathcal{L},\alpha_L)$ and $s:L\rightarrow M$ be a map with $p\circ s=Id_L$. Then we have the following observations.
\begin{enumerate}
\item $s[x,y]_L-[s(x),s(y)]_M\in Ker(p),$
\item $s(a.x)-a.s(x)\in ker(p),$
\item $s(\alpha_L(x))-\alpha_M(s(x))\in Ker(p),$
\item $s(x)(a)=p(s(x))(a)=x(a),$
\end{enumerate}
for any $x,y\in L,~a\in A$. Now, let us define a map $q:L\times L\rightarrow M $ by $q(x,y)=[s(x),s(y)]_{M}$. Then by the above observations and the fact that $ker(p)\subset Z_A(\mathcal{M})$, 
 the map $q:L\times L\rightarrow M$ is a hom-Lie-Rinehart pairing and therefore it extends to a hom-Lie-Rinehart algebra homomorphism $\mathfrak{q}:(\mathcal{L}\ast \mathcal{L},\alpha_L\ast \alpha_L)\rightarrow (\mathcal{M},\alpha_M)$. (Here,  $$\mathfrak{q}(\alpha_{L\ast L}(x\ast y))=q(\alpha_L(x),\alpha_L(y))=\alpha_M(q(x,y))=\alpha_M(\mathfrak{q}(x\ast y))$$
i.e. $\mathfrak{q}\circ \alpha_{L\otimes L}=\alpha_M\circ \mathfrak{q}$). It easily follows from definition of the map $q$ that $p\circ \mathfrak{q}=\pi$. By Lemma \ref{uniq}, it follows that the homomorphism $\mathfrak{q}$ is unique. Hence, the non-abelian tensor product $(\mathcal{L}\ast \mathcal{L},\alpha_{L\ast L})$ is a universal central extension of  $(\mathcal{L},\alpha_L)$.

Next, let $(\mathcal{L},\alpha_L)$ is an $\alpha$-perfect hom-Lie-Rinehart algebra and $$(\mathcal{N},\alpha_N)\xrightarrow{i}(\mathcal{M},\alpha_M)\xrightarrow{p}(\mathcal{L},\alpha_L)$$ be an $\alpha$-central extension of $(\mathcal{L},\alpha_L)$. Consider a map $s:L\rightarrow M$ such that $p\circ s=Id_L$ and define a map $f:L\times L\rightarrow M$ by 
$$f(\alpha_L(x),\alpha_L(y))=[\alpha_M(s(x)),\alpha_M(s(y))]$$
for all $x,y\in L$. It follows that $f$ is a hom-Lie-Rinehart pairing and therefore the map $f$ extends to a homomorphism $\tilde{f}:(\mathcal{L}\ast \mathcal{L},\alpha_{L\ast L})\rightarrow (\mathcal{M},\alpha_M)$. Since $(\mathcal{L},\alpha_L)$ is $\alpha$-perfect, by Lemma \ref{uniq2}, the map $\tilde{f}$ is unique. Hence, the central extension 
\begin{equation*}
(\mathcal{P},\alpha_P)\xrightarrow{i}(\mathcal{L}\ast \mathcal{L},\alpha_{L\ast L})\xrightarrow{\pi} (\mathcal{L},\alpha_L)
\end{equation*}      
is a universal $\alpha$-central extension of $(\mathcal{L},\alpha_L)$.

 

\end{proof}
\end{Thm}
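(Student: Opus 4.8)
The plan is to establish the universal property directly by reducing it to the universal property of the non-abelian tensor product recorded in Remark~\ref{universal-property}. So let $(\mathcal{N},\alpha_N)\xrightarrow{\iota}(\mathcal{M},\alpha_M)\xrightarrow{p}(\mathcal{L},\alpha_L)$ be an arbitrary central extension of $(\mathcal{L},\alpha_L)$. First I would choose an $A$-linear, $\alpha$-equivariant-up-to-center section $s\colon L\to M$ with $p\circ s=\mathrm{Id}_L$ and record the ``defect'' identities
$s[x,y]_L-[s(x),s(y)]_M\in\ker p$, $s(a.x)-a.s(x)\in\ker p$, $s(\alpha_L(x))-\alpha_M(s(x))\in\ker p$, and $s(x)(a)=x(a)$, exactly as in the proof of Theorem~\ref{Existence 1}. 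The crucial structural fact to keep in mind throughout is $\ker p\subseteq Z_A(\mathcal{M},\alpha_M)$, so every such defect is annihilated once it is inserted into a bracket (with an $A$-coefficient) or read through the anchor map.

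Next I would define $q\colon L\times L\to M$ by $q(x,y)=[s(x),s(y)]_M$, with respect to the quasi hom-action of $(\mathcal{L},\alpha_L)$ on itself by the bracket (this action is compatible with itself since $\mathcal{L}$ is a quasi ideal in itself), and verify that $q$ is a hom-Lie-Rinehart pairing. Axiom (1) follows from $\rho_M([s(x),s(y)]_M)(a)=\rho_L([x,y]_L)(a)=\prescript{x}{}{y}(a)$ via the section property; axiom (4) from $s\circ\alpha_L=\alpha_M\circ s$ modulo $Z_A(\mathcal{M})$ and $\alpha_M$ being an algebra morphism on brackets; axioms (2) and (3) are the hom-Jacobi identity in $(M,\alpha_M)$ after replacing $s[x,y]_L$ by $[s(x),s(y)]_M$ modulo the center; and axiom (5) uses the Rinehart relation $[u,a.v]_M=\phi(a)[u,v]_M+\rho_M(u)(a)\alpha_M(v)$ together with $s(a.x)-a.s(x)\in Z_A(\mathcal{M})$ and $\rho_M(s(x))(a)=x(a)$. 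I expect axiom (5) --- the one with the three $\phi$-twisted terms --- to be the main obstacle, since it is the single place where $A$-linearity of $s$, centrality of $\ker p$, and the anchor identity must all be combined correctly; everything else is routine bookkeeping.

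By Remark~\ref{universal-property} the pairing $q$ then extends uniquely to a morphism $\mathfrak{q}\colon(\mathcal{L}\ast\mathcal{L},\alpha_{L\ast L})\to(\mathcal{M},\alpha_M)$ in $hLR_A^{\phi}$ with $\mathfrak{q}(x\ast y)=[s(x),s(y)]_M$; axiom (4) of the pairing gives $\mathfrak{q}\circ\alpha_{L\ast L}=\alpha_M\circ\mathfrak{q}$, and $p\circ s=\mathrm{Id}_L$ yields $p\circ\mathfrak{q}=\pi$ on generators, hence everywhere. For uniqueness I would first observe that $(\mathcal{L}\ast\mathcal{L},\alpha_{L\ast L})$ is again perfect: by perfectness of $(\mathcal{L},\alpha_L)$ together with defining relation (3) of $L\ast L$, the derived $A$-submodule $\{\mathcal{L}\ast\mathcal{L},\mathcal{L}\ast\mathcal{L}\}$ already contains every generator $x\ast y$; then Lemma~\ref{uniq} forces any two factorisations to agree, proving universality. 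Finally, for the $\alpha$-perfect case --- where $\alpha_L$ is surjective because $\phi$ is an epimorphism --- I would run the same argument starting from $f(\alpha_L(x),\alpha_L(y))=[\alpha_M(s(x)),\alpha_M(s(y))]_M$, which is well defined on $\alpha_L(L)\times\alpha_L(L)$ and extends to $\tilde f\colon(\mathcal{L}\ast\mathcal{L},\alpha_{L\ast L})\to(\mathcal{M},\alpha_M)$ for any $\alpha$-central extension $(\mathcal{M},\alpha_M)\xrightarrow{p}(\mathcal{L},\alpha_L)$, and replace Lemma~\ref{uniq} by Lemma~\ref{uniq2} to obtain uniqueness; combined with the already-established universality for plain central extensions, this shows the sequence is a universal $\alpha$-central extension of $(\mathcal{L},\alpha_L)$.
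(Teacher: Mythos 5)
Your proposal follows essentially the same route as the paper's own proof: choose a set-theoretic section $s$ of the given (respectively $\alpha$-)central extension, record the same defect identities, check that $q(x,y)=[s(x),s(y)]_M$ (respectively $f(\alpha_L(x),\alpha_L(y))=[\alpha_M(s(x)),\alpha_M(s(y))]_M$) is a hom-Lie-Rinehart pairing using centrality of $\ker p$, extend via the universal property of the non-abelian tensor product, and obtain uniqueness from Lemma~\ref{uniq} (respectively Lemma~\ref{uniq2}). Your added remark that $(\mathcal{L}\ast\mathcal{L},\alpha_{L\ast L})$ is perfect, which is what makes Lemma~\ref{uniq} applicable, is a point the paper leaves implicit, but otherwise the argument coincides with the published one.
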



\begin{thebibliography}{CKPR17}

\bibitem[AEM11]{AEM11}
Faouzi Ammar, Zeyneb Ejbehi, and Abdenacer Makhlouf.
\newblock Cohomology and deformations of {H}om-algebras.
\newblock {\em J. Lie Theory}, 21(4):813--836, 2011.

\bibitem[CGML]{CGML14}
Jose~Luis. Castiglioni, Xabier. Garcia-Martinez, and J.~M. Ladra.
\newblock Universal central extensions of lie-rinehart algebras.
\newblock {\em arXiv:1403.7159v1 [math.RA]}.

\bibitem[CIP15]{CIP15}
J.~M. Casas, M.~A. Insua, and N.~Pacheco.
\newblock On universal central extensions of {H}om-{L}ie algebras.
\newblock {\em Hacet. J. Math. Stat.}, 44(2):277--288, 2015.

\bibitem[CKP]{CKP16}
J.~M. Casas, E.~Khmaladze, and N.~Pacheco.
\newblock A non-abelian hom-leibniz tensor product and applications.
\newblock {\em arXiv:1601.07827v1 [math.RA]}.

\bibitem[CKPR17]{CKP17}
J.~M. Casas, E.~Khmaladze, and N.~Pacheco~Rego.
\newblock A non-abelian tensor product of {H}om-{L}ie algebras.
\newblock {\em Bull. Malays. Math. Sci. Soc.}, 40(3):1035--1054, 2017.

\bibitem[Ell91]{Ellis91}
Graham~J. Ellis.
\newblock A nonabelian tensor product of {L}ie algebras.
\newblock {\em Glasgow Math. J.}, 33(1):101--120, 1991.

\bibitem[Gar80]{Gar80}
Howard Garland.
\newblock The arithmetic theory of loop groups.
\newblock {\em Inst. Hautes \'Etudes Sci. Publ. Math.}, (52):5--136, 1980.

\bibitem[Gne99]{Gne99}
Allahtan~V. Gnedbaye.
\newblock A non-abelian tensor product of {L}eibniz algebras.
\newblock {\em Ann. Inst. Fourier (Grenoble)}, 49(4):1149--1177, 1999.

\bibitem[HLS06]{HLS06}
Jonas~T. Hartwig, Daniel Larsson, and Sergei~D. Silvestrov.
\newblock Deformations of {L}ie algebras using {$\sigma$}-derivations.
\newblock {\em J. Algebra}, 295(2):314--361, 2006.

\bibitem[HS71]{HilSta71}
Peter~John Hilton and Urs Stammbach.
\newblock {\em A course in homological algebra}.
\newblock Springer-Verlag, New York-Berlin, 1971.
\newblock Graduate Texts in Mathematics, Vol. 4.

\bibitem[LGT13]{LGT13}
Camille Laurent-Gengoux and Joana Teles.
\newblock Hom-{L}ie algebroids.
\newblock {\em J. Geom. Phys.}, 68:69--75, 2013.

\bibitem[MM17]{MM2017}
Ashis Mandal and Satyendra~Kumar Mishra.
\newblock On {H}om-{G}erstenhaber algebras and {H}om-{L}ie algebroids.
\newblock {\em arXiv:1707.08891[math.KT]}, 2017.

\bibitem[MM18]{MM2018}
Ashis Mandal and Satyendra~Kumar Mishra.
\newblock {H}om-{L}ie-rinehart algebras.
\newblock {\em To appear in Communications in Algebra}, 2018.

\bibitem[MS08]{MS08}
Abdenacer Makhlouf and Sergei~D. Silvestrov.
\newblock Hom-algebra structures.
\newblock {\em J. Gen. Lie Theory Appl.}, 2(2):51--64, 2008.

\bibitem[MS10]{MS10}
Abdenacer Makhlouf and Sergei Silvestrov.
\newblock Notes on 1-parameter formal deformations of {H}om-associative and
  {H}om-{L}ie algebras.
\newblock {\em Forum Math.}, 22(4):715--739, 2010.

\bibitem[She12]{Sheng12}
Yunhe Sheng.
\newblock Representations of hom-{L}ie algebras.
\newblock {\em Algebr. Represent. Theory}, 15(6):1081--1098, 2012.

\bibitem[vdK73]{Kallen}
W.~L.~J. van~der Kallen.
\newblock {\em Infinitesimally central extensions of {C}hevalley groups}.
\newblock Lecture Notes in Mathematics, Vol. 356. Springer-Verlag, Berlin-New
  York, 1973.

\bibitem[Yau09]{Yau09}
Donald Yau.
\newblock Hom-algebras and homology.
\newblock {\em J. Lie Theory}, 19(2):409--421, 2009.

\end{thebibliography}
\end{document}